\newtheorem{Theorem}{Theorem}[part]
\newtheorem{Definition}{Definition}[part]
\newtheorem{Proposition}{Proposition}[part]
\newtheorem{Assumption}{Assumption}[part]
\newtheorem{Lemma}{Lemma}[part]
\newtheorem{Remark}{Remark}[part]
\def \N{\mathbb{N}}
\def \R{\mathbb{R}}
\def \E{\mathbb{E}}
\def \F{\mathbb{F}}
\def \G{\mathbb{G}}
\def \d{\mathbf{d}}
\def \f{\mathbf{f}}
\def \x{\mathbf{x}}
\def \y{\mathbf{y}}
\def \P{\mathbb{P}}
\def \Q{\mathbb{Q}}
\def \D{\mathbb{D}}
\def \G{\mathbb{G}}
\def \1{\mathds{1}}
\def \0{\mathds{O}}
\def \Ac{\mathcal{A}}
\def \Bc{\mathcal{B}}
\def \Cc{\mathcal{C}}
\def \Dc{\mathcal{D}}
\def \Ec{\mathcal{E}}
\def \Fc{\mathcal{F}}
\def \Gc{\mathcal{G}}
\def \Kc{\mathcal{K}}
\def \Lc{\mathcal{L}}
\def \Mc{\mathcal{M}}
\def \Nc{\mathcal{N}}
\def \Pc{\mathcal{P}}
\def \Rc{\mathcal{R}}
\def \Lb{\textbf{L}}
\def \Db{\textbf{D}}
\def \Ib{\mathbf{I}}
\def \l{\left}
\def \r{\right}
\newcommand{\restr}[2]{#1_{\mkern 1mu \vrule height 2ex\mkern2mu #2}}
\def\eps{\varepsilon}
\def\beqs{\begin{eqnarray*}}
\def\enqs{\end{eqnarray*}}
\def\beq{\begin{eqnarray}}
\def\enq{\end{eqnarray}}
\providecommand{\keywords}[1]
{
  \small
  \textbf{Keywords:---} #1
}
\providecommand{\MSC}[1]
{
  \small
  \textbf{MSC Classification:--- } #1
}
\title{Controlled superprocesses and HJB equation in the space of finite measures}
\author{Antonio Ocello\\  CMAP, UMR CNRS 7146,\\ École Polytechnique,
\\ antonio.ocello @ polytechnique.edu
}
\begin{document}

\maketitle

\begin{abstract}
    This paper introduces the formalism required to analyze a certain class of stochastic control problems that involve a super diffusion as the underlying controlled system. To establish the existence of these processes, we show that they are weak scaling limits of controlled branching processes. First, we prove a generalized Itô's formula for this dynamics in the space of finite measures, using the differentiation in the space of finite positive measures. This lays the groundwork for a PDE characterization of the value function of a control problem, which leads to a verification theorem. Finally, focusing on an exponential-type value function, we show how a regular solution to a finite--dimensional HJB equation can be used to construct a smooth solution to the HJB equation in the space of finite measures, via the so-called branching property technique.
\end{abstract}

\noindent \MSC{93E20, 49L20, 49L12, 60J70, 35K10, 35Q93, 60H30}

\noindent \keywords{
  Stochastic control, superprocesses, {H}amilton--{J}acobi--{B}ellman equation, dynamic programming principle
}

\section{Introduction}

Superprocesses are a class of stochastic processes that have garnered significant interest in recent years, particularly due to their applications in various fields such as biology and finance.
In population dynamics, they have been introduced as limit approximation for large populations, of branching diffusions, processes used to model the evolution of a population that is dispersed in space \citep[see, $e.g.$,][]{Dawson,Etheridge,Etheridge:Spatial-population-models,li2011measure,Roelly:convergence_measure_val_processes}. Their study has been extensively developed to incorporate increasingly sophisticated model specifications, such as mean-field interactions and varying impacts on model parameters \citep[see, $e.g.$,][]{Perkins,etheridge2004survival,etheridge2023looking,meleard2015stochastic,evans2007damage}.

In \citet{cuchiero2024measure,svaluto2018probability}, these processes have been incorporated into the general framework of measure-valued polynomial processes, a versatile class of stochastic processes known for their tractability and wide applicability in modeling complex systems. In particular, \citet{guida2022measure,cuchiero2022measure} used them for its ability to incorporate stylized facts observed in energy markets.
Additionally, in \citet{cuchiero2019probability}, they focus on the class of probability measure-valued processes, where the Fleming-Viot process serves as a specific example, highlighting the close connections with superprocesses \citep[see, $e.g.$,][]{Dawson,Etheridge}.

The purpose of this article is to introduce and examine the class of \textit{controlled superprocesses}. As highlighted in \citet{etheridge2004survival}, understanding the impact of parameters within this class is crucial for evaluating their effectiveness in critical issues such as survival and avoiding finite-time explosion, which can be approached from an optimization perspective. This class seeks to bridge the gap between spatial optimization at a microscopic level and a broader understanding, linking possible spatial constraints, resource optimization, and general interaction on a global perspective.

This new framework extends classical superprocesses, when the controls are taken to be constant, and they act a proxy for large critical controlled branching diffusion processes, which have recently garnered attention in the literature.
Studied by \citet{Ustunel,Nisio,claisse18,kharroubi2024stochastic}, we are going to build on the description in \citet{Ocello:rel_form_branching}. This relaxed (and weak) control approach, introduced in \citet{ElK:Jeanblanc:Compactification,Haussmann-Existence_Optimal_Controls}, enables us to focus on the laws of these processes associated with starting condition, control, and martingale problem. Following the lines of \citet{Lacker:MFgames:2015,Lacker:limit_theory:2016}, by fixing the first two elements and manipulating the martingale problem, we prove that controlled superprocesses arise as a rescaling of branching processes, thereby establishing their existence.
An example of a study that combines the branching diffusion framework with the mean--field approach is also present in \citet{Claisse:Tan:MFbranching}, where the authors introduce scaling limits that differ from the dynamics of superprocesses.
Moreover, a similar weak reasoning method is also used in \citet{Talbi:Touzi:DPE_MF_Optimal_stopping}, where the optimal stopping problem is generalized to the mean field setting using a control stopping strategy.

We then introduce, to the best of our knowledge, a novel formulation of a weak version of Itô's formula specifically designed for superprocesses. This development is inspired by the methodologies of \citet{martini2023kolmogorov,guo2023ito,cox2024controlled}, where Itô’s formula is initially applied to cylindrical functions and subsequently extended to sufficiently differentiable functions using the denseness of the former class. This approach is advantageous because it relies solely on weak versions of measure flows and it contrasts with methods that use a strong formulation approach \citep[see, $e.g.$,][]{buckdahn2017mean,li2018mean}, the density with respect to a reference measure for the process law \citep{cardaliaguet2019master}, or particle approximation techniques \citep{book:Carmona-Delarue_1,chassagneux2014probabilistic,reis2021relation,dos2023ito}.

Using the Itô's formula, we address a general control problem and establish its well-posedness. This leads to the identification of the value function as a solution to a Hamilton--Jacobi--Bellman (HJB) equation in the space of finite measures and a verification theorem for “classical” solutions, which provides the necessary conditions for proving the optimality of a controlling strategy. This is achieved by showing that the value function of the control problem is a solution of this equation. This point of view has been explored in several works, including those focused on Markovian controls \citet{pham2018bellman}, open-loop controls \citet{bayraktar2018randomized}, Markovian and non-Markovian frameworks \citet{Djete:Possamai:Tan:DPP}, closed-loop controls \citet{Wu:Zhang:ViscosityMFC_closed_loop}, and McKean-Vlasov mixed regular-singular control problems \citet{guo2023ito}.

By employing cost functions that align with the geometry of the dynamics under study, we then provide for an example that effectively utilize the so-called \textit{branching property} technique. This approach has been successfully applied in the literature on controlled branching processes \citep[see, $e.g.$,][]{Nisio,claisse18,kharroubi2024stochastic,kharroubi2024optimal}, where the complex global problem is simplified into a finite-dimensional optimization. This strategy is similar to the approach used in linear--quadratic problems in stochastic control, where solving the initial problem is reformulated into finding specific solutions to a Riccati PDE \citep[see, $e.g.$,][]{abi2021linear,basei2019weak,lefebvre2021linear}.

This work complements the ongoing focus on measure-valued processes in the context of their integration with control theory,  where they have been applied to problems such as mean-field games and mean-field control \citep{Huang:Malhame:Caines:1,Huang:Malhame:Caines:2,huang2007invariance,Huang:Malhame:Caines:4,lasry2006jeux,lasry2006jeux-II,lasry2007mean,Cardaliaguet:NotesMFG,book:Carmona-Delarue_1,book:Carmona-Delarue_2},  controlled measure-valued martingales \citep{cox2024controlled}, and the Zakai equation from filtering theory \citep{martini2023kolmogorov,martini2022kolmogorov}.
While the previous studies focus on different classes of processes with varying state spaces and dynamics, our work builds upon and extends these foundations. The processes considered here, including their state spaces, formulation, and dynamics, are distinct yet complementary to those explored in previous research. This is also evidenced by the differing Itô's formulas provided, which highlight the unique aspects of our approach while integrating and extending the insights from earlier investigations.

The paper is organized as follows: In Section \ref{Section:Setting}, we introduce the model setup, and in Section \ref{Section:def-existence}, we present the controlled superprocesses as a solution to a martingale problem as well as their rescaled counterpart, the class of controlled $n$-rescaled branching diffusion processes. We conclude this section by proving their uniqueness in law and existence as a weak limit of rescaled branching processes.
Section \ref{Section:Ito} focuses on introducing the differential calculus in the space of finite measures and provide the weak version of Itô's formula for the considered dynamics, generalizing the initial martingale problem using the denseness of cylindrical functions in the space of regular functions on finite measures. Finally, Section \ref{Section:ctrl_pb} is centered on the study of the control problem. After establishing a non-explosion bound, we use the Dynamic Programming Principle (DPP) techniques to derive the HJB equation satisfied by the value function of the control problem and prove a verification theorem. We conclude the paper by providing a regular solution to the optimization problem. In the appendix we collect the technical details of the paper. In particular, in \ref{Appendix:proofs} we gather the proofs of the paper's main results, in \ref{Appendix:denseness-cyl-fct} we discuss the denseness properties of cylindrical functions, used to provide Itô's formula, in \ref{Appendix:moment-estimate} we give the moment estimates relying on a martingale measure representation, and in \ref{Appendix:DPP} we give the prove the DPP for our control problem.

\section{Model setup}\label{Section:Setting}

For a Polish space $\l(E,d\r)$ with $\Bc\l(E\r)$ its Borelian $\sigma$-field, we write $C_b\l(E\r)$ (resp. $C_0\l(E\r)$) for the subset of the continuous functions that are bounded (resp. that vanish at infinity), and $\Mc\l(E\r)$ (resp. $\Pc\l(E\r)$) for the set of Borel positive finite measures (resp. probability measures) on $E$. We equip $\Mc\l(E\r)$ with weak* topology, $i.e.$, the weakest topology that makes continuous the maps $\Mc\l(E\r)\ni\lambda\mapsto\int_E \varphi(x) \lambda(dx)$ for any $\varphi\in C_b\l(E\r)$. We denote $\langle \varphi,\lambda\rangle = \int_E \varphi(x) \lambda(dx)$ for $\lambda\in \Mc\l(E\r)$ and $\varphi\in C_b\l(E\r)$.

A family $\mathscr{F}\subseteq C_b\l(E\r)$ is said to be \textit{separating} if, whenever $\langle \varphi,\lambda\rangle=\langle \varphi,\lambda'\rangle$ for all $\varphi\in \mathscr{F}$, and some $\lambda, \lambda'\in \Mc\l(E\r)$, we necessarily have $\lambda=\lambda'$. Since $E$ is Polish, from the Portmanteau theorem \citep[see, $e.g.$, Theorem 1.1.1,][]{SV97}, the set of uniformly continuous functions, for any metric equivalent to $d$, is separating. From Tychonoff's embedding theorem \citep[see, $e.g.$, Theorem 17.8,][]{General:Topology}, $C_b\l(E\r)$ is also separable.
Therefore, there exists a countable and separating family $\mathscr{F}_E = \left\{\varphi_k, k \in\N\right\}$ subset of $C_b\l(E\r)$ such that the function $E\ni x\mapsto 1$ belongs to $\mathscr{F}_E$ and $\|\varphi_k\|_\infty := \sup_E|\varphi_k| \leq 1$ for all $k \in\N$. We use this setting to define the following distance
\beqs
\d_E(\lambda,\lambda') = \sum_{\varphi_k\in\mathscr{F}_E}\frac{1}{2^k}\left|\langle \varphi_k,\lambda\rangle-\langle \varphi_k,\lambda'\rangle\right|,
\enqs
for $\lambda,\lambda'\in \Mc\l(E\r)$.
As in \citet[Theorem 1.1.2,][]{SV97}, this distance $\d_E$ induces on $\Mc\l(E\r)$ the weak* topology. Whenever $E=\R^d$, we adjust this metric to take into account useful differential properties. Let $\mathscr{F}_{\R^d}$ be taken as a subset of $C^2_b\l(\R^d\r)$, the set of real functions with bounded, continuous derivatives over $\R^d$ up to order two. We can take this set as separating since $C^2$ is dense in $C^0$ for local uniform convergence \citep[application of Theorem 8.14,][]{Folland:RealAnalysis}. We define the distance
\beq\label{Superproceq:def_distance_measure}
\d_{\R^d}(\lambda,\lambda') = \sum_{\varphi_k\in\mathscr{F}_{\R^d}}\frac{1}{2^k q_k}\left|\langle \varphi_k,\lambda\rangle-\langle \varphi_k,\lambda'\rangle\right|,
\enq
with $q_k=\max\{1,\|D \varphi_k\|_\infty, \|D^2\varphi_k\|_\infty\}$, and $D$ and $D^2$ denote gradient and Hessian.

\paragraph{Atomic measures}
We write $\Nc^n[E]$ for the space of atomic measures in $E$ where each atom has a mass multiple of $1/n$, $i.e.$,
\beqs
\Nc^n[E] := \left\{\frac{k_i}{n}\sum_{i\in V}\delta_{x_i} ~:~ k_i\in\N,x_i\in E \text{ for }i\in V,~ V\subseteq \N,~|V|<\infty \right\},
\enqs
where $|V|$ is the cardinal of the set $V$. For $n\geq 1$, $\Nc^n[E]$ is a weakly* closed subset of $\Mc\l(E\r)$. If $E$ is a Polish space, $e.g.$ a Euclidean space, $\cup_{n\in\N}\Nc^n[E]$ is dense in $\Mc\l(E\r)$. First, the result is shown for probability measures.
By the fundamental theorem of simulation \citep[see, $e.g.$, Theorem 1.2,][]{pages2018numerical}, there exists a Borel function $\varphi_\lambda : [0,1] \to E$, for any $\lambda\in \Pc\l(E\r)$, such that $\lambda = \restr{\text{Leb}}{[0,1]}\circ \varphi_\lambda^{-1}$,
where $\restr{\text{Leb}}{[0,1]}\circ \varphi_\lambda^{-1}$ denotes the image measure by $\varphi_\lambda^{-1}$ of the Lebesgue measure on the unit interval. With Glivenko–Cantelli theorem  \citep[see, $e.g.$, Theorem 4.1,][]{pages2018numerical}, we approximate the Lebesgue measure on the unit interval by probability measures $\lambda_n\in \Nc^n[E]$. We get the final result decomposing each finite measure $\lambda$ as a probability measure times its total mass $\lambda([0,1])$ and using for the latter the approximation $\lfloor n \lambda\l(E\r)\rfloor / n$, where $\lfloor \cdot \rfloor$ denotes the integer part of a real number.

\paragraph{State space}
Fix a finite time horizon $T > 0$. Let $\Db^d=\D\l(\l[0,T\r];\Mc\l(\R^d\r)\r)$ be the set of càdlàg functions (right continuous with left limits) from $[0, T]$ to $\Mc\l(\R^d\r)$. We endow this space with Skorohod metric $d_{\Db^d}$ associated with the metric $\d_{\R^d}$, which makes it complete
\citep[see, $e.g.$, Theorem 12.2,][]{book:Billingsley}.
For $\P\in \Pc\l(\Db^d\r)$, $\P_t\in \Pc\l(\Mc\l(\R^d\r)\r)$ denotes the time-$t$ marginal of $\P$, $i.e.$, the image of $\P$ under the map $\Db^d\ni \mu \mapsto \mu_t\in \Mc\l(\R^d\r)$. Denote $\Db^{n,d}=\D(\l[0,T\r];\Nc^n\l[\R^d\r])$, a closed subset of $\Db^d$.

We consider the canonical space $\Db^d$, with $\mu$ its canonical process, and $\F^{\mu}=\left\{\Fc_s^{\mu}\right\}_s$ the filtration generated by $\mu$. 
Let $A$ be the set of actions and assume it is a compact subset of $\R^m$. Denote by $\Ac$ the set of $\left\{\Bc\l(\R^d\r)\otimes\Fc^\mu_s\right\}_s$-predictable processes from $\l[0,T\r]\times \R^d$ to $A$, $i.e.$, the set of processes $\alpha$ measurable w.r.t. a $\sigma$-algebra on $\R^d\times\Db^d\times\l[0,T\r]$ generated by all $\left\{\Bc\l(\R^d\r)\otimes\Fc^{\mu}_s\right\}_s$-adapted left-continuous processes when viewed as a mapping $\alpha:\R^d\times\Db^d\times\l[0,T\r] \to\l[0,T\r]$ \citep[see, $e.g.$, Chapter IV.5,][]{revuz2013continuous}.

\paragraph{Model parameters}
Given $d,d'\in\N$, consider the following bounded continuous functions
\beqs
\l(b,\sigma, \gamma
\r) : \R^d\times \Mc\l(\R^d\r)\times A &\to& \R^d \times \R^{d\times d'}\times \R_+
.
\enqs
Suppose $b$ and $\sigma$ are Lipschitz uniformly in $a$, $i.e.$, there exist $C>0$ such that
\begin{align*}
    \begin{split}
        \left|b\l(x,\lambda,a\r)-b\l(x',\lambda',a\r)\right|+\left|\sigma\l(x,\lambda,a\r)-\sigma\l(x',\lambda',a\r)\right|\qquad\qquad
        \\
        \leq
        C \left(\l|x-x'\r| + \d_{\R^d}\l(\lambda,\lambda'\r)\right),
    \end{split}
\end{align*}
for any $x,x'\in\R^d$, $\lambda,\lambda'\in \Mc\l(\R^d\r)$, and $a\in A$.

Let $L$ be the generator defined by
\beqs
L \varphi \l(x,\lambda,a\r) &=& b\l(x,\lambda,a\r)^\top D \varphi(x) + \frac{1}{2}\text{Tr}\left(
\sigma\sigma^\top\l(x,\lambda,a\r)D^2 \varphi(x)
\right),
\enqs
for $\varphi\in C^2_b\l(\R^d\r)$.

\section{The controlled superprocesses}\label{Section:def-existence}
The class of superprocesses is originally introduced as weak limits of branching particle systems \citep[see, $e.g.$,][]{Dawson, Etheridge, Perkins, Roelly:convergence_measure_val_processes}. Consequently, we use the weak control framework outlined in \citet{Ocello:rel_form_branching} to later obtain the scaling limit, which allows us to establish the existence of \textit{controlled superprocesses}.

An alternative methodology for proving the existence of controlled superprocesses involves using the positive maximum principle. This approach, as used in \citet{cuchiero2022measure,cuchiero2024measure,cuchiero2018polynomial,cuchiero2019polynomial}, offers a different perspective compared to the weak limit framework employed in our study. This method offers an alternative to our study, highlighting the diverse strategies available for proving the existence of these processes.

\paragraph{Controlled $n$-rescaled branching diffusion processes}

For $n\in\N$, let $\Lc^n$ be the generator defined on the cylindrical functions $F_\varphi = F(\langle \varphi, \cdot\rangle)$, for $F\in C^{2}_b\l(\R\r)$ and $\varphi\in C^2_{b}\l(\R^d\r)$, as
\begin{align}
\label{Superproceq:Lc_n}
\begin{split}
    &\Lc^n F_\varphi \l(x,\lambda,a\r)
    \\
    &=F'\l(\langle \varphi, \lambda\rangle\r) L \varphi\l(x,\lambda,a\r) + \frac{1}{2n}
    F''\l(\langle \varphi, \lambda \rangle\r)
    \l|D\varphi(x) \sigma\l(x,\lambda,a\r)\r|^2 
    \\
    &\quad
    + \gamma\l(x,\lambda,a\r)n^2\bigg(\frac{1}{2}F\left(\left\langle\varphi,\lambda\right\rangle - \frac{1}{n}\varphi(x)\right)
    \\
    &\phantom{+ \gamma\l(x,\lambda,a\r)n^2\bigg(F}
    +\frac{1}{2}F\left(\left\langle\varphi,\lambda\right\rangle + \frac{1}{n}\varphi(x)\right)
    - F_\varphi \left(\lambda\right)\bigg) .
\end{split}
\end{align}

\begin{Definition}\label{SuperprocDef:branch_diff}
    Fix $\l(t,\lambda_n\r) \in \l[0,T\r]\times \Nc^n\l[\R^d\r]$. We say that $\left(\P, \alpha \right)\in \Pc\l(\Db^d\r)\times \Ac$ is a \emph{controlled} $n$-rescaled branching diffusion process, and we denote $\left(\P, \alpha \right)\in\Rc^n_{\l(t,\lambda\r)}$, if $\P(\mu_t=\lambda_n)=1$ and the process
    \beq
    \label{SuperprocMartPb:diff-F_f-branch_diff}
    M^{F_\varphi,n}_s = F_\varphi\l(\mu_s\r) - \int_t^s \int_{\R^d} \Lc^n F_\varphi \l(x,\mu_r,\alpha_r(x)\r)\mu_r(dx)dr
    \enq
    is a $(\P,\F)$-martingale with quadratic variation
    \begin{align}
    \label{SuperprocMartPb:diff-F_f-branch_diff-quad_var}
    \begin{split}
    \left[M^{F_\varphi,n}\right]_s =& \int_t^s \left(
        F'_\varphi(\mu_r)\right)^2
        \int_{\R^d} \bigg( \frac{1}{n}\left|D\varphi(x)\sigma\l(x,\mu_r,\alpha_r(x)\r)\right|^2+\\
        &\phantom{\int_t^s \left(
            F'_\varphi(\mu_r)\right)^2
            \int_{\R^d} \bigg( \frac{1}{n}}
        \gamma\l(x,\mu_r,\alpha_r(x)\r)\varphi^2(x)
        \bigg) \mu_r(dx)dr,
        \end{split}
    \end{align}
    for any $F\in C^{2}_b\l(\R\r)$, $\varphi\in C^2_{b}\l(\R^d\r)$, and $s\geq t$.
\end{Definition}

\paragraph{Controlled superprocesses}
Let $\Lc$ be the generator defined by
\beqs
\Lc F_\varphi \l(x,\lambda,a\r) &=& F'(\langle \varphi, \lambda\rangle) L \varphi\l(x,\lambda,a\r) + \frac{1}{2}
F''(\langle \varphi, \lambda\rangle)
\gamma\l(x,\lambda,a\r)\varphi^2(x),
\enqs
where $F_\varphi$ denotes the the cylindrical function $F_\varphi= F(\langle \varphi, \cdot\rangle)$, for $F\in C^{2}_b\l(\R\r)$ and $\varphi\in C^2_{b}\l(\R^d\r)$
For simplicity, we write $F'_\varphi(\lambda)$ for $F'(\langle \varphi, \lambda\rangle)$ and $F''_\varphi(\lambda)$ for $F''(\langle \varphi, \lambda\rangle)$.

\begin{Definition}\label{SuperprocDef:Superproc}
Fix $\l(t,\lambda\r) \in \l[0,T\r]\times \Mc\l(\R^d\r)$. We say that $\left(\P, \alpha \right)\in \Pc\l(\Db^d\r)\times \Ac$ is a \emph{
controlled superprocess}, and we denote $\left(\P, \alpha \right)\in\Rc_{\l(t,\lambda\r)}$, if $\P(\mu_t=\lambda)=1$
and the process
    \beq
    \label{SuperprocMartPb:diff-F_f-superprocess}
      M^{F_\varphi}_s = F_\varphi\l(\mu_s\r) - \int_t^s \int_{\R^d} \Lc F_\varphi \l(x,\mu_r,\alpha_r(x)\r)\mu_r(dx)dr
    \enq
    is a $(\P,\F)$-martingale with quadratic variation
    \beq\label{SuperprocMartPb:diff-F_f-quad_var-superprocess}
      \left[M^{F_\varphi}\right]_s = \int_t^s  \left(F'_\varphi(\mu_r)\right)^2\int_{\R^d}\gamma \l(x,\mu_r,\alpha_r(x)\r)\varphi^2(x)\mu_r(dx)dr,
    \enq
    for any $F\in C^{2}_b\l(\R\r)$, $\varphi\in C^2_{b}\l(\R^d\r)$, and $s\geq t$.
\end{Definition}

By taking linear combinations of the functionals $F_\varphi$, we can define the martingale problem for a more extensive class of test functions (see \eqref{SuperprocDef:cyl_functions:time_dip}). This approach is utilized in the proof of Proposition \ref{Prop:existence} (Step 3), where the behavior of this expanded class is leveraged to establish the uniqueness in law for these processes.

\begin{Remark}
    The previous definition represents a generalization of the classical class of superprocesses as described in, $e.g$, \citet{Roelly:convergence_measure_val_processes, Dawson, Etheridge, Perkins,  dynkin1993superprocesses,dynkin2004superdiffusions}, where the Dawson-Watanabe process is an example. In our formulation, the previous class of \textit{superdiffusions} can be retrieved by selecting specific values for the parameters $b$, $\sigma$, and $\gamma$, while setting the control process to a constant.
    
    Moreover, a more general class of controlled superprocesses can be defined with the same martingale problem, as discussed in \citet{etheridge2004survival}, by considering the state space of tempered measures and general $p$-tempered initial conditions, $i.e.$, the space of locally finite measures $\mu$ in $\R^d$ such that
    \begin{align*}
        \int_{\R^d} \frac{1}{\l(1+\l\|x\r\|^2\r)^{p/2}}\mu(dx) <\infty.
    \end{align*}
\end{Remark}

\paragraph{Existence result}
Using the weak formulation for the controlled $n$-rescaled branching diffusion processes allows us to focus on the laws of these processes described by a martingale problem associated with \textit{starting condition} and \textit{control process}. By fixing these elements and manipulating the martingale problem, we prove that controlled superprocesses arise as a rescaling of branching processes, thereby establishing their existence. To achieve this, we extend the Aldous criterion presented in \citet{Dawson, Etheridge, Perkins, Roelly:convergence_measure_val_processes} for convergence to superprocesses to a controlled setting.

Furthermore, we generalize the martingale problem to a class of functionals that are convergence-determining in the space of càdlàg paths on finite measures and then use the ideas of \citet{SV97}, as detailed in \citet{Etheridge, Ocello:rel_form_branching}, to establish uniqueness in law through the duality method.

\begin{Proposition}\label{Prop:existence}
    Fix $t\in\l[0,T\r]$, $\alpha\in \Ac$.
    \begin{enumerate}
        \item For $n\geq1$ and $\lambda_n\in \Nc^n\l[\R^d\r]$, there exists a unique $\P^{t,\lambda_n,\alpha;n}\in \Pc\l(\Db^d\r)$ such that $(\P^{t,\lambda_n,\alpha;n},\alpha)\in\Rc^n_{\l(t,\lambda_n\r)}$.
        \item For $\lambda\in\Mc\l(\R^d\r)$ and a sequence $\left(\lambda_n\right)_{n\in\N}$ such that $\lambda_n\to\lambda$ weakly* and $\lambda_n\in\Nc^n\l[\R^d\r]$, there exists \textcolor{red}{a unique} $\P\in \Pc\l(\Db^d\r)$ such that $(\P,\alpha)\in\Rc_{\l(t,\lambda\r)}$ and we have that $\P^{t,\lambda_n,\alpha;n}\to \P$ for $n\to\infty$.
    \end{enumerate}
\end{Proposition}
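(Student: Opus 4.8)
The plan is to treat the two assertions in sequence, since the second builds on the first. For part (1), fix $n\geq 1$ and $\lambda_n\in\Nc^n[\R^d]$. I would construct $\P^{t,\lambda_n,\alpha;n}$ explicitly as the law of a controlled branching diffusion: starting from the $n\lambda_n(\R^d)$ atoms (counted with multiplicity $1/n$), let each particle evolve as a diffusion with coefficients $b(\cdot,\mu_r,\alpha_r(\cdot))$, $\sigma(\cdot,\mu_r,\alpha_r(\cdot))$ interacting through the current empirical measure $\mu_r$, and let each particle branch at rate $\gamma(\cdot,\mu_r,\alpha_r(\cdot))n^2$ into $0$ or $2$ offspring with equal probability. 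Since $\gamma$ is bounded and $A$ compact, the total branching rate is bounded on bounded populations; because the branching is critical (mean one offspring), standard estimates (cf.\ \ref{Appendix:moment-estimate}) show the population does not explode on $[0,T]$, so the construction is well posed on a suitable probability space supporting the driving Brownian motions, Poisson branching clocks, and the predictable control $\alpha$. An Itô/Dynkin computation on the resulting empirical-measure process, using the generator decomposition in \eqref{Superproceq:Lc_n}, verifies that $M^{F_\varphi,n}$ of \eqref{SuperprocMartPb:diff-F_f-branch_diff} is a martingale with the quadratic variation \eqref{SuperprocMartPb:diff-F_f-branch_diff-quad_var}, so $(\P^{t,\lambda_n,\alpha;n},\alpha)\in\Rc^n_{(t,\lambda_n)}$. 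For uniqueness, I would observe that the martingale problem \eqref{SuperprocMartPb:diff-F_f-branch_diff}--\eqref{SuperprocMartPb:diff-F_f-branch_diff-quad_var}, extended by linear combinations to the class of functions in \eqref{SuperprocDef:cyl_functions:time_dip}, is well posed on $\Db^{n,d}$: because each atom carries mass $1/n$ and $\Nc^n[\R^d]$ is discrete in its ``number of atoms'' direction, one can localize between jump times, where the problem reduces to a standard (finite-dimensional, interacting) SDE martingale problem with Lipschitz coefficients, whose law is unique, and then paste across the jump times, which occur at a locally finite set of stopping times with explicitly prescribed (symmetric binary) jump kernel.

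For part (2), the strategy is the classical two-step ``tightness plus identification of limit points, then uniqueness'' scheme. \emph{Tightness:} I would show $\{\P^{t,\lambda_n,\alpha;n}\}_n$ is tight in $\Pc(\Db^d)$ by verifying a compact containment condition together with an Aldous-type oscillation estimate. Testing the martingale problem against the functions $1$ and $\varphi_k\in\mathscr F_{\R^d}$ and using the moment bounds from \ref{Appendix:moment-estimate} controls $\sup_{s}\langle 1,\mu_s\rangle$ and the modulus of continuity of $s\mapsto\langle\varphi_k,\mu_s\rangle$ uniformly in $n$; since the $1/n$-atomic jumps vanish as $n\to\infty$ and the extra term $\tfrac1{2n}F''(\cdot)|D\varphi\,\sigma|^2$ in \eqref{Superproceq:Lc_n} is $O(1/n)$, the one-dimensional projections are tight, and by the separating/convergence-determining property of $\mathscr F_{\R^d}$ this upgrades to tightness of the measure-valued processes in the Skorohod topology on $\Db^d$ (this is the controlled analogue of the Roelly/Dawson criterion announced in the text). \emph{Identification:} along any convergent subsequence $\P^{t,\lambda_{n_j},\alpha;n_j}\to\P$, passing to the limit in \eqref{SuperprocMartPb:diff-F_f-branch_diff} and \eqref{SuperprocMartPb:diff-F_f-branch_diff-quad_var} — the $O(1/n)$ terms drop, the $n^2$(finite-difference) term converges to $\tfrac12 F''_\varphi\gamma\varphi^2$ by a second-order Taylor expansion of $F$, and the control enters only through the continuous bounded coefficients — shows $M^{F_\varphi}$ of \eqref{SuperprocMartPb:diff-F_f-superprocess} is a $\P$-martingale with the quadratic variation \eqref{SuperprocMartPb:diff-F_f-quad_var-superprocess}; together with $\P(\mu_t=\lambda)=1$ (from $\lambda_n\to\lambda$ weakly* and continuity of the time-$t$ projection on the limit) this gives $(\P,\alpha)\in\Rc_{(t,\lambda)}$. \emph{Uniqueness:} finally, as announced, I would prove uniqueness in law for $\Rc_{(t,\lambda)}$ by the duality/log-Laplace method of Stroock--Varadhan \citep{SV97} as adapted in \citet{Etheridge,Ocello:rel_form_branching}: for fixed nonnegative $\varphi$ and the mild PDE $\partial_s u_s = L_{\alpha} u_s - \tfrac12\gamma_\alpha u_s^2$ with $u_0=\varphi$, the process $s\mapsto\exp(-\langle u_{s-t},\mu_s\rangle)$ is a $\P$-martingale for any solution of the martingale problem (apply the extended martingale problem to $F(x)=e^{-x}$, using that $u$ solves the dual equation), hence $\E_\P[\exp(-\langle\varphi,\mu_s\rangle)]=\exp(-\langle u_{s-t},\mu_t\rangle)$ is determined by $(t,\lambda)$; since such Laplace functionals over nonnegative $\varphi$ determine one-dimensional distributions on $\Mc(\R^d)$, and the Markov property (itself a consequence of the well-posed martingale problem) propagates this to finite-dimensional distributions, $\P$ is unique. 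Uniqueness also makes the whole sequence converge, not merely subsequences.

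The main obstacle I anticipate is the uniqueness part of (2) in the \emph{controlled, measure-dependent} setting: the duality argument in the classical superprocess literature is carried out for coefficients depending only on space, whereas here $b,\sigma,\gamma$ depend on the measure argument $\mu_r$ and on a non-anticipative control $\alpha_r(x)$, so the ``dual'' object is not an autonomous semilinear PDE but a random, coefficient-frozen-along-a-path equation. The resolution is that, along a fixed path $\mu_\cdot$ and fixed $\alpha$, the coefficients $b(x,\mu_r,\alpha_r(x))$, $\sigma(x,\mu_r,\alpha_r(x))$, $\gamma(x,\mu_r,\alpha_r(x))$ become prescribed (bounded, spatially Lipschitz) time-dependent data, so the dual semilinear equation has a unique bounded mild solution; one then runs the duality with this path-dependent dual, using the extended martingale problem \eqref{SuperprocDef:cyl_functions:time_dip} for time-dependent test functionals of the form $e^{-\langle u_{s-t},\cdot\rangle}$. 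Care is also needed that the duality pins down the law on all of $\Db^d$ and not just marginals — this follows once the martingale problem is shown to yield the strong Markov property, which is standard given well-posedness. The secondary technical point is the uniform (in $n$) moment control needed for tightness when $\gamma$ is merely bounded and the initial masses $\lambda_n(\R^d)$ converge; this is handled by the martingale-measure representation of \ref{Appendix:moment-estimate}.
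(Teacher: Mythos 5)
Your treatment of part (2) up to uniqueness matches the paper's: the paper also proves tightness through the criterion of \citet{Roelly:convergence_measure_val_processes} (tightness of the projections $\P^n\circ\pi_{\varphi_k}^{-1}$ via the martingale property of the total mass, plus the Aldous condition for the finite-variation and increasing parts $V^n,I^n$), and identifies limit points by a Taylor expansion with Lagrange remainder of the $n^2$ finite-difference term in \eqref{Superproceq:Lc_n}, the $O(1/n)$ terms vanishing. For existence in part (1) you take a different but legitimate route: you rebuild the particle system directly for each $n$, whereas the paper imports the case $n=1$ from \citet{Ocello:rel_form_branching} and obtains general $n$ by the time--mass rescaling $\mathfrak{R}^n$ applied to the rescaled control $\alpha^n_s=\alpha_{s/n}$; your construction essentially re-proves the cited input, which is fine in principle.

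The genuine gaps are in both of your uniqueness arguments, i.e.\ precisely where the paper leans on its extended martingale problem and the results of \citet{Ocello:rel_form_branching}. (a) In part (1) you reduce uniqueness between branch times to ``a standard SDE martingale problem with Lipschitz coefficients.'' But once the control is substituted, the coefficients $x\mapsto b(x,\mu_r,\alpha_r(x))$ and $x\mapsto\sigma(x,\mu_r,\alpha_r(x))$ are only bounded and measurable (and $\sigma$ may be degenerate: no ellipticity is assumed here), and they depend measurably on the past of the measure-valued path through the predictable map $\mathfrak{a}$ of \eqref{Superproceq:doob_repr_ctrl}; uniqueness in law for such systems is not standard, and this is exactly the nontrivial content the paper delegates to Proposition 4.5 of \citet{Ocello:rel_form_branching}. (b) In part (2) your resolution of the ``main obstacle'' is circular. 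If you freeze the coefficients along the (random) path $\mu_\cdot$, the dual solution $u$ becomes a functional of the future trajectory of the very process whose law you are trying to identify; then $\exp\l(-\langle u_{s-t},\mu_s\rangle\r)$ is an anticipative functional, outside the class of non-anticipative test functionals (such as \eqref{SuperprocDef:cyl_functions:time_dip}) to which the martingale problem applies, so the claimed martingale property is not justified. Even granting it, the resulting identity reads $\E^{\P}\left[\exp\l(-\langle \varphi,\mu_s\rangle\r)\right]=\E^{\P}\left[\exp\l(-\langle u^{\mu}_{s-t},\lambda\rangle\r)\right]$ with the unknown law $\P$ on both sides, so it does not determine the one-dimensional marginals: classical duality needs a deterministic dual, or a dual process whose law is known independently. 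The paper closes this by generalizing the martingale problem to the time-dependent cylindrical domain $\Dc$ (full generator \eqref{Superproceq:def_full_generator}), representing the control via Doob's functional representation, and invoking the generalization of Theorems 4.1--4.2 of \citet{EthierKurtz} established in \citet{Ocello:rel_form_branching}; your sketch would need that machinery, or an honest treatment of the measure- and control-dependence in the dual equation, before the final ``uniqueness makes the whole sequence converge'' step can be used.
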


A detailed proof of this result can be found in \ref{appendix:proof-existence}.

\section{Itô's formula}
\label{Section:Ito}
Before we proceed to consider the control problem and develop its optimality conditions, we need to examine the behavior of $s\mapsto u\l(\mu_s\r)$ for a certain class of test functions $u$, which is more general than the class of cylindrical functions. To this purpose, we adopt the differential calculus developed in \citet{martini2023kolmogorov}, taking advantage of its denseness results to extend the martingale problem used to introduce these processes. This generalization is possible since the space of finite measures is homeomorphic to a subset of the product between probability measures and the real line when far from the measure zero. This allows, in particular, to use the lifting technique by renormalizing the measures whenever we are not close to this critical measure. 

This approach is similar to the techniques used also by \citet{guo2023ito,cox2024controlled}, where the application of Itô's formula begins with cylindrical functions and is then broadened to encompass sufficiently differentiable functions by leveraging the denseness of the initial class. The strength of this method lies in its reliance exclusively on weak forms of measure flows, making it particularly well-suited for analyzing stochastic processes that are presented in a weak form, as is the case here.

\subsection{Differential calculus}

The growing literature about differential calculus in the space of measures focuses on two main objects: the linear functional derivative (also called flat derivative or extrinsic derivative) and the $L$-derivative (also called intrinsic derivative). The first is defined directly in $\Pc\l(\R^d\r)$, while the second relies on the lifting on a Hilbert space. It is found that one is the spatial gradient of the previous one, coinciding with the notion of derivative of \citet{Lions}, and sometimes this is used as the definition for the $L$-derivatives, like in \citet{cardaliaguet2022splitting}.
Detailed discussions of this topic can also be found for example in \citet{cardaliaguet2019master, book:Carmona-Delarue_1, cardaliaguet2022splitting}.

We present the reformulation of these concepts in $\Mc\l(\R^d\r)$, using the \textit{directional derivative}, which corresponds to the \textit{linear derivative}, as the foundational building block. As also discussed in \citet{martini2023kolmogorov}, this notion is fundamental in both the space of probability measures and the space of finite measures. This approach simplifies the discussion of differential calculus by eliminating the need for more complex notions of differentiation. We refer to \citet{ren2021derivative} to demonstrate that this methodology is equivalent to directly employing more advanced differential concepts.

\begin{Definition}[Linear derivative]
    A function $u: \Mc\l(\R^d\r) \to \R$ is said to have \emph{linear derivative} if it is continuous, bounded and if there exists a function
    \beqs
    \delta_\lambda u : \Mc\l(\R^d\r)\times\R^d\ni (\lambda,x)\mapsto \delta_\lambda u(\lambda,x)\in\R,
    \enqs
    that is bounded, and continuous for the product topology, such that
    \begin{align*}
        u(\lambda) - u(\lambda') = \int_0^1\int_{\R^d} \delta_\lambda u\left(t\lambda +(1-t)\lambda', x \right)
        \l(\lambda-\lambda'\r) \left(dx\right)dt
        ,
    \end{align*}
    for $\lambda, \lambda' \in \Mc\l(\R^d\r)$. We call $C^{1}\l(\Mc\l(\R^d\r)\r)$ the class of functions that are differentiable in linear functional sense from $\Mc\l(\R^d\r)$ to $\R$.
\end{Definition}

Notice that $\delta_\lambda u$ is uniquely defined up to a constant. We take
\beqs
\int_{\R^d}\delta_\lambda u(\lambda,x)\lambda(dx) = 0,
\enqs
as a convention in this paper. Moreover, second-order derivatives are introduced for $u\in C^1\l(\Mc\l(\R^d\r)\r)$ imposing that $\lambda\mapsto \delta_\lambda u(\lambda,x)$  is differentiable in linear functional sense for every $x$ and that $(\lambda,x,y)\mapsto \delta_\lambda^2 u(\lambda,x,y)$ is bounded and continuous. We call $C^{2}\l(\Mc\l(\R^d\r)\r)$ this class of functions.

Finite positive measures could not rely on lifting procedures. For this reason, the notion of \textit{intrinsic} derivative is introduced differentiating with respect to the $x$ component the flat derivative, as done in \citet[Definition 2.2,][]{cardaliaguet2019master}.
\begin{Definition}[Intrinsic derivative]
    Fix $u\in C^1\l(\Mc\l(\R^d\r)\r)$. If $\delta_\lambda u$ is of class $C^1$ with respect to the second variable, the intrinsic derivative $D_\lambda u:\Mc\l(\R^d\r)\times \R^d\to \R$ is
    \beqs
    D_\lambda u(\lambda,x)= \partial_x \delta_\lambda u(\lambda,x).
    \enqs
    We denote with $C^{1,1} \l(\Mc\l(\R^d\r)\r)$ this class of functions.
\end{Definition}

Differentiating with respect to the measure or the space component are two different operations. We denote $C^{k,\ell} \l(\Mc\l(\R^d\r)\r)$ with $k\in N$ to be the collection of functions $u$ that are differentiable $k$ times with respect to the measure and such that the $k$-th derivative with respect to the measure is $\ell$-th times continuously differentiable with respect to its spatial components.

\subsection{Generalized Itô's formula}

We can now present (a weak version of) the Itô's formula for superprocesses. This formula is interpretable only in a weak sense, reflecting the nature of these processes. Let $\Lb$ be the following operator on $u\in C^{2,2}_b\l(\Mc\l(\R^d\r)\r)$
\beqs
    \Lb u(\lambda, x, a)  & := &
    b\l(x,\lambda,a\r)^\top D_\lambda u(\lambda,x) + \frac{1}{2}\text{Tr}\left(
    \sigma\sigma^\top\l(x,\lambda,a\r)\partial_x D_\lambda u(\lambda,x)
    \right)\\
    & & +  \frac{1}{2}\gamma\l(x,\lambda,a\r)\delta_\lambda^2 u(\mu,x,x)
\enqs
for $\l(x,\lambda,a\r)\in \R^d\times \Mc\l(\R^d\r)\times A$.

\begin{Proposition}\label{SuperprocProp:general_mart_pb}
    For $\l(t,\lambda\r) \in \l[0,T\r]\times \Mc\l(\R^d\r)$ and $\alpha\in\Ac$, the following are equivalent:
    \begin{enumerate}[(i)]
        \item $\left(\P^{t,\lambda,\alpha},\alpha\right)\in \Rc_{\l(t,\lambda\r)}$;
        \item the process
        \beq
        \label{SuperprocMartPb:generalized}
        M^{u}_s = u\l(\mu_s\r) - \int_t^s \int_{\R^d} \Lb u\l(x,\mu_r,\alpha_r(x)\r)\mu_r(dx)dr
        \enq
        is a $(\P,\F)$-martingale with quadratic variation
        \beq\label{SuperprocMartPb:generalized-quad_var}
        \left[M^{u}\right]_s = \int_t^s \gamma\l(x,\mu_r,\alpha_r(x)\r)\left|\delta_\lambda u(\mu_r,x)\right|^2  \mu_r(dx)dr,
        \enq
        for any $u\in C^{2,2}_{b}\l(\Mc\l(\R^d\r)\r)$, and $s\geq t$.
    \end{enumerate}
\end{Proposition}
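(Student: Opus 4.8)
The plan is to prove the equivalence by showing that (i) is a special case of (ii) in one direction, and by approximating general $u \in C^{2,2}_b(\Mc(\R^d))$ by cylindrical functions in the other. The direction (ii)$\Rightarrow$(i) is immediate: given a cylindrical $F_\varphi = F(\langle\varphi,\cdot\rangle)$ with $F \in C^2_b(\R)$ and $\varphi \in C^2_b(\R^d)$, one computes the linear derivatives directly, $\delta_\lambda F_\varphi(\lambda,x) = F'(\langle\varphi,\lambda\rangle)\varphi(x) + c$ (with the constant fixed by our centering convention, i.e.\ $c = -F'(\langle\varphi,\lambda\rangle)\langle\varphi,\lambda\rangle/\lambda(\R^d)$ when $\lambda \ne 0$), $D_\lambda F_\varphi(\lambda,x) = F'(\langle\varphi,\lambda\rangle)D\varphi(x)$, $\partial_x D_\lambda F_\varphi(\lambda,x) = F'(\langle\varphi,\lambda\rangle)D^2\varphi(x)$, and $\delta^2_\lambda F_\varphi(\lambda,x,x) = F''(\langle\varphi,\lambda\rangle)\varphi^2(x)$. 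Substituting into $\Lb$ recovers exactly $\Lc F_\varphi$, and substituting into the quadratic variation formula \eqref{SuperprocMartPb:generalized-quad_var} recovers \eqref{SuperprocMartPb:diff-F_f-quad_var-superprocess}, since $(\delta_\lambda F_\varphi)^2$ integrated against $\gamma\varphi^2\,\mu$ — after noting the additive constant drops out under the $\mu$-integral against $\varphi^2\gamma$... more precisely one should be slightly careful here: the constant does \emph{not} drop out of $|\delta_\lambda u(\mu_r,x)|^2$, so one must verify that the martingale $M^{F_\varphi}$ and its bracket are insensitive to the choice of constant. This holds because $M^u_s$ depends on $\delta_\lambda u$ only through $\Lb$ (which uses $D_\lambda u$ and $\delta^2_\lambda u$, both constant-free) and because the bracket is pinned down by the martingale via polarization of $M^{F_\varphi}$ with $M^{F_\psi}$; alternatively, restricting to the centering convention makes the representation canonical. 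Either way, (ii) specialized to cylindrical $u$ gives precisely Definition \ref{SuperprocDef:Superproc}, so (ii)$\Rightarrow$(i).

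For (i)$\Rightarrow$(ii), the strategy is the denseness/approximation argument advertised in the text (and deferred to \ref{Appendix:denseness-cyl-fct}). First I would extend the martingale problem from single cylindrical functions to finite linear combinations $u = \sum_{j=1}^N F_j(\langle\varphi_j,\cdot\rangle)$, and more generally to the algebra generated by cylindrical functions; by It\^o's formula for finite-dimensional semimartingales applied to the $\R^N$-valued process $(\langle\varphi_1,\mu_s\rangle,\dots,\langle\varphi_N,\mu_s\rangle)$, whose dynamics are governed by \eqref{SuperprocMartPb:diff-F_f-superprocess}–\eqref{SuperprocMartPb:diff-F_f-quad_var-superprocess}, one obtains \eqref{SuperprocMartPb:generalized}–\eqref{SuperprocMartPb:generalized-quad_var} for any smooth function of finitely many linear functionals — this is the class \eqref{SuperprocDef:cyl_functions:time_dip}. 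Then, for general $u \in C^{2,2}_b(\Mc(\R^d))$, invoke the denseness result: there is a sequence $(u^k)$ of such (poly-)cylindrical functions with $u^k \to u$, $\delta_\lambda u^k \to \delta_\lambda u$, $D_\lambda u^k \to D_\lambda u$, $\partial_x D_\lambda u^k \to \partial_x D_\lambda u$, and $\delta^2_\lambda u^k \to \delta^2_\lambda u$, all uniformly (or boundedly and pointwise). Writing down $M^{u^k}$ and $[M^{u^k}]$ and passing to the limit using boundedness of $b,\sigma,\gamma$, the moment estimates of \ref{Appendix:moment-estimate} (to control $\int_t^s \mu_r(\R^d)\,dr$ and $\int_t^s \langle 1,\mu_r\rangle^2 dr$ in expectation), and dominated convergence, yields that $M^u$ is a martingale with the stated bracket. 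One must check the limit is taken in a strong enough sense: $L^2(\P)$ convergence of $M^{u^k}_s \to M^u_s$ for each $s$ suffices to preserve the martingale property, and convergence of the brackets follows from the same estimates applied to $\gamma |\delta_\lambda u^k|^2 - \gamma|\delta_\lambda u|^2$.

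The main obstacle is the denseness step itself, i.e.\ producing poly-cylindrical approximants that converge \emph{simultaneously} in the function value and all the relevant derivatives ($\delta_\lambda$, $D_\lambda = \partial_x\delta_\lambda$, $\partial_x D_\lambda$, and $\delta^2_\lambda$), uniformly on the (non-locally-compact) space $\Mc(\R^d)$ — and in particular handling the behaviour near the zero measure, where the homeomorphism with $\Pc(\R^d)\times\R_+$ used for the lifting degenerates. This is exactly why the text restricts to bounded functions with bounded derivatives and why the construction is relegated to \ref{Appendix:denseness-cyl-fct}; the key technical device is the renormalization trick (writing $\lambda = \lambda(\R^d)\cdot\bar\lambda$ with $\bar\lambda \in \Pc(\R^d)$ away from $0$) combined with a cutoff, together with the countable separating families $\mathscr{F}_{\R^d} \subset C^2_b(\R^d)$ fixed in Section \ref{Section:Setting}, whose total mass coordinate gives the needed $C^2$ regularity in the scalar directions. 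A secondary, more routine difficulty is the bookkeeping in the finite-dimensional It\^o expansion to confirm that the second-order terms assemble into precisely the $\tfrac12\gamma\,\delta^2_\lambda u(\mu,x,x)$ term of $\Lb$ and the $\gamma|\delta_\lambda u|^2$ density of the bracket, using that the off-diagonal Hessian entries of $F(\langle\varphi_1,\cdot\rangle,\dots)$ in the $\langle\varphi_i,\cdot\rangle$ variables pair with the cross-bracket densities $\gamma\varphi_i\varphi_j$, which is consistent with $\delta^2_\lambda u(\lambda,x,y)$ evaluated on the diagonal $y=x$.
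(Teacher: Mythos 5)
Your overall route coincides with the paper's: the direction (ii)$\Rightarrow$(i) is exactly the computation of the derivatives of cylindrical functions (the paper's equation \eqref{eq:derivative_cyl_fct}), and (i)$\Rightarrow$(ii) is the extension to multi-functional cylindrical functions (the class $\Dc^T$ built in Step 3 of the existence proof) followed by the denseness results of \ref{Appendix:denseness-cyl-fct} and a dominated-convergence passage to the limit, with the bracket identified through the $L^2$ identity for the approximants. Your remark about the additive constant in $\delta_\lambda F_\varphi$ is a reasonable (in fact slightly more careful) gloss on a point the paper passes over.

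There is, however, one concrete step in (i)$\Rightarrow$(ii) that your plan elides and that would fail as written. You invoke approximants $u^k$ converging to $u$ ``uniformly (or boundedly and pointwise)'' on $\Mc(\R^d)$, together with all derivatives. The appendix does not provide this: Lemma \ref{SuperprocLemma:density:K_k_N} and Lemma \ref{SuperprocLemma:density:D_T} only give pointwise convergence with uniform bounds on the sets $\Mc_k\l(\R^d\r)$, i.e.\ for measures whose total mass lies in $[1/k,k]$ (and, for the cylindrical step, via the compact sets $\Kc^k_N$); no global approximation on $\Mc(\R^d)$ is available, precisely because the lifting $\lambda\mapsto(\lambda/\lambda(\R^d),\lambda(\R^d))$ degenerates at $\0$ and the mass is unbounded above. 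The moment estimates cannot repair this, since the obstruction concerns where the approximation of $u$ is valid, not integrability. The paper's proof therefore first localizes the canonical process through the stopping times
\begin{align*}
\tau_k := \inf\left\{s\geq t: \langle 1,\mu_s\rangle>k\right\}\wedge\inf\left\{s\geq t: \langle 1,\mu_s\rangle<\tfrac{1}{k}\right\},
\end{align*}
so that the stopped process $\mu_{\cdot\wedge\tau_k}$ stays in $\Mc_k\l(\R^d\r)$, proves the martingale property and the bracket identity for the stopped process by dominated convergence along the cylindrical approximants, and only then removes the localization, using the moment bound \eqref{Superproceq:moment_bound:mass} and dominated convergence again; the degenerate starting point $\lambda=\0$ is treated separately (the process is then constant and the claim is trivial). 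Your proposal needs this localization made explicit for the limit passage to be legitimate; the rest of your argument (extension to poly-cylindrical functions via the finite-dimensional It\^o formula and polarization of the brackets, $L^2$ convergence to preserve the martingale property) matches the paper's proof.
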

A detailed proof of this result can be found in \ref{appendix:proof-general_mart_pb}.
This result leverages the density of cylindrical functions within $C^{2,2}_{b}\l(\Mc\l(\R^d\r)\r)$-functions, as detailed in \ref{Appendix:denseness-cyl-fct}.

\section{The control problem}\label{Section:ctrl_pb}
We are given two continuous functions $ \psi : \R^d\times \Mc\l(\R^d\r)\times A \to \R$ and $ \Psi : \Mc\l(\R^d\r) \to \R$. We assume that there exists $C>0$ such that
\beq\label{Superproceq:growth_cond_psi_Psi}
\left|\psi\l(x,\lambda,a\r)\right|\leq C\left(1+\d_{\R^d}(\lambda,\0)\right), \qquad
\left|\Psi(\lambda)\right| \leq C\left(1+\d_{\R^d}(\lambda,\0)^2\right)
\enq
for $\l(x,\lambda,a\r)\in \R^d\times \Mc\l(\R^d\r)\times A$ with $\0$ the measure $0$.

Let $J$ and $v$ be respectively the cost and the value functions w.r.t. the controlled superprocesses,
defined as
\begin{align}
\begin{split}
    J(t,\lambda,\alpha) &= \E^{\P^{t,\lambda,\alpha}}\left[\int_t^T \int_{\R^d} \psi(x,\mu_s,\alpha_s(x))\mu_s(dx)ds +\Psi(\mu_T)\right],
\end{split}
\label{eq:cost_fct}
\end{align}
and
\begin{align}
    v\l(t,\lambda\r) =\inf_{\alpha\in\Ac}J(t,\lambda,\alpha),
    \label{eq:value_fct}
\end{align}
for $t\in\l[0,T\r]$, $\lambda\in \Mc\l(\R^d\r)$, and $\alpha\in\Ac$.

\begin{Proposition}[Proposition \ref{SuperprocProp:moment_bounds:mass}]
Fix $\l(t,\lambda\r) \in \l[0,T\r]\times \Mc\l(\R^d\r)$ (resp. $\l(t,\lambda\r) \in \l[0,T\r]\times \Nc^n\l[\R^d\r]$ for $n\geq1$) and $p\in[1,2]$. There exists a constant $C\geq0$, depending only on $T,$ and the coefficient of the parameters, such that
\begin{align*}
    \E^\P\left[\sup_{r\in [t,T]}\d_{\R^d}(\mu_r,\0)^p\right] \leq C~\d_{\R^d}(\lambda,\0)^p,
\end{align*}
for any $\left(\P, \left(\alpha_s\right)_s \right)\in\Rc_{\l(t,\lambda\r)}$.
\end{Proposition}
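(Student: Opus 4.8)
The plan is to reduce the measure--valued estimate to a scalar estimate for the total mass process $N_s:=\langle\1,\mu_s\rangle$ and then to exploit that $N$ is a nonnegative martingale under any $(\P,\alpha)\in\Rc_{(t,\lambda)}$.

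\textbf{Step 1 (reduction to the total mass).} Every $\varphi_k\in\mathscr{F}_{\R^d}$ satisfies $\|\varphi_k\|_\infty\le1$ and $q_k\ge1$, so $|\langle\varphi_k,\nu\rangle|\le\langle\1,\nu\rangle$ and hence $\d_{\R^d}(\nu,\0)\le2\langle\1,\nu\rangle$; retaining only the summand attached to the constant function $\1\in\mathscr{F}_{\R^d}$ (whose $q_k$ equals $1$) gives the reverse inequality $\d_{\R^d}(\nu,\0)\ge c\,\langle\1,\nu\rangle$ for a constant $c>0$ depending only on the enumeration $\mathscr{F}_{\R^d}$. Thus $\d_{\R^d}(\cdot,\0)$ is, up to two multiplicative constants, the total mass $\langle\1,\cdot\rangle$, and it suffices to bound $\E^\P[\sup_{r\in[t,T]}N_r^p]$ by $C\langle\1,\lambda\rangle^p$, uniformly over $\Rc_{(t,\lambda)}$ (and, in the ``resp.'' statement, over $\Rc^n_{(t,\lambda_n)}$; the argument below is the same there).

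\textbf{Step 2 ($N$ is a nonnegative martingale, and its bracket).} Feed $\varphi\equiv\1$ into the martingale problem of Definition~\ref{SuperprocDef:Superproc} (resp.\ Definition~\ref{SuperprocDef:branch_diff}): since $L\1\equiv0$ and $D\1\equiv0$, the generator reduces to $\Lc F_\1(x,\lambda,a)=\tfrac12F''(\langle\1,\lambda\rangle)\gamma(x,\lambda,a)$ (resp.\ to the branching increment, which vanishes on affine $F$), so that, ``formally'' taking $F$ to be the identity, $N$ itself solves the martingale problem with bracket $[N]_s=\int_t^s\langle\gamma(\cdot,\mu_r,\alpha_r(\cdot)),\mu_r\rangle\,dr\le\|\gamma\|_\infty\int_t^sN_r\,dr$. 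To legitimise the unbounded test function $y\mapsto y$, I would localise at $\tau_R:=\inf\{r\ge t:N_r\ge R\}$ and use a sequence $F_R\in C^2_b(\R)$ equal to the identity on $[0,R]$ with $0\le F_R\le F_{R+1}$, $\|F_R'\|_\infty\le1$, and $\|F_R''\|_\infty\le C/R$ supported in $\{y\ge R\}$; on $\{N_r\ge R\}$ one has $\langle\gamma,\mu_r\rangle\le\|\gamma\|_\infty N_r\le 2R\|\gamma\|_\infty$, which cancels the factor $1/R$, so the second--order correction stays uniformly bounded. Letting $R\to\infty$ with monotone and dominated convergence then shows $N_r\in L^1(\P)$ for all $r$ and that $N$ is a nonnegative $(\P,\F)$--martingale. (In the branching case the paths jump by $\pm1/n$, so $N_{r\wedge\tau_R}\le R+1/n$; nothing else changes.) Equivalently, one may invoke the orthogonal martingale--measure representation of the process recalled in~\ref{Appendix:moment-estimate}, under which $N_s=\langle\1,\lambda\rangle+\int_t^s\!\int_{\R^d}\sqrt{\gamma}\,M(dr,dx)$ for a martingale measure $M$ of intensity $\mu_r(dx)\,dr$, which makes the bracket computation transparent.

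\textbf{Step 3 (maximal inequality and closing).} Work with the stopped process $N^{\tau_R}$, a bounded martingale, and fix $p\in[1,2]$. The Burkholder--Davis--Gundy inequality together with the bound on the bracket gives
\beqs
\E^\P\Big[\sup_{r\in[t,s]}\big|N^{\tau_R}_r-\langle\1,\lambda\rangle\big|^p\Big]\;\le\;C_p\,\|\gamma\|_\infty^{p/2}\;\E^\P\Big[\Big(\int_t^sN^{\tau_R}_r\,dr\Big)^{p/2}\Big],
\enqs
and since $\E^\P[N^{\tau_R}_r]=\langle\1,\lambda\rangle$ by optional stopping, a Gronwall argument on $s\mapsto\E^\P[\sup_{r\le s}(N^{\tau_R}_r)^p]$ — using $(a+b)^p\le2^{p-1}(a^p+b^p)$, Jensen's inequality, and the comparison $\int_t^sN^{\tau_R}_r\,dr\le\int_t^s\sup_{u\le r}N^{\tau_R}_u\,dr$ — closes the estimate with a constant depending only on $p$, $T$, $\|\gamma\|_\infty$. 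Letting $R\to\infty$ and using monotone convergence finishes the proof. Alternatively, for $p=2$ one argues directly from $\E^\P[N_s^2]=\langle\1,\lambda\rangle^2+\E^\P[[N]_s]$ and Doob's inequality, and then recovers $p\in[1,2]$ via Jensen, $\E^\P[(\sup_rN_r^2)^{p/2}]\le(\E^\P[\sup_rN_r^2])^{p/2}$. Uniformity over $\Rc_{(t,\lambda)}$ is automatic since every constant involves only $\|\gamma\|_\infty$ and $T$, never $\alpha$ (the coefficients $b,\sigma$ do not even enter, the total mass being conserved in mean).

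\textbf{Expected main difficulty.} The two delicate points are: (i) the truncation in Step~2 — one must check that the correction produced by replacing $y\mapsto y$ with $F_R$ does not blow up, which is exactly the cancellation of $\|F_R''\|_\infty\sim1/R$ against $N_r\le 2R$ on $\{N_r\ge R\}$; and (ii) the Gronwall bookkeeping in Step~3, where the constants must be kept independent of the control and of the chosen element of $\Rc_{(t,\lambda)}$ (and of $n$ in the rescaled case).
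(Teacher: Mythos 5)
Your Steps 1--2 are correct and constitute a genuinely cleaner route than the paper's. The paper applies the martingale-measure representation of \ref{Appendix:moment-estimate} to every $\varphi_k$ of the separating family, estimates each coordinate $\langle \varphi_k,\mu\rangle$ with Young/BDG, and then sums the weighted coordinates before running Gronwall on $\E^\P\big[\sup_r \d_{\R^d}(\mu_r,\0)^p\big]$. Your observation that $\d_{\R^d}(\cdot,\0)$ is comparable to the total mass (the constant function lies in $\mathscr{F}_{\R^d}$ with $q_k=1$, and all $\varphi_k$ satisfy $\|\varphi_k\|_\infty\le 1$, $q_k\ge1$) collapses everything to a scalar statement about the nonnegative martingale $N_s=\langle 1,\mu_s\rangle$, and it kills the drift terms since $L1=0$; the localization is also fine (taking $F_R$ equal to the identity on $[0,2R]$ and stopping at $\tau_R$, the stopped process never sees the truncation, in both the superprocess and the $n$-rescaled case).

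The gap is in Step 3: the asserted Gronwall closure does not, and cannot, produce the homogeneous bound. After BDG you must control $\E^\P\big[\big(\int_t^s N^{\tau_R}_r\,dr\big)^{p/2}\big]$, and none of the listed tools turns this into a multiple of $\langle 1,\lambda\rangle^p$: concave Jensen together with $\E^\P[N^{\tau_R}_r]=\langle 1,\lambda\rangle$ yields $\big((s-t)\langle 1,\lambda\rangle\big)^{p/2}$, while the sup-comparison plus Young produces an additive constant independent of $\lambda$; either way one obtains $\E^\P[\sup_r N_r^p]\le C\big(\langle 1,\lambda\rangle^p+\langle 1,\lambda\rangle^{p/2}\big)$ (or $+\,\langle 1,\lambda\rangle$), never $C\langle 1,\lambda\rangle^p$. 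Your own $p=2$ shortcut makes this explicit: $\E^\P[N_T^2]=\langle 1,\lambda\rangle^2+\E^\P\big[[N]_T\big]\le \langle 1,\lambda\rangle^2+\|\gamma\|_\infty T\,\langle 1,\lambda\rangle$, and the second term dominates for small mass. In fact the literal statement cannot be rescued: for the uncontrolled Dawson--Watanabe process with $\gamma\equiv1$ the total mass is a Feller diffusion with $\E[N_T^2]=m^2+(T-t)m$, while $\d_{\R^d}(\lambda,\0)\asymp m$, so the claimed inequality with $p=2$ fails as $m\to0$ (the branching noise scales like the square root of the mass, which is the source of the inhomogeneity). To be fair, the paper's own proof shares this defect: its BDG term is dominated by $\E^\P\big[\int_t^T\langle 1,\mu^N_r\rangle^p dr\big]$ without justification, and its pre-Gronwall display carries $\d_{\R^d}(\lambda,\0)$ to the first power. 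What both arguments genuinely deliver is $\E^\P\big[\sup_r \d_{\R^d}(\mu_r,\0)^p\big]\le C\big(\d_{\R^d}(\lambda,\0)^p+\d_{\R^d}(\lambda,\0)\big)$, which is all that is needed for the finiteness of $J$ under \eqref{Superproceq:growth_cond_psi_Psi}; you should state that bound rather than claim the Gronwall closes to $C\,\d_{\R^d}(\lambda,\0)^p$.
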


The previous result entails that the cost functional $J$ is finite for any control $\alpha\in\Ac$. Moreover, using \eqref{Superproceq:growth_cond_psi_Psi}, $J$ is uniformly bounded from below, therefore the optimization problem that defines $v$ is well-posed.

\subsection{Dynamic Programming Principle}\label{SuperprocSection:DPP}

We aim at analyzing the control problem \eqref{eq:cost_fct}-\eqref{eq:value_fct} via a  Dynamic Programming Principle (DPP) approach, a powerful tool for solving control problems.
We present the following result here, but for a more detailed description and comprehensive explanation, please refer to \ref{Appendix:DPP}. This section provides an in-depth analysis and the underlying methodology following the techniques described in \citet{ElK:Tan:Capacities1,ElK:Tan:Capacities2}, ensuring a thorough understanding of the result and its implications.

\begin{Theorem}[Theorem \ref{SuperprocTheorem:DPP}]
    We have
    \begin{equation*}
            v\l(t,\lambda\r) = \inf_{\alpha\in\Ac}\E^{\P^{t,\lambda,\alpha}}\left[
            \int_t^\tau \int_{\R^d} \psi(x,\mu_s,\alpha_s(x))\mu_s(dx)ds + v(\tau,\mu_\tau)\right],
    \end{equation*}
    for any $\l(t,\lambda\r) \in \l[0,T\r]\times \Mc\l(\R^d\r)$, and $\tau$ stopping time taking value in $[t,T]$.
\end{Theorem}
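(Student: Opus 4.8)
The plan is to establish the Dynamic Programming Principle by proving the two inequalities $v(t,\lambda)\le \inf_\alpha \E^{\P^{t,\lambda,\alpha}}[\cdots]$ and $v(t,\lambda)\ge \inf_\alpha \E^{\P^{t,\lambda,\alpha}}[\cdots]$ separately, following the capacity-theoretic framework of \citet{ElK:Tan:Capacities1,ElK:Tan:Capacities2}. The crucial structural facts I would need are: (a) a \emph{measurable selection} property, guaranteeing that $(t,\lambda)\mapsto v(t,\lambda)$ is (universally) measurable, so that the composition $v(\tau,\mu_\tau)$ makes sense inside an expectation; (b) a \emph{conditioning / disintegration} property of the set $\Rc_{(t,\lambda)}$ — namely, that if $(\P,\alpha)\in\Rc_{(t,\lambda)}$, then for $\P$-a.e.\ $\omega$ the regular conditional probability $\P^{\tau,\omega}$, together with the shifted control, belongs to $\Rc_{(\tau(\omega),\mu_{\tau(\omega)}(\omega))}$; and (c) a \emph{concatenation / stability} property, that a control $\alpha$ on $[t,\tau]$ can be pasted with a measurably-selected family of near-optimal controls on $[\tau,T]$ to produce an element of $\Rc_{(t,\lambda)}$. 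These three ingredients are exactly what the martingale-problem formulation in Definition \ref{SuperprocDef:Superproc} is designed to support, since the martingale property is stable under regular conditioning and concatenation.

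For the inequality $v(t,\lambda)\ge \inf_\alpha \E^{\P^{t,\lambda,\alpha}}[\int_t^\tau\langle\psi(\cdot,\mu_s,\alpha_s),\mu_s\rangle\,ds + v(\tau,\mu_\tau)]$, I would fix an arbitrary $(\P^{t,\lambda,\alpha},\alpha)\in\Rc_{(t,\lambda)}$, apply the flow/tower property of conditional expectation at time $\tau$, and use property (b) to recognize the conditional expectation of the remaining cost, $\E^{\P^{\tau,\omega}}[\int_\tau^T\langle\psi,\mu_s\rangle ds + \Psi(\mu_T)]$, as $J(\tau(\omega),\mu_\tau(\omega),\alpha^{\tau,\omega})\ge v(\tau(\omega),\mu_\tau(\omega))$. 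Taking expectations and then the infimum over $\alpha$ gives the bound. For the reverse inequality $v(t,\lambda)\le \inf_\alpha \E^{\P^{t,\lambda,\alpha}}[\cdots]$, I would fix $\alpha$ and $\eps>0$, invoke a measurable selection theorem to pick, for each $(s,\nu)$, a control $\alpha^{s,\nu,\eps}$ with $J(s,\nu,\alpha^{s,\nu,\eps})\le v(s,\nu)+\eps$ in a jointly measurable way, then concatenate $\alpha$ on $[t,\tau]$ with $\alpha^{\tau(\omega),\mu_\tau(\omega),\eps}$ on $[\tau,T]$ using property (c) to obtain an admissible $(\tilde\P,\tilde\alpha)\in\Rc_{(t,\lambda)}$; evaluating $J(t,\lambda,\tilde\alpha)\ge v(t,\lambda)$ and using the flow property yields $v(t,\lambda)\le \E^{\P^{t,\lambda,\alpha}}[\int_t^\tau\langle\psi,\mu_s\rangle ds + v(\tau,\mu_\tau)] + \eps$, and letting $\eps\downarrow0$ and taking the infimum over $\alpha$ closes the argument. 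The integrability needed to justify all the conditional-expectation manipulations is supplied by the moment bound of Proposition \ref{SuperprocProp:moment_bounds:mass} together with the growth conditions \eqref{Superproceq:growth_cond_psi_Psi}.

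The main obstacle I anticipate is establishing the regular-conditional-probability and concatenation properties (b) and (c) with the \emph{relaxed/weak control} built into $\Ac$: one must check that the shifted control $\alpha^{\tau,\omega}$ remains $\{\Bc(\R^d)\otimes\Fc^\mu_s\}$-predictable and $A$-valued for $\P$-a.e.\ $\omega$, and that the pasted control is genuinely predictable on the whole interval; this requires some care with the canonical-space structure and the predictable $\sigma$-field on $\R^d\times\Db^d\times[0,T]$. A secondary technical point is the measurability of $v$ itself — to even write $v(\tau,\mu_\tau)$ one needs $v$ to be at least universally measurable, which typically follows from expressing $v$ as an infimum over a Suslin-parametrized family via the Jankov–von Neumann selection theorem applied to the (analytic) graph of $\Rc$. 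Once (a)–(c) are in hand, the two inequalities are essentially bookkeeping with conditional expectations, so I would relegate those verifications — along with the detailed selection arguments — to \ref{Appendix:DPP}, where the analogous statements for the controlled $n$-rescaled branching processes can be treated in parallel.
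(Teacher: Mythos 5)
Your proposal is correct and follows essentially the same route as the paper: the El Karoui--Tan framework, with stability by conditioning, stability by concatenation, and a measurable selection of $\eps$-optimal rules, then the two inequalities via the tower property exactly as you sketch. The one step you flag as the main obstacle --- making the conditioning and concatenation of the weak controls in $\Ac$ rigorous --- is resolved in the paper by passing to a relaxed formulation on the enlarged canonical space $\bar\Omega = \Db^d\times\mathfrak{A}^{\text{Leb}}$, where the control is itself part of the canonical process; the sets $\Cc_{\l(t,\lambda\r)}$ of weak control rules are identified with $\Rc_{\l(t,\lambda\r)}$, and the two stability properties then follow from the lemmas of \citet{ElK:Tan:Capacities2}, while the selection of $\eps$-optimal rules uses \citet{ElK:Tan:Capacities1} applied to the upper semi-analytic (in fact continuous) cost.
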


\subsection{HJB Equation}\label{SuperprocSubsection:HJB}

For general control problems, the DPP leads to a characterization of the problem through a nonlinear Hamilton-Jacobi-Bellmann (HJB) equation \citep[see, $e.g.$,][]{Yong-Zhou-StochasticControls}. Looking at \eqref{SuperprocMartPb:generalized}, define the operator $H$ on $\R^d \times \Mc\l(\R^d\r)\times A\times \R^d\times \R^{d\times d} \times \R$ such that
\begin{equation}\label{SuperprocHJB:eq}
    \begin{split}
      H\l(x,\lambda,a,p,M,r\r)=
      b\l(x,\lambda,a\r)^\top p + \frac{1}{2}\text{Tr}\left(
      \sigma\sigma^\top\l(x,\lambda,a\r)M
      \right) \\
      + \frac{1}{2}\gamma\l(x,\lambda,a\r)r +\psi\l(x,\lambda,a\r).
    \end{split}
  \end{equation}
Then, if the value function \eqref{eq:value_fct} is sufficiently smooth, generalizing Proposition \ref{SuperprocProp:general_mart_pb} to function depending in time and measure yields the following HJB equation
\beqs
\begin{cases}
    \partial_t v\l(t,\lambda\r) + \int_{\R^d}\inf_{a\in A}H\Big(x,\lambda,a,D_\lambda v(t,\lambda,x),\\
\qquad\qquad\qquad\qquad\qquad\qquad\qquad\qquad\partial_x D_\lambda v(t,\lambda,x),\delta_\lambda^2 v(t,\lambda,x,x)\Big)
\lambda(dx) = 0,\\
v\l(t,\lambda\r)=\Psi(\lambda).
\end{cases}
\enqs
The form of this HJB equation looks like the one for mean field control \citep[see, $e.g.$,][]{bayraktar2018randomized, Djete:Possamai:Tan:DPP, guo2023ito, pham2018bellman, Wu:Zhang:ViscosityMFC_closed_loop}, where the infimum (or the supremum if maximizing) is taken inside the integral. The major differences here are that we consider the space of finite measures and not only probability measures and that the second-order flat derivative is explicitly involved in the Hamiltonian.

We have the following result in the regular case.

\begin{Theorem}[Verification Theorem]\label{Thm:verification}
    Let $V:\l[0,T\r]\times \Mc\l(\R^d\r)\to \R$ be a function living in $C^{1,(2,2)}_b\l([0,T)\times \Mc\l(\R^d\r)\r)\cap C^{0}\l(\l[0,T\r]\times \Mc\l(\R^d\r)\r)$. 
    \begin{enumerate}[(i)]
        \item Suppose that $V$ satisfies
        \beq\label{SuperprocDPE_for_V:HJB}
            \begin{cases}
            \partial_t V\l(t,\lambda\r)\\
            +\int_{\R^d}\inf_{a\in A}H\Big(x,\lambda,a,D_\lambda V(t,\lambda,x),\\
            \qquad\qquad\qquad\qquad\partial_x D_\lambda V(t,\lambda,x),\delta_\lambda^2 V(t,\lambda,x,x)\Big)
            \lambda(dx) \leq 0,\\
            V\l(t,\lambda\r)\leq\Psi(\lambda).
            \end{cases}
        \enq      
        Then $V\l(t,\lambda\r)\leq v\l(t,\lambda\r)$ for any $\l(t,\lambda\r)\in \l[0,T\r]\times \Mc\l(\R^d\r)$, with $v$ as in \eqref{eq:value_fct}.
        \item  Moreover, suppose that $V$ satisfies \eqref{SuperprocDPE_for_V:HJB} with equality and there exists a continuous function $\hat a(t,x,\lambda)$ for $(t,x,\lambda)\in [0,T)\times\R^d\times \Mc\l(\R^d\r)$, valued in $A$ such that
        \begin{align}
        \label{SuperprocHJB:argmin}
        \begin{split}
            \hat a(t,x,\lambda) \in \arg\min_{a\in A}H\left(x,\lambda,a,D_\lambda v(t,\lambda,x),\qquad\qquad\right.
            \\
            \left.\partial_x D_\lambda v(t,\lambda,x),\delta_\lambda^2 v(t,\lambda,x,x)\right).
        \end{split}
        \end{align}
        Suppose also that the corresponding control \begin{align*}
            \alpha^* = \left\{\alpha^*_s(x) := \hat a(s,x,\mu_s), s\in[t,T)\right\} \in \Ac.
        \end{align*} Then, $V=v$, with $v$ as in \eqref{eq:value_fct}, and $\alpha^*$ is an optimal Markovian control.
    \end{enumerate}
\end{Theorem}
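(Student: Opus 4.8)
The plan is to run the classical verification argument; the only genuine subtlety is that $V$ is smooth only on $[0,T)\times\Mc\l(\R^d\r)$, so a limiting argument is needed at the terminal time. The main tool is a time-dependent version of Proposition~\ref{SuperprocProp:general_mart_pb}: for $V\in C^{1,(2,2)}_b\l([0,T)\times\Mc\l(\R^d\r)\r)$, any $\l(\P,\alpha\r)\in\Rc_{\l(t,\lambda\r)}$, and every $t\le s<T$, the process
\[
M^V_s := V\l(s,\mu_s\r) - \int_t^s\l(\partial_t V\l(r,\mu_r\r) + \int_{\R^d}\Lb V\l(r,\mu_r,x,\alpha_r(x)\r)\mu_r(dx)\r)dr
\]
is a true $(\P,\F)$-martingale. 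I would deduce this from Proposition~\ref{SuperprocProp:general_mart_pb} by the usual space-time augmentation (approximating $V$ by finite sums $\sum_i g_i(s)u_i(\lambda)$ with $g_i\in C^1$ and $u_i\in C^{2,2}_b$, or applying the formula to the process $(s,\mu_s)$); it is a genuine, not merely local, martingale because $V$ is bounded and, by boundedness of $b,\sigma,\gamma$ and of the derivatives of $V$, the drift integrand is dominated by $C\l(1+\d_{\R^d}(\mu_r,\0)\r)$, which is $\P$-integrable on $[t,T]$ by Proposition~\ref{SuperprocProp:moment_bounds:mass}. I would also record that the paths of $\mu$ are $\P$-a.s.\ continuous (the quadratic variation in~\eqref{SuperprocMartPb:diff-F_f-quad_var-superprocess} is absolutely continuous, so each $s\mapsto\langle\varphi,\mu_s\rangle$ is continuous), hence $V\l(s,\mu_s\r)\to V\l(T,\mu_T\r)$ as $s\uparrow T$ by joint continuity of $V$.

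For part (i), fix $\l(t,\lambda\r)$ and $\alpha\in\Ac$ and write $\P=\P^{t,\lambda,\alpha}$. By the definition~\eqref{SuperprocHJB:eq} of $H$ one has $\Lb V\l(r,\mu_r,x,a\r)+\psi\l(x,\mu_r,a\r)=H\l(x,\mu_r,a,D_\lambda V,\partial_x D_\lambda V,\delta_\lambda^2 V\r)$ for every $a\in A$ (the derivatives of $V$ evaluated at $(r,\mu_r,x)$, resp.\ $(r,\mu_r,x,x)$); minimizing over $a$, integrating against $\mu_r$, adding $\partial_t V\l(r,\mu_r\r)$, and using the inequality in~\eqref{SuperprocDPE_for_V:HJB} gives, for a.e.\ $(r,\omega)$,
\[
\partial_t V\l(r,\mu_r\r) + \int_{\R^d}\Lb V\l(r,\mu_r,x,\alpha_r(x)\r)\mu_r(dx) \ge -\int_{\R^d}\psi\l(x,\mu_r,\alpha_r(x)\r)\mu_r(dx).
\]
Taking expectations in the martingale identity at $s=T-\eps$ and inserting this lower bound on the drift yields
\[
\E^\P\l[V\l(T-\eps,\mu_{T-\eps}\r)\r] \ge V\l(t,\lambda\r) - \E^\P\l[\int_t^{T-\eps}\int_{\R^d}\psi\l(x,\mu_r,\alpha_r(x)\r)\mu_r(dx)dr\r].
\]
Letting $\eps\downarrow0$ — bounded convergence on the left (path-continuity and boundedness of $V$), dominated convergence on the right (by~\eqref{Superproceq:growth_cond_psi_Psi} and Proposition~\ref{SuperprocProp:moment_bounds:mass}) — and then using $V\l(T,\mu_T\r)\le\Psi\l(\mu_T\r)$ gives $V\l(t,\lambda\r)\le J(t,\lambda,\alpha)$; the infimum over $\alpha\in\Ac$ gives $V\le v$.

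For part (ii), suppose~\eqref{SuperprocDPE_for_V:HJB} holds with equality and let $\hat a$ and $\alpha^*\in\Ac$ be as in the statement. Since $\hat a(r,x,\mu_r)$ attains the pointwise minimum of $a\mapsto H\l(x,\mu_r,a,D_\lambda V,\partial_x D_\lambda V,\delta_\lambda^2 V\r)$ and~\eqref{SuperprocDPE_for_V:HJB} is an equality, every inequality in the previous paragraph becomes an equality for $\alpha=\alpha^*$ and $\P^*:=\P^{t,\lambda,\alpha^*}$. Passing $\eps\downarrow0$ and using $V\l(T,\mu_T\r)=\Psi\l(\mu_T\r)$ then gives
\[
V\l(t,\lambda\r) = \E^{\P^*}\l[\int_t^T\int_{\R^d}\psi\l(x,\mu_r,\alpha^*_r(x)\r)\mu_r(dx)dr + \Psi\l(\mu_T\r)\r] = J\l(t,\lambda,\alpha^*\r) \ge v\l(t,\lambda\r),
\]
which together with (i) forces $V\l(t,\lambda\r)=v\l(t,\lambda\r)=J\l(t,\lambda,\alpha^*\r)$; hence $\alpha^*$ is an optimal Markovian control.

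I expect the main obstacle to be the terminal-time passage $\eps\downarrow0$: since $V$ is regular only on $[0,T)\times\Mc\l(\R^d\r)$, Itô's formula is available only on $[t,T-\eps]$, and one must justify that $V\l(T-\eps,\mu_{T-\eps}\r)\to V\l(T,\mu_T\r)$ pathwise and in $L^1(\P)$ and that $\int_t^{T-\eps}\int_{\R^d}\psi\,\mu_r(dx)dr\to\int_t^{T}\int_{\R^d}\psi\,\mu_r(dx)dr$ in $L^1(\P)$. Both rest on $\P$-a.s.\ continuity of the paths of $\mu$ at $T$ and on the uniform bound of Proposition~\ref{SuperprocProp:moment_bounds:mass} for $\E^\P\big[\sup_{r\in[t,T]}\d_{\R^d}(\mu_r,\0)^2\big]$, together with the linear/quadratic growth of $\psi$ and $\Psi$ in~\eqref{Superproceq:growth_cond_psi_Psi}. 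A secondary technical point, presumably deferred to the appendix, is the precise formulation of the time-dependent Itô formula used above, which upgrades Proposition~\ref{SuperprocProp:general_mart_pb} to functions depending on both time and measure.
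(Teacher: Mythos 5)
Your overall route is the same as the paper's: upgrade Proposition \ref{SuperprocProp:general_mart_pb} to time-dependent functionals so that $V(s,\mu_s)$ minus its drift is a martingale under $\P^{t,\lambda,\alpha}$, take expectations, compare the drift with $-\int_{\R^d}\psi\,\mu_r(dx)$ via the HJB inequality, pass to the terminal time, and use the arbitrariness of $\alpha$; in (ii) the argmin selector turns the inequalities into equalities and yields $V=J(t,\lambda,\alpha^*)\ge v$, which combined with (i) gives $V=v$. Your handling of the terminal-time passage (stopping at $T-\eps$, a.s. weak* continuity of the paths, the moment bound of Proposition \ref{SuperprocProp:moment_bounds:mass} together with the growth condition \eqref{Superproceq:growth_cond_psi_Psi}) and of the integrability needed for the true martingale property is in fact more careful than the paper, which only invokes continuity of $V$ and dominated convergence.

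There is, however, a direction problem in the key step of (i), which you share (in mirror image) with the paper. Since $H(x,\lambda,\alpha_r(x),\ldots)\ge\inf_{a\in A}H(x,\lambda,a,\ldots)$, the hypothesis $\partial_t V+\int_{\R^d}\inf_{a}H\,\lambda(dx)\le 0$ as literally written in \eqref{SuperprocDPE_for_V:HJB} gives no information about $\partial_t V+\int_{\R^d}\big(\Lb V+\psi\big)(\cdot,\alpha_r(x))\,\mu_r(dx)$ for an arbitrary control: your claimed lower bound by $-\int_{\R^d}\psi(\cdot,\alpha_r(x))\mu_r(dx)$ follows only if the PDE inequality is read as $\ge 0$. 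Indeed, the literal statement ("$\le 0$" plus $V(T,\cdot)\le\Psi$ implies $V\le v$) is false: take $b=\sigma=\gamma=0$ and $\psi=0$, so that $\mu$ is frozen and $v=\Psi$, and set $V(t,\lambda)=\Psi(\lambda)+(T-t)$ for a smooth bounded $\Psi$; both hypotheses hold while $V>v$ for $t<T$. The paper's own proof commits the symmetric error (it deduces $\partial_t V+\int_{\R^d}(\Lb V+\psi)(\cdot,\alpha_r(x))\mu_r(dx)\le 0$ for every control from the inf-form inequality, and then chains the terminal inequality $V(T,\cdot)\le\Psi$ in the wrong direction), so the sign in \eqref{SuperprocDPE_for_V:HJB} is presumably a typo; your argument proves the coherent version of the theorem (PDE inequality $\ge 0$, terminal inequality $V(T,\cdot)\le\Psi$, conclusion $V\le v$), and part (ii) is unaffected since there the PDE holds with equality and $\alpha^*$ attains the infimum. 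You should state explicitly that you are using the reversed inequality (or flag the sign inconsistency); as written, the sentence "using the inequality in \eqref{SuperprocDPE_for_V:HJB}" is a non sequitur.
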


A detailed proof of this result can be found in \ref{appendix:proof-verification}.

\begin{Remark}
    Finding a classical solution to control problems is challenging, even in the finite-dimensional setting, due to the nonlinearity of the operators involved. This difficulty is here further amplified by the infinite-dimensional nature of the space of finite measures. For these reasons, viscosity solutions are typically considered in controlled problems \citep[see, $e.g.$,][]{touzi2012optimal,zhou2024viscosity,burzoni2020viscosity}. However, we leave a detailed investigation of viscosity solutions for future work. We believe that, following the approach in \citet{cox2024controlled}, the value function can be proven to be a viscosity solution to \eqref{SuperprocHJB:eq} and a comparison theorem could be established. Under a continuity condition, one can use the approximation of Dirac masses alongside the standard finite-dimensional comparison theorem and conclude the viscosity characterization.
\end{Remark}

\subsection{Example of regular solution}\label{SuperprocSubsection:branching_property}

We present an example here to demonstrate a scenario in which the HJB equation \eqref{SuperprocHJB:eq} admits a classical solution. Solving this equation in full generality is a complex and challenging problem, particularly due to the infinite-dimensional context.

By using cost functions that align with the dynamics' geometry, this example demonstrates the effective application of a branching property technique. This method, commonly used in the literature on controlled branching processes \citep[see, $e.g.$,][]{Nisio,claisse18,kharroubi2024stochastic,kharroubi2024optimal}, simplifies complex global problems into finite-dimensional optimization tasks.

Assume that there is no dependence on the measure for $b$, $\sigma$, and $\gamma$. With abuse of notation, we denote $b(x,a)$ (resp. $\sigma(x,a)$, $\gamma(x,a)$) instead of $b\l(x,\lambda,a\r)$ (resp. $\sigma\l(x,\lambda,a\r)$, $\gamma\l(x,\lambda,a\r)$). Fix $h\in C_{b}\l(\R^d\r)$ with $h(x)\geq0$ for any $x\in\R^d$, and let $\Psi(\lambda) := \exp\left( - \langle h, \lambda \rangle \right)$ and $\psi=0$. Therefore, the cost function $J$ writes as
\beq
J(t,\lambda,\alpha) &=& \E^{\P^{t,\lambda,\alpha}}\left[\exp( -\langle h, \mu_T \rangle)\right],
\label{eq:cost_fct:exp_case}
\enq
for $\l(t,\lambda\r)\in\l[0,T\r]\times \Mc\l(\R^d\r)$, and $\alpha\in\Ac$.

The following assumption will ensure that there exists a smooth solution to the HJB equation \eqref{SuperprocHJB:eq} associated with this cost function.

\begin{Assumption}\label{SuperprocAssumption:smooth_sol_HJB}
    Assume that the following conditions hold:
    \begin{enumerate} [(i)]
        \item $h\in C^3_b\l(\R^d\r)$;
        \item $\big(b, \sigma, \gamma\big)(\cdot,a)\in C^2\l(\R^d\r)$ for any $a\in A$, and $b$, $\sigma$, and $\gamma$ and their partial derivatives are bounded on $\R^d\times A$;
        \item there exists $C_\sigma>0$ such that
        \beqs
            \sigma\sigma^\top(x,a)\geq C_\sigma \Ib_d, \quad \text{ for }(x,a)\in \R^d\times A.
        \enqs
    \end{enumerate}
\end{Assumption}

\begin{Proposition}\label{Prop:verification:example}
    Suppose that Assumption \ref{SuperprocAssumption:smooth_sol_HJB} holds. Then, there exists a function $w\in C^{1,2}_b\l(\l[0,T\r]\times \R^d\r)$, such that
        \begin{align}
        \label{SuperprocDPE_for_w:exp_case}
            \begin{cases}
                -\partial_t w(t,x)\\
                \quad- \sup_{a\in A}\left\{b(x,a)^\top Dw(t,x) +\displaystyle\frac{1}{2}\text{Tr}\left(\sigma\sigma^\top(x,a)D^2w(t,x)\right) \right.
                \\
                \left.\qquad\qquad\qquad\qquad\qquad\qquad\qquad\qquad\qquad
                -  \displaystyle\frac{1}{2}\gamma(x,a) w(t,x)^2 \right\} = 0,\\
                w(T,x)= h(x).
            \end{cases}
        \end{align}      
        Moreover, we have
        \beqs
            v\l(t,\lambda\r) = \exp\left(\langle w(t, \cdot), \lambda \rangle\right),
        \enqs
        for any $\l(t,\lambda\r)\in \l[0,T\r]\times \Mc\l(\R^d\r)$, with $v$ as in \eqref{eq:value_fct}.
\end{Proposition}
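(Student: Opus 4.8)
The plan is to exploit the exponential/branching structure explicitly. First I would show that the claimed PDE \eqref{SuperprocDPE_for_w:exp_case} has a classical solution $w\in C^{1,2}_b([0,T]\times\R^d)$. Rewriting backwards in time $s=T-t$, this is a semilinear parabolic HJB equation of the form $\partial_s w = \sup_a\{\Lc^a w - \tfrac12\gamma(\cdot,a)w^2\}$ with terminal/initial data $h$; the quadratic nonlinearity in $w$ is locally Lipschitz. Under Assumption \ref{SuperprocAssumption:smooth_sol_HJB}, the operators $\Lc^a$ are uniformly elliptic (by (iii)) with bounded $C^2$ coefficients (by (ii)), and $h\in C^3_b$ (by (i)). I would first establish an a priori bound $0\le w\le \|h\|_\infty$: the lower bound follows because $h\ge0$, $w\equiv 0$ is a subsolution, and $\gamma\ge0$ makes the nonlinear term act like a damping; the upper bound $\|h\|_\infty$ is preserved since the constant $c=\|h\|_\infty$ is a supersolution (the nonlinear term $-\tfrac12\gamma c^2\le0$ helps). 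With these a priori $L^\infty$ bounds, the nonlinearity is effectively globally Lipschitz, and standard quasilinear parabolic theory (e.g. the regularity results built on Schauder/Krylov–Safonov estimates, or the Ladyzhenskaya–Solonnikov–Uraltseva framework) combined with the regularity of $h$ gives existence of a solution in $C^{1,2}_b$, with bounds on $\partial_t w$, $Dw$, $D^2 w$ inherited from the data. This step is essentially classical but must be stated carefully, and it is the technical heart of the argument.

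Next I would define the candidate $V(t,\lambda):=\exp(\langle w(t,\cdot),\lambda\rangle)$ and verify it satisfies the hypotheses of the Verification Theorem \ref{Thm:verification}. Since $w(t,\cdot)\in C^2_b(\R^d)$ with all derivatives bounded uniformly in $t$, the function $V$ is a composition $F_\varphi$-type cylindrical functional with $F=\exp$ and $\varphi=w(t,\cdot)$, so $V\in C^{1,(2,2)}_b([0,T)\times\Mc(\R^d))\cap C^0([0,T]\times\Mc(\R^d))$; in particular $V(T,\lambda)=\exp(\langle h,\lambda\rangle)=\Psi(\lambda)$. The derivatives are explicit: with $E:=\exp(\langle w(t,\cdot),\lambda\rangle)$ one has $\partial_t V = E\,\langle \partial_t w(t,\cdot),\lambda\rangle$, $\delta_\lambda V(t,\lambda,x)=E\,w(t,x)$, $D_\lambda V(t,\lambda,x)=E\,Dw(t,x)$, $\partial_x D_\lambda V(t,\lambda,x)=E\,D^2 w(t,x)$, and $\delta_\lambda^2 V(t,\lambda,x,y)=E\,w(t,x)w(t,y)$, so $\delta_\lambda^2 V(t,\lambda,x,x)=E\,w(t,x)^2$. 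Plugging into the Hamiltonian $H$ (with $\psi\equiv0$): $H(x,\lambda,a,D_\lambda V,\partial_x D_\lambda V,\delta_\lambda^2 V)= E\big(b(x,a)^\top Dw + \tfrac12\mathrm{Tr}(\sigma\sigma^\top D^2 w) + \tfrac12\gamma(x,a)w(t,x)^2\big)$. Taking $\inf_{a\in A}$ and integrating $\lambda(dx)$, then adding $\partial_t V = E\langle\partial_t w,\lambda\rangle$, the factor $E>0$ pulls out and the bracket is exactly $\lambda$-integrated version of $\partial_t w + \inf_a\{b^\top Dw + \tfrac12\mathrm{Tr}(\sigma\sigma^\top D^2 w) + \tfrac12\gamma w^2\}$, which vanishes pointwise by \eqref{SuperprocDPE_for_w:exp_case} (note $\inf_a(\cdots)=-\sup_a(-\cdots)$ matches up once one is careful with the sign convention: the PDE for $w$ has $-\sup_a\{\cdots - \tfrac12\gamma w^2\}$, which equals $\inf_a\{-\cdots + \tfrac12\gamma w^2\}$ — one must check this bracket matches the integrand). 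Hence $V$ solves the measure-valued HJB with equality.

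Then I would apply Verification Theorem \ref{Thm:verification}: part (i) gives $V\le v$, and since the equation holds with equality I need a measurable/continuous minimizer $\hat a(t,x,\lambda)$. Because the Hamiltonian integrand — after dividing by $E$ — depends on $\lambda$ only through $E$, which factors out of the $\arg\min$, the minimizer depends only on $(t,x)$ via $\hat a(t,x)\in\arg\min_{a\in A}\{b(x,a)^\top Dw(t,x)+\tfrac12\mathrm{Tr}(\sigma\sigma^\top(x,a)D^2 w(t,x))+\tfrac12\gamma(x,a)w(t,x)^2\}$; existence of a measurable selection follows from compactness of $A$ and continuity of the integrand in $a$ (measurable selection theorem / Berge), and one checks $\alpha^*_s(x)=\hat a(s,x)$ defines an admissible element of $\Ac$ (it is $\Bc(\R^d)\otimes\Fc^\mu_s$-predictable since it does not depend on $\mu$ at all, being a Borel function of $(s,x)$ — here one may need a mild continuity assumption on $\hat a$, or invoke the structure that $w\in C^{1,2}$ makes the coefficients of the minimization continuous so a continuous selection can be arranged, matching the continuity requirement in \eqref{SuperprocHJB:argmin}). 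Verification (ii) then yields $V=v$ and optimality of $\alpha^*$, i.e. $v(t,\lambda)=\exp(\langle w(t,\cdot),\lambda\rangle)$.

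The main obstacle is the first step: producing a genuinely classical ($C^{1,2}_b$) solution of the semilinear HJB equation \eqref{SuperprocDPE_for_w:exp_case} with the stated global bounds on $w$ and its derivatives. Everything after that is a clean, essentially algebraic, substitution-and-verification exercise using Theorem \ref{Thm:verification}; the only subtlety there is bookkeeping of the sign convention between the $\sup$-form of the $w$-equation and the $\inf$-form of the measure-valued HJB, and confirming the admissibility/continuity of the Markovian minimizer. For the PDE step I would either cite the quasilinear parabolic existence theory directly (uniform ellipticity + bounded smooth coefficients + a priori $L^\infty$ bound $\Rightarrow$ classical solvability, with interior-to-boundary Schauder estimates giving $C^{1,2}_b$ up to $t=T$ since the terminal datum $h$ is $C^3_b$), or construct $w$ as the value function of the finite-dimensional control problem $w(t,x)=\sup_a \E[\,\cdots]$ for an associated controlled diffusion with quadratic running cost in the state, verifying a posteriori via finite-dimensional regularity results that this value function is $C^{1,2}_b$ and solves \eqref{SuperprocDPE_for_w:exp_case}. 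I expect the former (PDE) route to be the more economical one to write down in the appendix.
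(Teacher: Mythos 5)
Your overall strategy is the same as the paper's: first produce a classical solution $w$ of the finite--dimensional Bellman PDE \eqref{SuperprocDPE_for_w:exp_case}, then plug the exponential ansatz into Theorem \ref{Thm:verification}. On the PDE step, the paper substitutes $\tilde w(t,x)=e^{-t}w(t,x)$ and invokes Krylov's existence theorem for fully nonlinear parabolic Bellman equations (Theorem 6.4.4 together with Example 6.1.8 of \citet{krylov87}, using in addition a positive lower bound on $\gamma$ to check the structural conditions). Your reference to ``quasilinear parabolic theory (LSU)'' is not the right framework, because the supremum over $a$ makes the equation fully nonlinear; however, your ingredients (comparison-principle bound $0\le w\le\|h\|_\infty$, uniform ellipticity, bounded smooth coefficients, convex Bellman structure) are exactly what the Krylov route formalizes, so this part is a matter of citing the correct theorem rather than a conceptual gap.

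The genuine gap is in the verification step, at precisely the point you flagged as ``to be checked'': with $V(t,\lambda)=\exp\l(+\langle w(t,\cdot),\lambda\rangle\r)$ the claimed cancellation does not occur. After factoring out $E>0$, your integrand is $\partial_t w+\inf_{a}\l\{b^\top Dw+\frac12\text{Tr}\l(\sigma\sigma^\top D^2w\r)+\frac12\gamma w^2\r\}$, while \eqref{SuperprocDPE_for_w:exp_case} says $\partial_t w=\inf_{a}\l\{-b^\top Dw-\frac12\text{Tr}\l(\sigma\sigma^\top D^2w\r)+\frac12\gamma w^2\r\}$: the drift/diffusion part enters with the opposite sign, and the sum is not zero (e.g.\ if the coefficients do not depend on $a$ it equals $\gamma w^2$). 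Moreover $\Psi(\lambda)=\exp\l(-\langle h,\lambda\rangle\r)$, so $V(T,\cdot)\neq\Psi$ with the plus sign, and since $w\ge 0$ the functional $\exp\l(+\langle w,\lambda\rangle\r)$ is unbounded on $\Mc\l(\R^d\r)$, so it is not in $C^{1,(2,2)}_b$ and Theorem \ref{Thm:verification} cannot be applied to it. The correct ansatz is $V(t,\lambda)=\exp\l(-\langle w(t,\cdot),\lambda\rangle\r)$: then $D_\lambda V=-V\,Dw$, $\partial_x D_\lambda V=-V\,D^2w$, $\delta_\lambda^2 V(t,\lambda,x,x)=V\,w^2$, and the HJB integrand reduces to $V\l(-\partial_t w-\sup_{a}\l\{b^\top Dw+\frac12\text{Tr}\l(\sigma\sigma^\top D^2w\r)-\frac12\gamma w^2\r\}\r)=0$, which is exactly the reduction displayed in the paper's proof, with the terminal condition and the boundedness requirements now satisfied. (The proposition as stated in the paper carries the same plus-sign slip, but the computation in its proof corresponds to the minus-sign ansatz; your write-up adopts the slip and asserts the cancellation, which is the step that fails.) The remaining points, measurable/continuous selection of $\hat a(t,x)$ over the compact set $A$ and admissibility of the resulting Markovian control, are fine and match the paper.
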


A detailed proof of this result can be found in \ref{appendix:proof-example}.

\section{Conclusion}\label{SuperprocSection:Conclusion}

This article focused on controlled superprocesses, which is, to the best of our knowledge, a novel category of processes. The first part of our study is devoted to introducing the formalism, which we presented in a weak form through a controlled martingale problem. Following this definition, we proved their existence, as weak limits of controlled $n$-rescaled branching processes, as well as their uniqueness in law, given initial condition and control process.

Taking advantage of a differential calculus developped on the space of finite measures, we proved a weak version of the Itô's formula for these dynamics. This was done through the generalization of the original martingale problem to a larger class of functions, applying the denseness properties of the class of cylindrical function. This led to the study of the control problem, establishing the non-explosion property with respect to the chosen distance that metricizes weak* topology. After proving the DPP, we derived an HJB equation on the space of measures and provided a verification theorem. Finally, we used this result to introduce a class of solvable problems. Utilizing the branching property technique, we translated the problem on finite measures to a finite-dimensional nonlinear PDE, reducing the dimensionality. We solved the latter using results from \citet{krylov87}, and, with the help of the verification theorem, we provided an explicit description of the value function.

\appendix
\section{Proofs of main results}\label{Appendix:proofs}

\subsection{Proof of Proposition \ref{Prop:existence}}\label{appendix:proof-existence}

\begin{proof}[\textbf{Step 1.}]
    Fix a control process $\alpha\in \Ac$. For $n=1$, the existence of $\P^{t,\lambda_1,\alpha;1}\in\Pc\l(\Db^d\r)$ such that $(\P^{t,\lambda_1,\alpha;1},\alpha)\in\Rc^1_{\l(t,\lambda^1\r)}$, for any $\l(t,\lambda^1\r) \in \l[0,T\r]\times \Nc^1\l[\R^d\r]$, is discussed in \citet[Section 4,][]{Ocello:rel_form_branching}. This is done for general horizons $T>0$. Existence of $\P^{t,\lambda_n,\alpha;n}\in\Pc\l(\Db^d\r)$ such that $(\P^{t,\lambda_n,\alpha;n},\alpha)\in\Rc^n_{\l(t,\lambda_n\r)}$, for any $\l(t,\lambda_n\r) \in \l[0,T\r]\times \Nc^n\l[\R^d\r]$ stems from this. We denote $\bar\Rc^S_{\l(t,\lambda^1\r)}$ the set of $1$-rescaled branching diffusion processes defined in the interval $[0,S]$, for $S>0$. For $n\in\N$, we define, on the interval $[0,nT]$, the control $\alpha^n$ such that
    \beqs
    \alpha^n_s = \alpha_{s/n}, \quad \text{ for }s \in [0,nT].
    \enqs
    Fix $\l(t,\lambda_n\r) \in \l[0,T\r]\times \Nc^n\l[\R^d\r]$. From the previous result, we have the existence of $\P^{n,1}\in\Pc\l(\D\l([0,nT];\Mc\l(\R^d\r)\r)\r)$ such that $(\P^{n,1},\alpha^n)\in\bar \Rc^{nT}_{(nt,n\lambda_n)}$. Define the map $\mathfrak{R}^n$ such that 
    \beqs
    \mathfrak{R}^n: \D(\l[0,T\r];\Nc^n\l[\R^d\r]) &\to& \D([0,nT];\Nc^1\l[\R^d\r])\\
    \left(\mu_s\right)_{s\in\l[0,T\r]} &\mapsto&  \left(n\mu_{s/n}\right)_{s\in[0,nT]}.
    \enqs
    As in \citet{Dawson,Etheridge,Roelly:convergence_measure_val_processes}, we have that $\P^{t,\lambda_n,\alpha;n}:=\P^{n,1}\circ(\mathfrak{R}^n)^{-1}\in\Pc\l(\Db^d\r)$ is such that $(\P^n, \alpha)\in \Rc^{n}_{\l(t,\lambda_n\r)}$.

    To establish uniqueness in law, we follow the lines of \citet[Proposition 4.5,][]{Ocello:rel_form_branching}. This involves extending the arguments used in the proposition the rescaled martingale problem \eqref{SuperprocMartPb:diff-F_f-branch_diff}-\eqref{SuperprocMartPb:diff-F_f-branch_diff-quad_var}.

\textbf{\textit{Step 2.}}
    Fix a control process $\alpha\in\Ac$. Consider a sequence $\left(\lambda_n\right)_{n\in\N}$ such that $\lambda_n\to\lambda$ weakly* and $\lambda_n\in\Nc^n\l[\R^d\r]$.
    From \textbf{\textit{Step 1}}, there exists $\P^n\in \Pc\l(\Db^d\r)$ such that $(\P^n,\alpha)\in\Rc^n_{\l(t,\lambda_n\r)}$. Our goal is to show $\left(\P^n\right)_{n\in\N}$ converges weakly to some $\P\in \Pc\l(\Db^d\r)$ and that $(\P,\alpha)\in\Rc_{\l(t,\lambda\r)}$.
    
    We define the projection $\pi_\varphi$ as
    \beqs
        \pi_\varphi:\Mc\l(\R^d\r)\ni \lambda\mapsto\langle \varphi, \lambda\rangle\in \R,
    \enqs
    for any $f\in C^0\l(\R^d\r)$. Clearly, the weak* topology is the weakest topology for which the mappings $\pi_{\varphi_k}$ are continuous, for $\left\{\varphi_k\right\}_{k\geq1}$ dense in $C_0\l(\R^d\r)$. Moreover, under $\P^n$, for the semimartingale $\langle\varphi_k,\mu_\cdot\rangle$, we define the predictable finite variation process as $V^n_\cdot(\varphi)$ and the increasing process of the martingale part as $I^n_\cdot(\varphi)$ for $k\geq 1$.
    From equations \eqref{SuperprocMartPb:diff-F_f-branch_diff} and \eqref{SuperprocMartPb:diff-F_f-branch_diff-quad_var}, we have 
    \beq\label{Superproceq:expression_Vn}
    V^n_s(\varphi_k) &=& \int_t^s \int_{\R^d} L \varphi_k\l(x,\mu_r,\alpha_r(x)\r) \mu_r(dx)dr,\\
    I^n_s(\varphi_k) &=& \int_t^s \int_{\R^d} \bigg( \frac{1}{n} \left|D\varphi_k(x)\sigma\l(x,\mu_r,\alpha_r(x)\r)\right|^2\label{Superproceq:expression_In}\\
    &&\phantom{\int_t^s \int_{\R^d} \bigg( \frac{1}{n}D\varphi_k(x)\sigma}
    +\gamma\l(x,\mu_r,\alpha_r(x)\r)\varphi_k^2(x)\bigg)\mu_r(dx)dr.\nonumber
    \enq
    We can now verify conditions \textit{(i)} and \textit{(ii)} of \citet[Theorem 2.3,][]{Roelly:convergence_measure_val_processes} to prove that $\left\{\P^n\right\}_n$ is tight. This means proving that
    \begin{enumerate} [(i)]
        \item $\left(\P^n\circ \pi_{\varphi_k}^{-1}\right)_{n\geq1}$ is tight for $k\geq1$;
        \item $V^n_s(\varphi_k)$ and $I^n_s(\varphi_k)$ satisfy the following condition of Aldous for any $k\geq 1$: for each stopping time $\tau$ we can find a sequence $\delta_n$ such that $\delta_n\to0$ as $n\to \infty$ and such that
        \begin{equation}\label{Superproceq:conv_Vn_In}
          \begin{split}
            \limsup_n\E^{\P_n}\left[\left|V^n_{\tau+\delta_n}(\varphi) - V^n_{\tau}(\varphi)\right|\right] = 0, \\ \limsup_n\E^{\P_n}\left[\left|I^n_{\tau+\delta_n}(\varphi) - I^n_{\tau}(\varphi)\right|\right] = 0.
          \end{split}
        \end{equation}
    \end{enumerate}
    
    From \eqref{SuperprocMartPb:diff-F_f-branch_diff}, we have that $\langle 1,\mu_\cdot \rangle$ is a $\P^n$-martingale for any $n\geq 1$. Therefore, for any $n\geq1$,
    \beqs
    \P^n\left(\sup_{s\in[t,T]}\langle 1,\mu_s\rangle>K\right)
    \leq \frac{1}{K} \E^{\P^n}\left[\langle 1,\mu_t\rangle\right]=\frac{1}{K} \langle 1,\lambda_n\rangle.
    \enqs
    Since $\lim_n\langle 1,\lambda_n\rangle=\langle 1,\lambda\rangle$, we obtain that $\sup_n \P^n\left(\sup_{s\in[t,T]}\langle 1,\mu_s\rangle>K\right)$ tends to $0$ when $K$ tends to infinity. $\left(\P^n\circ \pi_{\varphi_k}^{-1}\right)_{n\geq1}$ is also tight for $k\geq1$, since each function of $C_0\l(\R^d\r)$ is bounded. Therefore, (i) is satisfied.
    
    Fix $\varphi\in\{1\}\cup\left\{\varphi_k:k\geq1\right\}$, a stopping time $\tau$ taking values in $[t,T]$, and $\delta_n>0$.
    Using \eqref{Superproceq:expression_Vn} and \eqref{Superproceq:expression_In}, we have
    \begin{align*}
        \E^{\P_n} \left[\left|V^n_{\tau+\delta_n}(\varphi) - V^n_{\tau}(\varphi)\right|\right]
        &\leq
        \E^{\P_n} \left[\int_\tau^{\tau+\delta_n} \int_{\R^d} |L \varphi(x,\mu_r, \alpha_r(x))| \mu_r(dx)dr
        \right]
        \\
        &\leq
        \delta_n \E^{\P_n}\left[\langle 1,\mu_\tau\rangle\right]\;
        \l\|L \varphi\r\|_\infty = \delta_n \langle 1,\lambda_n\rangle \|L \varphi\|_\infty,
    \end{align*}
    where the last equality comes from the martingale property. By the same arguments, we also have
    \begin{align*}
        &\E^{\P_n}\l[
        \l|I^n_{\tau+\delta_n}(\varphi) - I^n_{\tau}(\varphi)\r|
        \r] \\
        &\quad
        =\E^{\P_n}\l[
            \int_\tau^{\tau+\delta_n} \int_{\R^d} \l(
                \frac{1}{n} \left|D\varphi(x)\sigma\l(x,\mu_r,\alpha_r(x)\r)\r|^2
                \r.\r.
        \\
        &\quad
        \l.\l.
        \phantom{ \E^{\P_n}\int_\tau^{\tau+\delta_n} \int_{\R^d} \frac{1}{n}\qquad\qquad\qquad\qquad
        }
            +\gamma\l( x,\mu_r,\alpha_r(x) \r)\varphi^2(x)\right)\mu_r(dx)dr\r]
        \\
        &\quad
    \leq \delta_n \langle 1,\lambda_n\rangle\l( \|D\varphi\sigma\|^2_\infty + 2\bar \gamma \|\varphi\|^2_\infty \r).
    \end{align*}

    Therefore, if $\lim_n\delta_n=0$, we get \eqref{Superproceq:conv_Vn_In}, which gives that $\left(\P_n\right)_{n\geq1}$ is tight in $\Db^d$.
    
    To conclude, we take a sequence $\left(\P_n\right)_{n\geq1}$ converging to a probability measure $\P \in \Pc\l(\Db^{d}\r)$ and prove that $(\P,\alpha)\in\Rc_{\l(t,\lambda\r)}$.
    To do that, we focus on the convergence of $\Lc_n$. For $(x,\nu,a)\in \R^d \times \Mc\l(\R^d\r)$, the third term in the expression of $\Lc_n$ in \eqref{Superproceq:Lc_n} is equal to
    \begin{align*}
        W_n(x,\nu,a) &=
        \gamma(x,\nu,a)\;n^2
        \l(
            \frac{1}{2}
            F \l( \l\langle\varphi,\nu\r\rangle - \frac{1}{n}\varphi(x)\r)
        \r.
        \\
        &\l.\phantom{=
        \gamma(x,\nu,a)\;n^2\frac{1}{2}\quad
        }
        +\frac{1}{2} F\l(\l\langle\varphi,\nu\r\rangle
            + \frac{1}{n}\varphi(x)\r)
        - F\l(\langle \varphi, \nu\rangle\r)\r).
    \end{align*}
    Using Taylor's development with Lagrange reminder, we have 
    \beqs
        W_n(x,\nu,a) =
        \gamma(x,\nu,a)\frac{
            F''\left(\left\langle\varphi,\nu\right\rangle + z^n_1\right)+F''\left(\left\langle\varphi,\nu\right\rangle + z^n_2\right)}{2},
    \enqs
    with $z^n_1$ (resp. $z^n_2$) a point in $\{h\langle\varphi,\nu\rangle+(1-h)\varphi(x)/n: h\in [0,1]\}$ (resp. $\{h\langle\varphi,\nu\rangle-(1-h)\varphi(x)/n: h\in [0,1]\}$). Since $\gamma$ is bounded, we have $W(x,\nu,a)=\lim_n W_n(x,\nu,a) = F''_\varphi(\nu)\gamma(x,\nu,a)\varphi^2(x)$ for any $(x,\nu,a)$. We can now prove that $F_\varphi(\mu_\cdot) - \int_t^\cdot \int_{\R^d} \Lc F_\varphi \l(x,\mu_r,\alpha_r(x)\r)\mu_r(dx)dr$ is a $\P$-martingale, $i.e.$, for each stopping time $\tau$ taking value in $[t,T]$,
    \beqs
    \E^{\P}\left[
    F_\varphi(\mu_\tau) - F_\varphi(\mu_t) - \int_t^\tau \int_{\R^d} \Lc F_\varphi \l(x,\mu_r,\alpha_r(x)\r)\mu_r(dx)dr
    \right] = 0.
    \enqs
    We have
    \beqs
    \lim_n \E^{\P_n}\left[
    F_\varphi(\mu_\tau) - F_\varphi(\mu_t)
    \right] = \E^{\P}\left[
    F_\varphi(\mu_\tau) - F_\varphi(\mu_t)
    \right],
    \enqs
    and
    \begin{align*}
        &\E^{\P}\left[\int_t^\tau \int_{\R^d} \Lc F_\varphi \l(x,\mu_r,\alpha_r(x)\r)\mu_r(dx)dr \right] 
        \\
        &\qquad\qquad\qquad\qquad\qquad-
        \E^{\P_n}\left[\int_t^\tau \int_{\R^d} \Lc_n F_\varphi \l(x,\mu_r,\alpha_r(x)\r)\mu_r(dx)dr \right]
        \\
        & = \E^{\P}\left[\int_t^\tau \int_{\R^d} \Lc F_\varphi \l(x,\mu_r,\alpha_r(x)\r)\mu_r(dx)dr \right] 
        \\
        &\qquad\qquad\qquad\qquad\qquad
        - \E^{\P_n}\left[\int_t^\tau \int_{\R^d} \Lc F_\varphi \l(x,\mu_r,\alpha_r(x)\r)\mu_r(dx)dr \right] \\
        &\qquad\qquad\qquad\qquad\qquad
        +\E^{\P_n}\left[\int_t^\tau \int_{\R^d} (\Lc-\Lc_n) F_\varphi \l(x,\mu_r,\alpha_r(x)\r)\mu_r(dx)dr \right].
    \end{align*}
    The last term on the right side satisfies
    \beqs
        &&\l| \E^{\P_n}\left[\int_t^\tau \int_{\R^d} (\Lc-\Lc_n) F_\varphi \l(x,\mu_r,\alpha_r(x)\r)\mu_r(dx)dr \right]\r|
        \\
        &&=\l| \E^{\P_n}\l[\int_t^\tau \int_{\R^d} \l(
            \frac{1}{2 n}
            F''\l(\l\langle \varphi, \mu_r\r\rangle\r)
            \l| D\varphi(x) \sigma\l(x,\mu_r,\alpha_r(x)\r)\r|^2 +
            \r.\r.\r.
        \\
        &&
        \l.\l.\l.
        \phantom{
        \E^{\P_n}[\int_t^\tau \int_{\R^d} (\frac{1}{2 n}
        \qquad
        }
        W\l(x,\mu_r,\alpha_r(x)\r) - W_n\l(x,\mu_r,\alpha_r(x)\r)\r)
        \mu_r(dx)dr \r]\r|\\
        &&\leq \frac{C}{n}\l(1+ T \l\langle 1, \lambda_n\r\rangle\r),
    \enqs
    for a constant $C$ which depends only on $F''_\varphi, \sigma, D\varphi, \gamma, \varphi$. Hence,
    \beqs
    &&\lim_n\E^{\P_n}\left[
    F_\varphi(\mu_\tau) - F_\varphi(\mu_t) - \int_t^\tau \int_{\R^d} \Lc_n F_\varphi \l(x,\mu_r,\alpha_r(x)\r)\mu_r(dx)dr
    \right] \\
    &&\quad=\E^{\P}\left[
    F_\varphi(\mu_\tau) - F_\varphi(\mu_t) - \int_t^\tau \int_{\R^d} \Lc F_\varphi \l(x,\mu_r,\alpha_r(x)\r)\mu_r(dx)dr
    \right] = 0.
    \enqs

\textbf{\textit{Step 3.}}
    Now, we focus on the uniqueness in law for the martingale problems \eqref{SuperprocMartPb:diff-F_f-branch_diff}-\eqref{SuperprocMartPb:diff-F_f-branch_diff-quad_var} and \eqref{SuperprocMartPb:diff-F_f-superprocess}-\eqref{SuperprocMartPb:diff-F_f-quad_var-superprocess}.

    Using \textit{Doob's functional representation theorem} \citep[see, $e.g.$, Lemma 1.13,][]{book:KALLENBERG-FMP}, we remark that a $\left\{ \Bc\l(\R^d\r)\otimes\Fc_s\right\}_s$-predictable process $\alpha$ from $\l[0,T\r]\times \R^d$ to $A$ boils down to be a predictable map $\mathfrak{a}$ such that
    \begin{align}
    \label{Superproceq:doob_repr_ctrl}
    \begin{split}
        \mathfrak{a} : \l[0,T\r]\times\R^d\times\Db^d &\to A\\
        \left( s,x,\left(\mu_r\right)_{r\in\l[0,T\r]} \right) &\mapsto \mathfrak{a}\left(s,x,\left(\mu_{r\wedge s}\right)_{r\in[0,T]}\right) = \alpha_s(x).
    \end{split}
    \end{align}
    The reason that each $\alpha$ can be interpreted as a map from $[0,T] \times \mathbb{R}^d \times \mathbb{D}^d$ to $A$ is that $\mathcal{F}_s$ is generated by $\left(\mu_r\right)_{r \in [0,s]}$. Furthermore, for a fixed $s\in [0,T]$, by applying the monotone class theorem \citep[see, e.g., Chapter 0.2,][]{revuz2013continuous} and the definition of $\left\{\mathcal{B}(\mathbb{R}^d) \otimes \mathcal{F}_s \right\}_s$-predictable processes, we can observe that the realization of this map depends only on $\left(\mu_{r \wedge s}\right)_{r \in [0,T]}$.
    
    The martingale problem \eqref{SuperprocMartPb:diff-F_f-superprocess}-\eqref{SuperprocMartPb:diff-F_f-quad_var-superprocess} (resp. \eqref{SuperprocMartPb:diff-F_f-branch_diff}-\eqref{SuperprocMartPb:diff-F_f-branch_diff-quad_var}) can be easily generalized to a domain that characterizes the law of processes in $\l[0,T\r]\times \Db^d$ \citep[as done in ][]{Ocello:rel_form_branching}. First, we introduce the domain of cylindrical functions $\Dc\subseteq C^0(\l[0,T\r]\times\Db^d)$ as the set of $F_{\left(f_1, \dots, f_p\right)}: [0, T]\times \Db^d \to \R$ of the form
    \beq\label{SuperprocDef:cyl_functions:time_dip}
    F_{\left(f_1, \dots, f_p\right)}(s,\x) = F\left( \langle f_1(s\wedge t_1,\cdot), \x_{s\wedge t_1}\rangle,\ldots,\langle f_p(s\wedge t_p,\cdot), \x_{s\wedge t_p}\rangle\right),
    \enq
    for $(s,\x)\in \R_+\times \Db^{d}$ and some $p\geq 1$, $t_1,\ldots,t_p \in \l[0,T\r]$, $F\in C^{2}_b\l(\R^p\r)$, and $f_1,\ldots,f_p\in C^{1,2}_b\l(\l[0,T\r]\times \R^{d}\r)$. For $f\in C^{1,2}_b\l(\l[0,T\r]\times \R^{d}\r)$ , we use the notation $L f(s,x,\lambda,a) = L f(s,\cdot)\l(x,\lambda,a\r)$. For a measurable function $\beta :\l[0,T\r]\times \R^d\to A$, we then define the operator $\mathbb{L}^\beta$ (resp. $\mathbb{L}^{\beta,n}$) on $\Dc$ by
    \begin{align*}
        &\mathbb{L}^{\beta} F_{\left(f_1, \dots, f_p\right)}(s,\x)  
        \\
        &
        =D F\left( \langle f_1(s\wedge t_{1},\cdot), \x_{s\wedge t_1}\rangle,\ldots,\langle f_p(s\wedge t_{p},\cdot), \x_{s\wedge t_p}\rangle\right)^\top {\mathfrak{L}}^{\beta}\f(s,\x)
        \\
        &\quad +  \frac{1}{2}\textrm{Tr}\Big( \left\langle \mathfrak{S}^{\beta}\f (\mathfrak{S}^{\beta}\f)^\top (s,\cdot), \x_s \right\rangle
        D^2F\big( \langle f_1(s\wedge t_{1},\cdot), \x_{s\wedge t_1}\rangle,\ldots,
        \\
        &
        \phantom{
        +  \frac{1}{2}\textrm{Tr}\Big( \left\langle \mathfrak{S}^{\beta}\f (\mathfrak{S}^{\beta}\f)^\top (s,\cdot), \x_s \right\rangle D^2F(\dots)\ldots,\qquad
        }
        \langle f_p(s\wedge t_{p},\cdot), \x_{s\wedge t_p}\rangle\big)\Big)
    \end{align*}
    \begin{align*}
        &\text{(resp.}
        \\
        &\mathbb{L}^{\beta,n} F_{\left(f_1, \dots, f_p\right)}(s,\x)  
        \\
        &
        =D F\left( \langle f_1(s\wedge t_{1},\cdot), \x_{s\wedge t_1}\rangle,\ldots,\langle f_p(s\wedge t_{p},\cdot), \x_{s\wedge t_p}\rangle\right)^\top {\mathfrak{L}}^{\beta}\f(s,\x)
        \\
        &\quad +  \frac{1}{2n}\textrm{Tr}\Big( \left\langle \mathfrak{S}^{\beta,n}\f (\mathfrak{S}^{\beta,n}\f)^\top (s,\cdot), \x_s \right\rangle
        D^2F\big( \langle f_1(s\wedge t_{1},\cdot), \x_{s\wedge t_1}\rangle,\ldots,
        \\
        &
        \phantom{
            +  \frac{1}{2}\textrm{Tr}\Big( \left\langle \mathfrak{S}^{\beta}\f (\mathfrak{S}^{\beta}\f)^\top (s,\cdot), \x_s \right\rangle D^2F(\dots)\ldots,\qquad
        }
        \langle f_p(s\wedge t_{p},\cdot), \x_{s\wedge t_p}\rangle\big)\Big)
        \\
        &
        \quad+\sum_{j=1}^p\mathds{1}_{t_{j-1}<s\leq t_j} \int_{\R^d}n^2 ~\gamma\l(x,\beta(s,x), \x_s\r)
        \\
        & \qquad \l(  \frac{1}{2}F\l(  \langle f_1(s\wedge t_1,\cdot), \x_{s\wedge t_1} \rangle,\ldots, \langle f_{j-1}(s\wedge t_{j-1},\cdot), \x_{s\wedge t_{j-1}}\rangle,\r.\r.
        \\
        &
        \phantom{
            + \f (\mathfrak{S}^{\beta}\f)^\top (s,\cdot), \x_s DF(\dots)\ldots,
        }
        \left.
        \mathfrak{G}^{n,+}_{k} f_j(s,x,\x_{s}), \ldots,\mathfrak{G}^{n,+}_{k} f_p(s,x,\x_{s})\right)
        \\
        & \qquad  ~+\frac{1}{2}F\left(  \langle f_1(s\wedge t_1,\cdot), \x_{s\wedge t_1} \rangle,\ldots, \langle f_{j-1}(s\wedge t_{j-1},\cdot), \x_{s\wedge t_{j-1}}\rangle,\r.
        \\
        &
        \phantom{
            + \f (\mathfrak{S}^{\beta}\f)^\top (s,\cdot), \x_s DF(\dots)\ldots,
        }
        \left.
        \mathfrak{G}^{n,-}_{k} f_j(s,x,\x_{s}),\ldots,\mathfrak{G}^{n,-}_{k} f_p(s,x,\x_{s})\right)\\
      &  \qquad \qquad \qquad  -F\left( \langle f_1(s\wedge t_1,\cdot), \x_{s\wedge t_1}\rangle,\ldots,\langle f_p(s\wedge t_p,\cdot), \x_{s\wedge t_p}\rangle\right)\bigg)\x_s(dx)
      \text{ ),}
    \end{align*}
    with $t_0=0$, where 
    \begin{align*}
        \mathfrak{L}^{\beta} \f(s,\x)
        &:=
        \left(\begin{array}{c}
            \mathds{1}_{s\leq t_1}
            \int_{\R^d}\partial_t f_1(s,x)+ L f_1(s,x,\x_{s},\beta(s,x)) \x_{s}(dx)\\ \vdots\\
            \mathds{1}_{s\leq t_p}
            \int_{\R^d}\partial_t f_p(s,x)+ L f_p(s,x,\x_{s},\beta(s,x)) \x_{s}(dx)
        \end{array}\right),
        \\
        \mathfrak{S}^{\beta} \f(s,x,\x)
        &:=
        \left(\begin{array}{c}
            \mathds{1}_{s\leq t_1} f_1(s,x)\sqrt{\gamma\l(x,\x_{s},\beta(s,x)\r)}
            \\ \vdots\\
            \mathds{1}_{s\leq t_p} f_p(s,x)\sqrt{\gamma\l(x,\x_{s},\beta(s,x)\r)}
        \end{array}\right),
        \\
        \mathfrak{S}^{\beta,n} \f(s,x,\x)
        &:=
        \left(\begin{array}{c}
            \mathds{1}_{s\leq t_1}\l|D f_1(s,x)\sigma\l(x,\x_{s},\beta(s,x)\r)\r|
            \\ \vdots\\
            \mathds{1}_{s\leq t_p} \l|D f_p(s,x)\sigma\l(x,\x_{s},\beta(s,x)\r)\r|
        \end{array}\right),
        \\
        \mathfrak{G}^{n,\pm}_{k} f_j (s,x,\x)
        &:=
        \langle f_j(s,\cdot), \x_{s}\rangle \pm\frac{1}{n}f_j (s,x),
    \end{align*}
    for $(s,x,\x)\in \l[0,T\r]\times \R^d\times \Db^d$. Following the language of \citet{EthierKurtz}, we call the graph of $\Dc$ the \textit{full} generator $\G$ (resp. $\G^n$), with
    \beq\label{Superproceq:def_full_generator}
        \G:= \left\{(g, \mathbb{L}^{\cdot}g): g\in \Dc\right\}
        \qquad
        \G^n:= \left\{(g, \mathbb{L}^{\cdot,n}g): g\in \Dc\right\}
        .
    \enq
    
    We define the domain $\Dc^T\subseteq C^0\l(\Mc\l(\R^d\r)\r)$ of the functions
    \beq\label{Superproceq:def_D_T}
        F_{\left(f_1, \dots, f_p\right)}(\lambda) = F\left( \langle f_1, \lambda\rangle,\ldots,\langle f_p, \lambda\rangle\right),\quad \text{ for } \lambda\in \Mc\l(\R^{d}\r),
    \enq
    for some $p\geq 1$, $F\in C^{2}_b\l(\R^p\r)$, and $f_1,\ldots,f_p\in C^{1,2}_b\l(\l[0,T\r]\times \R^{d}\r)$. These functions are embedded in $\Dc$ when we consider functions as in \eqref{SuperprocDef:cyl_functions:time_dip} such that $f_i$ does not depend on $s$ and $t_i=T$, for $i=1,\dots,p$. Therefore, with abuse of notation, we say that $\mathbb{L}$ (resp. $\mathbb{L}^{\cdot,n}$) acts on $\Dc^T$ with the obvious adjustments.
    
    Considering the canonical process $\mu\in \Db^{d}$, we have that, if $(\P,\alpha)\in \Rc_{(t, \lambda)}$ the process 
    \beq\label{SuperprocMartPb:diff-F_f-extended}
    \bar M^h_s:=h\l(\mu_{s\wedge\cdot}\r)-\int_t^s \mathbb{L}^{\alpha} h\l(\mu_{r\wedge\cdot}\r)dr,\quad \text{ for } t\leq r\leq T,
    \enq
    is a $(\P,\F)$-martingale with quadratic variation equal to
    \begin{align}
    \label{SuperprocMartPb:diff-F_f-extended-quad_var}
    \begin{split}
        \left[\bar M^h\right]_s :=& \int_t^s \textrm{Tr}\Big( \l\langle \mathfrak{S}^{\alpha}\f (\mathfrak{S}^{\alpha}\f)^\top (r,\cdot), \mu_r \r\rangle
        \\
        &
        \quad
        +DF(DF)^\top
        \big(
        \l\langle
            f_1\l(r\wedge t_{1},\cdot\r), \mu_{r\wedge t_1}\r
        \rangle,
        \ldots,
        \\
        &
        \qquad\qquad\qquad\qquad\qquad\qquad
        \l\langle
            f_p\l(r\wedge t_{p},\cdot\r), \mu_{r\wedge t_p} 
        \r\rangle\big)\Big)dr,
    \end{split}
    \end{align}
    for any $h = F_{\left(f_1, \dots, f_p\right)} \in \Dc$. Therefore, we can finally prove uniqueness in law given a fixed initial condition and control for the controlled superprocesses as follows.
    
    Therefore, following \citet[Proposition 4.5 and Theorem 4.1,][]{Ocello:rel_form_branching}, which generalize \citet[Theorem 4.1 and Theorem 4.2,][]{EthierKurtz}, there exists at most one $\P^{t,\lambda,\alpha}\in\Pc\l(\Db^d\r)$ (resp. $\P^{t,\lambda_n,\alpha;n}\in\Pc\l(\Db^d\r)$) such that $\l(\P^{t,\lambda,\alpha},\alpha\r)\in\Rc_{\l(t,\lambda\r)}$  (resp. $\l(\P^{t,\lambda_n,\alpha;n},\alpha\r)\in\Rc^n_{\l(t,\lambda_n\r)}$) for a given initial condition $\l(t,\lambda\r) \in \l[0,T\r]\times \Mc\l(\R^d\r)$ (reps. $\l(t,\lambda_n\r) \in \l[0,T\r]\times \Nc^n\l[\R^d\r]$) and control $\alpha\in\Ac$.
\end{proof}

\subsection{Proof of Proposition \ref{SuperprocProp:general_mart_pb}}\label{appendix:proof-general_mart_pb}.

\begin{proof}[\nopunct]
    $(ii)\implies(i)$: From Equation \eqref{eq:derivative_cyl_fct}, $F_\varphi\in C^{2,2}_{b}\l(\R^d\r)$ for any $F\in C^2_b\l(\R\r)$ and $\varphi\in C^2_b\l(\R^d\r)$ and equations \eqref{SuperprocMartPb:generalized} and \eqref{SuperprocMartPb:generalized-quad_var} becomes \eqref{SuperprocMartPb:diff-F_f-superprocess} and \eqref{SuperprocMartPb:diff-F_f-quad_var-superprocess} for $u=F_\varphi$.

    $(i)\implies(ii)$: If $\lambda=\0$, it is clear that $M^{u}$ is constant in time, thus a martingale with a null quadratic variation.
    Consider a starting condition $\l(t,\lambda\r)$, with $\langle1,\lambda\rangle>0$, a control $\alpha\in \Ac$ and the sequence of stopping times $\{\tau_k\}_{k\geq 1}$ as
    \beqs
        \tau_k :=
        \inf\left\{s\geq t: \langle1,\mu_s\rangle>k\right\}
        \wedge
        \inf\left\{s\geq t: \langle1,\mu_s\rangle<\frac{1}{k}\right\}.
    \enqs
    Defining $\mu^k_\cdot:=\mu_{\cdot\wedge\tau_k}$, for $\langle1,\lambda\rangle\in [1/k,k]$, we have that under $\P^{(t,\lambda,\alpha)}$, this process lives in $\Mc_k\l(\R^d\r)$ by construction. Thus, applying Lemma \ref{SuperprocLemma:density:K_k_N} and Lemma \ref{SuperprocLemma:density:D_T}, there exists a sequence $u_n\in \Dc^T$ such that $u_n \to u$ as $n \to \infty$ pointwise as well as their derivatives and there exists $C>0$ such that $\|u_n\|_{C^{2,2}_b\l(\Mc\l(\R^d\r)\r)}\leq C \|u\|_{C^{2,2}_b\l(\Mc\l(\R^d\r)\r)}$.

    Equation \eqref{eq:derivative_cyl_fct} shows how derivatives operate on cylindrical functions. Looking at \eqref{SuperprocMartPb:diff-F_f-extended} and \eqref{SuperprocMartPb:diff-F_f-extended-quad_var}, we see that for $h\in\Dc^T$ equations \eqref{SuperprocMartPb:generalized} and \eqref{SuperprocMartPb:generalized-quad_var} are satisfied if applied to $\mu^k$. We prove now that
    $$
        u(\mu^k_\cdot) - \int_t^\cdot \int_{\R^d} \Lb u \l(x,\mu_r,\alpha_r(x)\r)\mu_r(dx)dr
    $$ is a $\P^{(t,\lambda,\alpha)}$-martingale, $i.e.$, for each stopping time $\theta$ in $[t,T]$,
    \beqs
    \E^{\P^{(t,\lambda,\alpha)}}\left[
        u(\mu^k_\theta) - u(\lambda) - \int_t^\theta \int_{\R^d} \Lb u (x,\mu^k_r,\alpha_r(x))\mu^k_r(dx)dr
    \right] = 0.
    \enqs
    Since \eqref{SuperprocMartPb:generalized} is satisfied for $h\in \Dc^T$, and from the bounds on the derivatives and on the coefficients $b$, $\sigma$ and $\gamma$, we can apply the Dominated Convergence Theorem and obtain 
    \beqs
    0 &=& \lim_n \E^{\P^{(t,\lambda,\alpha)}}\left[
        u_n\left(\mu^k_\theta\right) - u_n(\lambda) - \int_t^\theta \int_{\R^d} \Lb u_n \left(x,\mu^k_r,\alpha_r(x)\right)\mu^k_r(dx)dr
    \right]\\
    &=& \E^{\P^{(t,\lambda,\alpha)}}\left[
        u\left(\mu^k_\theta\right) - u(\lambda) - \int_t^\theta \int_{\R^d} \Lb u\left(x,\mu^k_r,\alpha_r(x)\right)\mu^k_r(dx)dr
    \right].
    \enqs
    By definition of the quadratic variation and \eqref{SuperprocMartPb:generalized-quad_var} applied to $u_n\in \Dc^T$, we have for $n\in \N$ that
    \begin{equation}
        \label{SuperprocMartPb:generalized-quad_var:u_n}
          \begin{split}
            &\E^{\P^{(t,\lambda,\alpha)}}\left[\left(
                u_n\left(\mu^k_s\right) - u_n(\lambda) - \int_t^s \int_{\R^d} \Lb u_n \left(x,\mu^k_r,\alpha_r(x)\right)\mu^k_r(dx)dr
                \right)^2\right]\\
            &= \E^{\P^{(t,\lambda,\alpha)}}\left[
            \int_t^s \gamma\left(x,\mu^k_r,\alpha_r(x)\right)\left|\delta_\lambda u_n\left(\mu^k_r,x\right)\right|^2  \mu^k_r(dx)dr
          \right].
        \end{split}
      \end{equation}
    Therefore, we apply again Dominated Convergence Theorem again and obtain \eqref{SuperprocMartPb:generalized-quad_var:u_n} with respect to $u$. 
    
    Finally, we can remove the localization using the Dominated Convergence Theorem since $u\in C^{2,2}_{b}\l(\R^d\r)$ and the bound \eqref{Superproceq:moment_bound:mass}.
\end{proof}

\subsection{Proof of Theorem \ref{Thm:verification}}\label{appendix:proof-verification}
\begin{proof}[\nopunct]

\textit{(i)}
    Since $V\in C^{1,(2,2)}_b\l([0,T)\times \Mc\l(\R^d\r)\r)$, we have for all $\l(t,\lambda\r)\in [0,T)\times \Mc\l(\R^d\r)$, and $\alpha\in\Ac$, by \eqref{SuperprocMartPb:generalized}, the process
    \beqs
    V(s,\mu_s) - V\l(t,\lambda\r) - \int_t^s \partial_t V(u,\mu_r) + \int_{\R^d} \Lb V(u,x,\mu_r,\alpha_r(x))\mu_r(dx)dr
    \enqs
    is a martingale under $\P^{t,\lambda,\alpha}$. By taking the expectation, we get
    \beqs
        &&\E^{\P^{t,\lambda,\alpha}} \left[V(s,\mu_s)\right] 
        \\
        &&= V\l(t,\lambda\r) + \E^{\P^{t,\lambda,\alpha}} \left[\int_t^s \partial_t V(u,\mu_r) + \int_{\R^d} \Lb V(u,x,\mu_r,\alpha_r(x))\mu_r(dx)dr\right].
    \enqs
    Since $V$ satisfies \eqref{SuperprocDPE_for_V:HJB}, we have
    \begin{align*}
        \begin{split}
            \partial_t V(u,\mu_r) + \int_{\R^d} \Lb V(u,x,\mu_r,\alpha_r(x))+ \psi\l(x,\mu_r,\alpha_r(x)\r)\mu_r(dx) \leq 0,
            \\ \P^{t,\lambda,\alpha}-\text{a.s.}
        \end{split}
    \end{align*}
    for any $\alpha\in \Ac$. Therefore,
    \begin{align*}
        \begin{split}
            \E^{ \P^{t,\lambda,\alpha} } \left[V(s,\mu_s)\right] \leq V\l(t,\lambda\r) - \E^{\P^{t,\lambda,\alpha}} \left[\int_t^s \int_{\R^d} \psi\l(x,\mu_r,\alpha_r(x)\r)\mu_r(dx)dr\right],
            \\ \P^{t,\lambda,\alpha}-\text{a.s.}
        \end{split}
    \end{align*}
    for any $\alpha\in \Ac$. Since $V$ is continuous on $\l[0,T\r]\times \Mc\l(\R^d\r)$, we obtain by the dominated convergence theorem and \eqref{SuperprocDPE_for_V:HJB}
    \begin{align*}
        \begin{split}
            \E^{\P^{t,\lambda,\alpha}} \left[\Psi(\mu_T)\right] \leq V\l(t,\lambda\r) - \E^{\P^{t,\lambda,\alpha}} \left[\int_t^T \int_{\R^d} \psi\l(x,\mu_r,\alpha_r(x)\r)\mu_r(dx)dr\right],
            \\
            \P^{t,\lambda,\alpha}-\text{a.s.}
    \end{split}
    \end{align*}
    for any $\alpha\in \Ac$. From the arbitrariness of the control, we deduce that $V\l(t,\lambda\r)\leq v\l(t,\lambda\r)$, for all $\l(t,\lambda\r)\in\l[0,T\r]\times \Mc\l(\R^d\r)$.

\textit{(ii)}
    By \eqref{SuperprocMartPb:generalized},
    \begin{align*}
        &\E^{\P^{t,\lambda,\alpha}} \left[V(s,\mu_s)\right]
        \\
        &= V\l(t,\lambda\r) + \E^{\P^{t,\lambda,\alpha}} \left[\int_t^s \partial_t V(u,\mu_r) + \int_{\R^d} \Lb V(u,x,\mu_r,\alpha_r(x))\mu_r(dx)dr\right].
    \end{align*}
    By definition of $\hat a(t,x,\lambda)$, we have
    \beqs
    \partial_t V\l(t,\lambda\r) + \int_{\R^d}\Lb V(t,\lambda, x,\hat a(t,x,\lambda)) +\psi(x,\lambda,\hat a(t,x,\lambda))\lambda(dx) = 0,
    \enqs
    and so
    \beqs
        \E^{ \P^{t,\lambda,\alpha^*} } \left[V(s,\mu_s)\right] = V\l(t,\lambda\r) - \E^{\P^{t,\lambda,\alpha}} \left[
            \int_t^s \int_{\R^d}\psi(x,\lambda,\alpha^*_r(x,\mu_r))\mu_r(dx)dr
        \right].
    \enqs
    By sending $s$ to $T$ , we then obtain
    \beqs
        V\l(t,\lambda\r) = \E^{\P^{t,\lambda,\alpha^*}} \left[\Psi(\mu_T)+ \int_t^T \int_{\R^d}\psi(x,\lambda,\alpha^*_r(x,\mu_r))\mu_r(dx)dr\right] = J(t,\lambda,\alpha^*),
    \enqs
    which shows that $V\l(t,\lambda\r)=J(t,\lambda,\alpha^*)\geq v\l(t,\lambda\r)$. Therefore, $V=v$ and $\alpha^*$ is an optimal Markovian control.
\end{proof}

\subsection{Proof of Proposition \ref{Prop:verification:example}} \label{appendix:proof-example}
\begin{proof}[\nopunct]
    Our goal is to determine the function $w$ using \citet[Theorem 6.4.4,][]{krylov87}, which guarantees the existence of smooth solutions for a certain category of fully nonlinear partial differential equations. First, we need to modify \eqref{SuperprocDPE_for_w:exp_case} to fall into this class. If $w$ satisfies \eqref{SuperprocDPE_for_w:exp_case}, we see that the function $\tilde w(t,x):= e^{-t}w(t,x)$ satisfies the following nonlinear PDE
    \beq\label{SuperprocDPE_for_w_tilde:exp_case}
        \begin{cases}
            -\partial_t \tilde w(t,x) - \sup_{a\in A}\bigg\{b(x,a)^\top D\tilde w(t,x) +\displaystyle\frac{1}{2}\text{Tr}\left(\sigma\sigma^\top(x,a)D^2\tilde w(t,x)\right)\\
            \phantom{-\partial_t w(t,x) - \sup_{a\in A}\bigg\{ }
            - \frac{1}{2}\gamma(x,a) e^t \tilde w(t,x)^2 + \tilde w(t,x)\bigg\} = 0,
            \\
            \tilde w(T,x)= e^{-T} h(x).
        \end{cases}
    \enq
    Let $C_\gamma>0$ be a constant such that
    \beqs
    \gamma(x,a)\geq C_\gamma, \quad \text{ for all }(x,a)\in \R^d\times A.
    \enqs
    Without loss of generality, we can take $C_\gamma$ to be such that $\frac{C_\gamma e^T}{4}>1$. This means that
    \beqs
        &&\frac{1}{2}\gamma(x,a) e^t M_0^2 - M_0\leq \frac{C_\gamma e^T}{2}M_0^2 - M_0 \leq -\delta_0,\\
        &&\frac{1}{2}\gamma(x,a) e^t M_0^2 + M_0\geq M_0 \geq \delta_0,
    \enqs
    for all $(x,a)\in \R^d\times A$, with $M_0:= \frac{4}{C_\gamma e^T}>0$ and $\delta_0:= M_0^2$. Combining these inequalities with Assumption \ref{SuperprocAssumption:smooth_sol_HJB}, the property that equation \eqref{SuperprocDPE_for_w_tilde:exp_case} belongs to the class of \citet[Theorem 6.4.4,][]{krylov87} follows from \citet[Example 6.1.8,][]{krylov87}. Therefore, this theorem ensures that there exists $\tilde w\in C^{1,2}_b\l(\l[0,T\r]\times \R^d\r)$ solution to \eqref{SuperprocDPE_for_w_tilde:exp_case}. Thus, $w(t,x):= e^{t}\tilde w(t,x)$ is a bounded solution of \eqref{SuperprocDPE_for_w:exp_case} belonging to $ C^{1,2}_b\l(\l[0,T\r]\times \R^d\r)$.

    We conclude this proposition by applying Theorem \ref{Thm:verification}. Define $V\l(t,\lambda\r):= \exp\left(\langle w(t, \cdot), \lambda \rangle\right)$. Using the terminal condition of $w$, we see that $V\l(t,\lambda\r)=\Psi(\lambda)$. Moreover, \eqref{SuperprocHJB:eq} in this setting writes as
    \begin{align*}
        &\exp\l(\l\langle w(t, \cdot), \lambda \r\rangle\r)\int_{\R^d} \l(-\partial_t w(t,x) \phantom{\frac{1}{2}}\r.
        \\
        &\qquad\qquad\qquad\qquad- \sup_{a\in A}\l\{b(x,a)^\top D w(t,x)
        +\frac{1}{2}\text{Tr}\l(\sigma\sigma^\top(x,a)D^2 w(t,x)\r)
         \r.
         \\
         &\qquad\qquad\qquad\qquad\qquad\qquad\qquad\qquad\qquad\l.\l.
        - \frac{1}{2}\gamma(x,a) w(t,x)^2\r\}\r)\lambda(dx) = 0.
    \end{align*}
    It is now clear that $V$ satisfies \eqref{SuperprocHJB:eq} since $w$ satisfies \eqref{SuperprocDPE_for_w:exp_case}. Moreover, the optimal control $\hat a$, defined as in \eqref{SuperprocHJB:argmin}, is the point that reaches the maximum over a compact set of a continuous function. This means that $\hat a$ can be chosen continuously, thus predictably. Therefore, its associated optimal control belongs to $\Ac$. We can now apply Theorem \ref{Thm:verification} and conclude.
\end{proof}

\section{Denseness properties of the cylindrical functions}\label{Appendix:denseness-cyl-fct}

This section is a reminder of fundamental theorems that show the denseness properties of the class of cylindrical functions and follows the procedure outlined in \cite{martini2023kolmogorov}, which shares similar techniques to the ones outlined in \citet{guo2023ito,cox2024controlled}. First, we restrict ourselves in a compact space, prove the denseness of $\Dc^T$ as defined in \eqref{Superproceq:def_D_T} in $C^{2,2}\l(\Mc\l(\R^d\r)\r)$ and conclude with a localization argument.

In particular, as in \citet[Example 2.9,][]{martini2023kolmogorov}, we have that $\Dc^T\subseteq C^{2,2}\l(\Mc\l(\R^d\r)\r)$, where $\Dc^T$ is the domain of cylindrical functions as in \eqref{Superproceq:def_D_T}.
    In particular, it holds that
    \begin{align}\label{eq:derivative_cyl_fct}
    \begin{split}
        \delta_\lambda h(\lambda,x) &= DF\left( \langle f_1,\lambda\rangle,\ldots,\langle f_p, \lambda\rangle\right)^\top \f(x),
        \\
        \delta_\lambda^2 h(\lambda,x,y) &= \f(y)^\top DF^2\left( \langle f_1,\lambda\rangle,\ldots,\langle f_p, \lambda\rangle\right) \f(x),
        \\
        D_\lambda h(\lambda,x) &= DF\left( \langle f_1,\lambda\rangle,\ldots,\langle f_p, \lambda\rangle\right)^\top D\f(x),
    \end{split}
    \end{align}
    with $\f(x) := \left(f_1, \dots, f_p\right)(x)^\top$, and $D\f(x) = \left(Df_1, \dots, Df_p\right)(x)^\top$, for  $h = F_{\left(f_1, \dots, f_p\right)} \in \Dc^T$.

\paragraph{A compact set}
We work into a compact set. This is done to apply the Stone-Weierstrass theorem and prove the denseness of cylindrical functions. Consider the set of compact rectangles $\left\{K_N:=[-N,N]^d\right\}_{N\geq 1}\subseteq\R^d$. For every $k,N\in\N$, we define the
\beqs
    \Mc_k\l(\R^d\r)&:=& \left\{
    \lambda\in \Mc\l(\R^d\r):\lambda\l(\R^d\r)\in \left[\frac{1}{k},k\right]\right\},\\
    \Kc^k_N &:=& \left\{
    \lambda\in \Mc_k\l(\R^d\r):\text{supp}(\lambda)\subseteq K_N\right\}.
\enqs
These sets are non-empty. In particular, $\Kc^k_N$ is compact for the weak* topology for any $k,N\in \N$. Indeed, this space is homeomorphic to $\Kc^{k,1}_N\times \left[\frac{1}{k},k\right]$ with $\Kc^{k,1}_N:= \Kc^k_N\cap \Pc\l(\R^d\r)$ with the homeomorphism
\beqs
    \mathfrak{H}:\Mc\l(\R^d\r)\backslash \{\0\}&\to& \Pc\l(\R^d\r)\times \R_+\\
    \lambda &\mapsto& \left(\frac{1}{\lambda\l(\R^d\r)} \lambda,\lambda\l(\R^d\r)\right)
    .
\enqs
The set $\Kc^{k,1}_N$ is weakly* precompact, using Prokhorov's theorem \citep[see, $e.g.$, Theorem 1.6.1,][]{book:Billingsley}, and is closed as any limit point of the sequence in $\Kc^{k,1}_N$ also has support contained in $K_N$. Therefore, $\Kc^k_N$ is compact as homeomorphic to the product of two compact sets.

\paragraph{Cutting-off strictly positive measures}
Given $\lambda\in \Mc_k\l(\R^d\r)$, we denote with $\rho^N \mu$ the measure such that $\frac{d\rho^N\lambda}{d\lambda} = \rho^N$, with $\rho^N$ a positive and smooth cut-off function equal to $1$ in $K_N$ and
identically zero outside $K_{N+1}$. We observe that $\rho^N\lambda$ is always in $\Kc^k_{N+1}$. Thus, we set
\beq\label{Superproceq:approx_r}
u^N(\lambda) := u(\rho^N \lambda), \quad \text{ for } \lambda\in \Mc_k\l(\R^d\r), N \geq 1.
\enq
With these notations, we give \citet[Lemma 3.3,][]{martini2023kolmogorov} that proves the first approximation theorem for functions on $\Mc_k\l(\R^d\r)$.

\begin{Lemma}\label{SuperprocLemma:density:K_k_N}
    Fix $k\geq 1$ and $u\in C^{2,2}\l(\Mc_k\l(\R^d\r)\r)$. Let $\big\{u^N\big\}_{N\geq 1}$ be the sequence defined by \eqref{Superproceq:approx_r}. Then, $u^N(\lambda) \to u(\lambda)$ as $n\to\infty$, for every $\lambda\in \Mc_k\l(\R^d\r)$, and we have that the sequences $\big\{\delta_\lambda u^N\big\}_{N\geq 1}$, $\big\{\delta_\lambda^2 u^N\big\}_{N\geq 1}$, $\big\{D_\lambda u^N\big\}_{N\geq 1}$, and $\big\{\partial_x D_\lambda u^N\big\}_{N\geq 1}$ pointwise converge to the respective derivatives of $u$. Moreover, $\|u^N\|_\infty \leq \|u\|_\infty$, and there exists $C > 0$ independent of $u$, $N$, and $k$ such that
    \beqs
        \|\delta_\lambda u^N\|_\infty &\leq& \|\delta_\lambda u\|_\infty,\\
        \|\delta_\lambda^2 u^N\|_\infty &\leq& \|\delta_\lambda^2 u\|_\infty,\\
        \|D_\lambda u^N\|_\infty &\leq& C\left(\|D_\lambda u\|_\infty +\|\delta_\lambda u\|_\infty\right),
        \\
        \|\partial_x D_\lambda u^N\|_\infty &\leq& C\left(\|D_\lambda u\|_\infty +\|\delta_\lambda u\|_\infty + \|\partial_x D_\lambda u\|_\infty \right).
    \enqs
\end{Lemma}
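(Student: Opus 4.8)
The plan is to write $u^N=u\circ R_N$, where $R_N\colon\lambda\mapsto\rho^N\lambda$ is a map that is \emph{linear} in $\lambda$, and to push each derivative of $u$ through $R_N$ by an elementary chain rule; the resulting closed-form expressions for $\delta_\lambda u^N,\ \delta_\lambda^2 u^N,\ D_\lambda u^N$ and $\partial_x D_\lambda u^N$ then yield both the uniform bounds and the pointwise convergence at once. First I would fix, once and for all, a convenient cut-off family: pick $\phi\in C^\infty(\R;[0,1])$ with $\phi\equiv1$ on $(-\infty,0]$ and $\phi\equiv0$ on $[1,\infty)$, and set $\rho^N(x):=\prod_{i=1}^d\phi(|x_i|-N)$. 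For $N\ge1$ this is a bounded smooth function, equal to $1$ on $K_N$ and vanishing off $K_{N+1}$, with $\rho^N\uparrow1$ pointwise. Crucially, there is a constant $C_d$ depending only on $d$, $\|\phi'\|_\infty$, $\|\phi''\|_\infty$ — in particular not on $N$ — with $\|D\rho^N\|_\infty+\|D^2\rho^N\|_\infty\le C_d$, while for every fixed $x$ one has $D\rho^N(x)\to0$ and $D^2\rho^N(x)\to0$ as $N\to\infty$.

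The chain rule is then purely mechanical. Since $\rho^N\bigl(t\lambda+(1-t)\lambda'\bigr)=t\,\rho^N\lambda+(1-t)\,\rho^N\lambda'$ and $\bigl(\rho^N\lambda-\rho^N\lambda'\bigr)(dx)=\rho^N(x)(\lambda-\lambda')(dx)$, substituting $\rho^N\lambda$ and $\rho^N\lambda'$ into the integral representation defining $\delta_\lambda u$ gives
\[
u^N(\lambda)-u^N(\lambda')=\int_0^1\int_{\R^d}\delta_\lambda u\bigl(\rho^N(t\lambda+(1-t)\lambda'),x\bigr)\,\rho^N(x)\,(\lambda-\lambda')(dx)\,dt ,
\]
hence $\delta_\lambda u^N(\lambda,x)=\rho^N(x)\,\delta_\lambda u(\rho^N\lambda,x)$; this version already obeys the normalization convention, because $\int_{\R^d}\rho^N(x)\delta_\lambda u(\rho^N\lambda,x)\lambda(dx)=\int_{\R^d}\delta_\lambda u(\rho^N\lambda,x)(\rho^N\lambda)(dx)=0$. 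Iterating, $\delta_\lambda^2 u^N(\lambda,x,y)=\rho^N(x)\rho^N(y)\,\delta_\lambda^2 u(\rho^N\lambda,x,y)$, and differentiating the first two identities in the spatial variable by the product rule gives
\[
D_\lambda u^N(\lambda,x)=\rho^N(x)\,D_\lambda u(\rho^N\lambda,x)+\delta_\lambda u(\rho^N\lambda,x)\,D\rho^N(x),\qquad \partial_x D_\lambda u^N(\lambda,x)=\rho^N(x)\,\partial_x D_\lambda u(\rho^N\lambda,x)+r_N(\lambda,x),
\]
where $r_N$ collects the lower-order terms, each carrying a factor $D\rho^N$ or $D^2\rho^N$, so that $|r_N(\lambda,x)|\le 2\|D\rho^N\|_\infty\|D_\lambda u\|_\infty+\|D^2\rho^N\|_\infty\|\delta_\lambda u\|_\infty$ and $r_N(\lambda,x)\to0$ for each fixed $(\lambda,x)$.

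Both conclusions now read off these formulas. For the bounds, $\|\rho^N\|_\infty\le1$ immediately gives $\|u^N\|_\infty\le\|u\|_\infty$, $\|\delta_\lambda u^N\|_\infty\le\|\delta_\lambda u\|_\infty$, $\|\delta_\lambda^2 u^N\|_\infty\le\|\delta_\lambda^2 u\|_\infty$, and using $\|D\rho^N\|_\infty,\|D^2\rho^N\|_\infty\le C_d$ in the last two identities yields $\|D_\lambda u^N\|_\infty\le\|D_\lambda u\|_\infty+C_d\|\delta_\lambda u\|_\infty$ and $\|\partial_x D_\lambda u^N\|_\infty\le\|\partial_x D_\lambda u\|_\infty+2C_d\|D_\lambda u\|_\infty+C_d\|\delta_\lambda u\|_\infty$, i.e.\ the stated estimates with a constant $C$ depending only on $C_d$, hence independent of $u$, $N$ and $k$. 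For the pointwise convergence, dominated convergence (each $\varphi\in C_b(\R^d)$ is bounded by its sup-norm against the finite measure $\lambda$) shows $\rho^N\lambda\to\lambda$ in the weak* topology for each fixed $\lambda$; since $u$ and its derivatives $\delta_\lambda u,\delta_\lambda^2 u,D_\lambda u,\partial_x D_\lambda u$ are continuous for the product topology, each of $u(\rho^N\lambda)$, $\delta_\lambda u(\rho^N\lambda,x)$, $\delta_\lambda^2 u(\rho^N\lambda,x,y)$, $D_\lambda u(\rho^N\lambda,x)$, $\partial_x D_\lambda u(\rho^N\lambda,x)$ converges to the corresponding quantity at $\lambda$, and inserting this together with $\rho^N(x)\to1$, $D\rho^N(x)\to0$, $D^2\rho^N(x)\to0$ into the four displayed identities proves the claimed pointwise convergence of $u^N$ and its derivatives.

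There is no genuine analytic obstacle here; the lemma is exactly this chain-rule computation, and the only point that needs attention is domain bookkeeping — one must check that $\rho^N\lambda$, and the intermediate measures $\rho^N(t\lambda+(1-t)\lambda')$ appearing in the integral representation, lie in the set on which $u$ and its linear derivatives are defined, which is precisely the observation recorded just before the statement that $\rho^N\lambda\in\Kc^k_{N+1}$, and that the linear-derivative representation of $u$ may legitimately be invoked along the segment joining $\rho^N\lambda$ to $\rho^N\lambda'$. Granting this, the argument reduces to the elementary identities above.
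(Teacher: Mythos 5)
Your argument is correct, but it is worth noting that the paper does not actually prove this lemma: it imports it verbatim as Lemma 3.3 of \citet{martini2023kolmogorov}, so what you have written is a self-contained reconstruction of the cited proof rather than a parallel to an argument in the text. The core of your reconstruction is exactly the right mechanism — the map $\lambda\mapsto\rho^N\lambda$ is linear and maps segments to segments, so the defining integral representation of the linear derivative transports directly, giving $\delta_\lambda u^N(\lambda,x)=\rho^N(x)\,\delta_\lambda u(\rho^N\lambda,x)$, $\delta^2_\lambda u^N(\lambda,x,y)=\rho^N(x)\rho^N(y)\,\delta^2_\lambda u(\rho^N\lambda,x,y)$, and the product-rule formulas for $D_\lambda u^N$ and $\partial_x D_\lambda u^N$; the bounds and the pointwise limits (via $\rho^N\lambda\to\lambda$ weakly* and continuity of the derivatives) then follow as you say. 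Two points deserve emphasis. First, the $N$- and $k$-independence of the constant $C$ is not automatic for an arbitrary smooth cut-off equal to $1$ on $K_N$ and $0$ off $K_{N+1}$: it requires choosing the family so that $\|D\rho^N\|_\infty+\|D^2\rho^N\|_\infty$ is bounded uniformly in $N$, which your translated-profile construction provides and which is the one genuinely quantitative ingredient; making this explicit is an improvement over the paper's terse setup. Second, your "domain bookkeeping" caveat is real but inherited from the paper: $\rho^N\lambda$ can have total mass below $1/k$ (even be the zero measure) when $\lambda$ charges the complement of $K_{N+1}$, so the assertion that $\rho^N\lambda\in\Kc^k_{N+1}$ is not literally true for all $\lambda\in\Mc_k\l(\R^d\r)$; this is harmless in the paper's application, where the function being approximated in the proof of Proposition 4.2 is defined on all of $\Mc\l(\R^d\r)$, so your chain-rule identities remain valid there, but a fully rigorous statement should either extend $u$ off $\Mc_k\l(\R^d\r)$ or restrict to measures with enough mass in $K_N$.
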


This result allows us to approximate functions in $C^{2,2}\l(\Mc_k\l(\R^d\r)\r)$ with functions in $C^{2,2}\l(\Kc^k_N\r)$, for $k,N\in\N$. We can now adapt \citet[Lemma 3.4,][]{martini2023kolmogorov} showing that the domain of cylindrical functions $\Dc^T$ is dense in $C^{2,2}\l(\Kc^k_N\r)$.

\begin{Lemma}\label{SuperprocLemma:density:D_T}
    Fix $k\geq 1$ and $N\geq 1$. Let $u$ be in $C^{2,2}\l(\Kc^k_N\r)$. There exists a sequence of cylindrical functions $\left\{u_n\right\}_{n\geq1}\subseteq \Dc^T$ such that $u_n(\lambda) \to u(\lambda)$ as $n \to \infty$, for every $\lambda\in\Mc\l(\R^d\r)$, and $\left\{\delta_\lambda u_n\right\}_{n\geq1}$, $\left\{\delta_\lambda^2 u_n \right\}_{n\geq1}$, $\left\{D_\lambda u_n \right\}_{n\geq1}$, and $\left\{\partial_x D_\lambda u_n \right\}_{n\geq1}$ converge pointwise to the respective derivatives of $u$ for any $\lambda\in \Kc^k_N$.  Moreover, $\| u_n\|_\infty \leq \|u\|_\infty$,  and the same holds for the derivatives, up to a multiplicative constant independent of $u$, $N$ and $k$.
\end{Lemma}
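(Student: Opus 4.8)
The plan is to build each $u_n$ by discretising the measure argument against a smooth partition of unity, so that $u_n$ becomes an explicit element of $\Dc^T$ whose derivatives can be compared with those of $u$ via \eqref{eq:derivative_cyl_fct}. Fix $k,N\geq 1$ and $u\in C^{2,2}\l(\Kc^k_N\r)$. For each $n$ I would choose a nonnegative partition of unity $\{\rho_i^n\}_{i=1}^{m_n}\subseteq C^2_b\l(\R^d\r)$ subordinate to a covering of a neighbourhood of $K_N$ by cells of diameter at most $\delta_n\to 0$, with uniformly bounded overlap, scale-respecting bounds $\|D^j\rho_i^n\|_\infty\leq C\delta_n^{-j}$ for $j=0,1,2$, and, crucially, the \emph{quasi-interpolation} property: there are nodes $x_i^n\in\text{supp}\,\rho_i^n\cap K_N$ with $\sum_i p(x_i^n)\rho_i^n(y)=p(y)$ for every $y$ near $K_N$ and every polynomial $p$ of degree at most $2$. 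Such families are classical (for instance suitably rescaled tensor B-splines with Greville abscissae as nodes). Then I set
\[
\Phi_n(\theta):=\sum_{i=1}^{m_n}\theta_i\delta_{x_i^n},\qquad F_n(\theta):=u\l(\Phi_n(\theta)\r),\qquad u_n(\lambda):=F_n\l(\langle\rho_1^n,\lambda\rangle,\dots,\langle\rho_{m_n}^n,\lambda\rangle\r).
\]

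Since $\theta\mapsto\Phi_n(\theta)$ is affine and $u\in C^{2,2}$, the chain rule for the linear derivative gives $\partial_i F_n(\theta)=\delta_\lambda u\l(\Phi_n(\theta),x_i^n\r)$ and $\partial^2_{ij}F_n(\theta)=\delta^2_\lambda u\l(\Phi_n(\theta),x_i^n,x_j^n\r)$, so $F_n$ is $C^2$ on its natural domain; a Whitney-type extension preserving the sup norms makes $F_n\in C^2_b\l(\R^{m_n}\r)$ and hence $u_n\in\Dc^T$ (take the $f_j=\rho_j^n$ time-independent and $t_j=T$). For $\lambda\in\Kc^k_N$, the atomic measure $\nu_\lambda^n:=\sum_i\langle\rho_i^n,\lambda\rangle\delta_{x_i^n}$ --- which is $\Phi_n$ evaluated at the moment vector of $\lambda$, so that $u_n(\lambda)=u(\nu_\lambda^n)$ --- has total mass $\langle 1,\lambda\rangle\in[1/k,k]$ and support in $K_N$, hence lies in $\Kc^k_N$; since $\rho_i^n\geq 0$ and $\sum_i\rho_i^n=1$ there, this yields $\|u_n\|_\infty\leq\|u\|_\infty$, and $\nu_\lambda^n\to\lambda$ weakly* gives $u_n(\lambda)\to u(\lambda)$.

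By \eqref{eq:derivative_cyl_fct} one has $\delta_\lambda u_n(\lambda,x)=\sum_i\delta_\lambda u(\nu_\lambda^n,x_i^n)\rho_i^n(x)$, $\delta^2_\lambda u_n(\lambda,x,y)=\sum_{i,j}\delta^2_\lambda u(\nu_\lambda^n,x_i^n,x_j^n)\rho_i^n(x)\rho_j^n(y)$, $D_\lambda u_n(\lambda,x)=\sum_i\delta_\lambda u(\nu_\lambda^n,x_i^n)D\rho_i^n(x)$ and $\partial_x D_\lambda u_n(\lambda,x)=\sum_i\delta_\lambda u(\nu_\lambda^n,x_i^n)D^2\rho_i^n(x)$. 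The first two are convex combinations, so $\|\delta_\lambda u_n\|_\infty\leq\|\delta_\lambda u\|_\infty$, $\|\delta^2_\lambda u_n\|_\infty\leq\|\delta^2_\lambda u\|_\infty$, and uniform continuity together with $x_i^n\to x$ on the relevant cells and $\nu_\lambda^n\to\lambda$ gives their pointwise convergence to $\delta_\lambda u(\lambda,x)$ and $\delta^2_\lambda u(\lambda,x,y)$. For $D_\lambda u_n$ and $\partial_x D_\lambda u_n$ I would plug into the node the Taylor expansion of $z\mapsto\delta_\lambda u(\nu_\lambda^n,z)$ around $x$ and use the identities obtained by differentiating $\sum_i p(x_i^n)\rho_i^n\equiv p$ for constant, affine and quadratic $p$: these make the zeroth- and first-order terms collapse and the second-order term reproduce $D_\lambda u(\nu_\lambda^n,x)=\partial_x\delta_\lambda u(\nu_\lambda^n,x)$, resp.\ $\partial_x D_\lambda u(\nu_\lambda^n,x)=\partial^2_x\delta_\lambda u(\nu_\lambda^n,x)$, while the remainder is controlled using $|x_i^n-x|\leq C\delta_n$, $\|D^j\rho_i^n\|_\infty\leq C\delta_n^{-j}$, bounded overlap and the uniform continuity of $\partial^2_x\delta_\lambda u$. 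This simultaneously produces the bounds $\|D_\lambda u_n\|_\infty\leq C\|D_\lambda u\|_\infty$ and $\|\partial_x D_\lambda u_n\|_\infty\leq C\|\partial_x D_\lambda u\|_\infty$, with $C$ depending only on the overlap constant, and, since the remainders are $o(\delta_n)$ against $O(\delta_n^{-1})$ (resp.\ $o(\delta_n^2)$ against $O(\delta_n^{-2})$), the pointwise convergence $D_\lambda u_n\to D_\lambda u$ and $\partial_x D_\lambda u_n\to\partial_x D_\lambda u$ on $\Kc^k_N$.

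The main obstacle is this second-order estimate: each $D^2\rho_i^n$ blows up like $\delta_n^{-2}$, and it is only the reproduction of quadratics by the partition of unity that creates enough cancellation to leave a remainder controlled by $\|\partial_x D_\lambda u\|_\infty$ alone, uniformly in $n$ and with a constant independent of $k$ and $N$; the scheme thus hinges on exhibiting a partition of unity meeting all these structural requirements at once. A secondary nuisance is keeping the nodes $x_i^n$ inside $K_N$ while retaining quasi-interpolation near $\partial K_N$, so that $u$ is only ever evaluated on $\Kc^k_N$; this can be handled either by first extending $u$ to $\Kc^k_{N+1}$ with controlled norms or by absorbing the small boundary defect into the error terms.
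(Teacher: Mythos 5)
The overall architecture of your construction --- composing $u$ with the finite--dimensional discretisation $\lambda\mapsto\nu^n_\lambda=\sum_i\langle\rho_i^n,\lambda\rangle\delta_{x_i^n}$, reading the derivatives off \eqref{eq:derivative_cyl_fct}, and recovering $D_\lambda u$ and $\partial_x D_\lambda u$ by Taylor expansion at the nodes plus cancellation identities --- is sensible and close in spirit to the construction the paper actually invokes (it adapts Lemma~3.4 of \citet{martini2023kolmogorov}, with the Stone--Weierstrass route of \citet{guo2023ito} mentioned as an alternative). However, the object your second-order step hinges on does not exist: there is no nonnegative partition of unity $\{\rho^n_i\}$ with finitely many nodes $x^n_i$ satisfying $\sum_i p(x^n_i)\rho^n_i(y)=p(y)$ for all polynomials $p$ of degree at most two and all $y$ in a neighbourhood of $K_N$. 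Indeed, for fixed $y$ the weights $\rho^n_i(y)\geq 0$ sum to one, so $\nu_y:=\sum_i\rho^n_i(y)\delta_{x^n_i}$ is a probability measure; affine reproduction gives it mean $y$, and reproduction of $|z|^2$ gives $\int |z-y|^2\,\nu_y(dz)=0$, forcing $\nu_y=\delta_y$, which is impossible for uncountably many $y$ and finitely many nodes. In particular the ``classical'' example you cite is wrong: the Schoenberg operator built from tensor B-splines with Greville abscissae reproduces only affine functions exactly, and quasi-interpolants that do reproduce quadratics use coefficient functionals that are neither point evaluations nor positive. Nor can you simply drop nonnegativity: it is exactly what keeps $\nu^n_\lambda$ a positive measure in $\Kc^k_N$ (so that $u(\nu^n_\lambda)$ is defined at all) and what yields $\|u_n\|_\infty\leq\|u\|_\infty$ and $\|\delta_\lambda u_n\|_\infty\leq\|\delta_\lambda u\|_\infty$.

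The gap is plausibly repairable, but with a weaker property than the one you assumed: since constants and affine functions already kill the zeroth- and first-order terms, it suffices that the quadratic-reproduction defect $y\mapsto\sum_i q(x^n_i)\rho^n_i(y)-q(y)$ have vanishing (or $o(1)$) second derivatives at the evaluation point. This holds, for instance, for cardinal B-splines of degree at least three (you need $C^2$ basis functions for membership in $\Dc^T$) on a uniform grid, where the defect is locally constant of order $\delta_n^2$ in the interior --- but it fails near the boundary knots, so the grid must extend beyond $K_N$, which in turn requires extending $u$ to $\Kc^k_{N+1}$ with controlled norms (e.g.\ by pushing $\lambda$ forward under a smooth retraction onto $K_N$). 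None of this is in your write-up, and as stated the claim that ``the second-order term reproduces $\partial_x D_\lambda u$'' rests on a nonexistent partition of unity, so the proof is incomplete at its decisive step.
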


Since $\Kc^k_N$ is compact in $\Mc\l(\R^d\r)$, the previous lemma proves 
\beqs
\|u_n - u\|_{C^{2,2}_b\l(\Mc\l(\R^d\r)\r)}\to 0, \quad \text{as } n\to \infty,
\enqs
where
\begin{equation}\label{Superprocnorm_C22b}
    \begin{split}
      \|u\|_{C^{2,2}_b\l(\Mc\l(\R^d\r)\r)} := 
      \sup_{\lambda\in \Kc^k_N,x,y\in K_N}\Big\{
      |u(\lambda)| + |\delta_\lambda u(\lambda,x)| + 
      |\delta_\lambda^2 u(\lambda,x,y)| \qquad \quad
      \\
      + |D_\lambda u(\lambda,x)|
      + |\partial_x D_\lambda u(\lambda,x)|\Big\},
    \end{split}
  \end{equation}
for $u$ in $C^{2,2}\l(\Mc\l(\R^d\r)\r)$ which is bounded with bounded derivatives. A stronger norm could be used by adding in the supremum also the terms depending on $\partial_x \delta_\lambda^2 u$ and $D_\lambda^2u$  as in \citet{martini2023kolmogorov}. 
For our scope, norm \eqref{Superprocnorm_C22b} is enough to generalize the martingale problem \eqref{SuperprocMartPb:diff-F_f-superprocess}.

\begin{Remark}
    A different approach could have been taken to prove that $\Dc^T$ is dense in $C^{2,2}\l(\Kc^k_N\r)$. As proven in \citet{guo2023ito}, this domain separates points in $\Kc^k_N$ and vanishes at no point, therefore using Stone-Weierstrass theorem, it is dense in the $C^{0}\l(\Kc^k_N\r)$ with the topology of the strong convergence. Then, using the definition for the Linear derivative and the intrinsic derivative, the convergences of the different derivatives could be established, as in \citet[Lemma 3.12,][]{guo2023ito}.
\end{Remark}

\section{Moment estimates}\label{Appendix:moment-estimate}

We now consider the representation of the controlled superprocesses as Stochastic Differential Equations (SDE), similar to \citet{Ocello:rel_form_branching}. This will help in providing moment estimates for these processes and proving that the control problem \eqref{Section:ctrl_pb} is well posed.

This representation makes use of martingale measures, in extensions of the original space, and lets us apply the general theory of semimartingales in a more general setting. Relevant definitions and results on these objects are concisely summarised in \citet{ElK_Meleard:mart_measure} \citep[see, $e.g.$,][]{Walsh} for a monograph on the subject). We recall briefly their definition.

\begin{Definition}
Let $(G,\Gc)$ be a Lusin space with its $\sigma$-algebra, and $(
\Omega, \Fc, \P,\F =\left\{\Fc_s\right\}_s)$ a filtered space satisfying the usual condition, where we define $\Pc$ the predictable $\sigma$-field. A process $\mathfrak{M}$ on $\l[0,T\r]\times \Omega \times\Gc$ is called  \emph{martingale measure on $G$} if
\begin{enumerate}[(i)]
\item $\mathfrak{M}_0\l(\Ec\r)=0$ a.s. for any $\Ec\in\Gc$;
\item $\mathfrak{M}_t$ is a $\sigma$-finite, $L^2\l(\Omega\r)$-valued measure for all $t\in \l[0,T\r]$;
\item $\left(\mathfrak{M}_t\l(\Ec\r)\right)_{t\in\l[0,T\r]}$ is an $\F$-martingale for any $\Ec\in\Gc$.
\end{enumerate}
We say that $\mathfrak{M}$ is \emph{orthogonal} if the product $\mathfrak{M}_t\l(\Ec\r)\mathfrak{M}_t\l(\Ec'\r)$ is a martingale for any two disjoint sets $\Ec,\Ec'\in \Gc$. We also say, on one hand, that it is \emph{continuous} if $\left(\mathfrak{M}_t\l(\Ec\r)\right)_{t\geq0}$ is continuous, \emph{purely discontinuous}, on the other hand, if $\left(\mathfrak{M}_t\l(\Ec\r)\right)_{t\geq0}$ is a purely discontinuous martingale for any $\Ec\in \Gc$.

Finally, if $\mathfrak{M}$ is continuous, we call \emph{intensity} the $\sigma$-finite positive measure $\nu\l(ds, de\r)$ on $\l[0,T\r]\times \Omega \times\Gc$, $\F$-predictable, such that for each $\Ec\in\Gc$ the process $\l(\nu\l(\l(0,t\r]\times \Ec \r)\r)_{t\geq0}$ is predictable, and corresponds to the quadratic variation of $\left(\mathfrak{M}_t\l(\Ec\r)\right)_{t\geq0}$, for any $\Ec\in\Gc$ and $t\in\l[0,T\r]$.

\end{Definition}

\begin{Proposition}\label{SuperprocProp:representation}
Let $\left(\P, \left(\alpha_s\right)_s \right)\in\Rc_{\l(t,\lambda\r)}$. There exists an extension $\Big( \hat\Omega=\Db^d\times\tilde\Omega, \hat\Fc=\Fc^\mu_T\otimes\tilde\Fc, \hat\P=\P\otimes\tilde\P,\left\{\hat\Fc_s=\Fc^\mu_s\otimes\tilde\Fc_s\right\}_s\Big)$ of $\left(\Db^d, \Fc^\mu_T, \P,\F^\mu\right)$, where we naturally extend $\mu$ and $\alpha$, that satisfies the following properties.
\begin{enumerate}
    \item $(\hat\Omega,\hat\Fc,\hat\F,\hat\P)$ is a filtered probability space supporting a continuous $\hat\F$-martingale measures $\mathfrak{M}$ on $\l[0,T\r]\times \hat\Omega \times \Bc\l(\R^d\r)$, with intensity measure $\mu_r(dx)dr$.
    \item $\hat\P\circ X_t^{-1}=\lambda$.
    \item We have that
    \begin{align}\label{SuperprocSDE:weak}
        \begin{split}
            \langle f,\mu_s\rangle = \langle f,\mu_t\rangle &+ \int_t^s\int_{\R^d} L f \l(x,\mu_r,\alpha_r(x)\r)\mu_r(dx)dr\\
            &+ \int_t^s\int_{\R^d} \sqrt{\gamma\l(x,\mu_r,\alpha_r(x)\r)} f(x) \mathfrak{M}(dx,dr)~.
        \end{split}
    \end{align}
    for all $f\in C^\infty_b\l(\R^d\r)$ and all $s\in[t,T]$.
\end{enumerate}
\end{Proposition}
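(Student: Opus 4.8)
The plan is to recognize \eqref{SuperprocSDE:weak} as the stochastic-integral form of the martingale problem defining $\Rc_{\l(t,\lambda\r)}$: first extract a family of real continuous martingales from Definition \ref{SuperprocDef:Superproc}, then realize them as integrals against an orthogonal martingale measure with intensity $\gamma\l(x,\mu_r,\alpha_r(x)\r)\mu_r(dx)dr$, and finally strip off the (possibly degenerate) weight $\gamma$ by enlarging the space with an independent noise so as to obtain a martingale measure $\mathfrak{M}$ with intensity $\mu_r(dx)dr$.

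First, for $\varphi\in C^2_b\l(\R^d\r)$ I would set $N^\varphi_s:=\langle\varphi,\mu_s\rangle-\langle\varphi,\mu_t\rangle-\int_t^s\int_{\R^d}L\varphi\l(x,\mu_r,\alpha_r(x)\r)\mu_r(dx)dr$ and apply the martingale property \eqref{SuperprocMartPb:diff-F_f-superprocess}--\eqref{SuperprocMartPb:diff-F_f-quad_var-superprocess} to $F_\varphi$ with $F\in C^2_b\l(\R\r)$ chosen to agree with the identity on a large interval and to be constant near $\pm\infty$; localizing along $\tau_R:=\inf\{s\geq t:\langle1,\mu_s\rangle>R\}\wedge T$ (and using $|\langle\varphi,\mu_r\rangle|\leq\|\varphi\|_\infty\langle1,\mu_r\rangle$), this makes $N^\varphi_{\cdot\wedge\tau_R}$ a $(\P,\F)$-martingale with $[N^\varphi]_s=\int_t^s\int_{\R^d}\gamma\l(x,\mu_r,\alpha_r(x)\r)\varphi^2(x)\mu_r(dx)dr$. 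This quadratic variation being absolutely continuous in $s$, the martingale $M^{F_\varphi}$ --- hence $\langle\varphi,\mu_\cdot\rangle$, and, $\varphi$ ranging over the separating class $C^2_b\l(\R^d\r)$, the path $\mu_\cdot$ --- is continuous, so $\langle1,\mu_\cdot\rangle$ is bounded on $[t,T]$ and $\tau_R=T$ for $R$ large, $\P$-a.s. Taking $\varphi=1$ shows $\langle1,\mu_{\cdot\wedge\tau_R}\rangle$ is a nonnegative continuous martingale, whence $\E^\P[\langle1,\mu_r\rangle]\leq\langle1,\lambda\rangle$ by Fatou and $\E^\P\big[\int_t^T\langle1,\mu_r\rangle dr\big]<\infty$; this controls $\E^\P\big[[N^\varphi]_T\big]$, so each $N^\varphi$ is a genuine $L^2$-martingale, and by linearity of $\varphi\mapsto N^\varphi$ and polarization $\langle N^\varphi,N^\psi\rangle_s=\int_t^s\int_{\R^d}\gamma\l(x,\mu_r,\alpha_r(x)\r)\varphi(x)\psi(x)\mu_r(dx)dr$.

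Next, I would observe that this covariance functional is worthy and concentrated on the diagonal, with dominating measure $\gamma\l(x,\mu_r,\alpha_r(x)\r)\mu_r(dx)dr$; after extending the covariance identity from $C^2_b$ test functions to bounded Borel ones by a monotone-class argument, the standard correspondence between families of martingales of this type and orthogonal martingale measures \citep[see][]{Walsh,ElK_Meleard:mart_measure} yields an orthogonal continuous $\F$-martingale measure $\mathfrak{N}$ on $\l[t,T\r]\times\R^d$ with intensity $\gamma\l(x,\mu_r,\alpha_r(x)\r)\mu_r(dx)dr$ and $\int_t^s\int_{\R^d}\varphi(x)\,\mathfrak{N}(dx,dr)=N^\varphi_s$. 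Since $\gamma$ is only bounded, continuous and nonnegative, I cannot divide by $\sqrt\gamma$, so I would pass to the extension $\hat\Omega=\Db^d\times\tilde\Omega$, $\hat\P=\P\otimes\tilde\P$, $\hat\Fc_s=\Fc^\mu_s\otimes\tilde\Fc_s$ of the statement, with $\tilde\Omega$ carrying an orthogonal continuous martingale measure $\tilde{\mathfrak{M}}^0$ of intensity $\mu_r(dx)dr$ independent of $\mathfrak{N}$ (a standard construction; $\mu$, $\alpha$, $\mathfrak{N}$ are extended trivially and keep their properties for $\hat\F$), and define
\[
\mathfrak{M}(dx,dr):=\mathds{1}_{\{\gamma(x,\mu_r,\alpha_r(x))>0\}}\,\gamma\l(x,\mu_r,\alpha_r(x)\r)^{-1/2}\,\mathfrak{N}(dx,dr)+\mathds{1}_{\{\gamma(x,\mu_r,\alpha_r(x))=0\}}\,\tilde{\mathfrak{M}}^0(dx,dr).
\]
The integrand $\mathds{1}_{\{\gamma>0\}}\gamma^{-1/2}$ is square-integrable against the intensity $\gamma\mu_r(dx)dr$ of $\mathfrak{N}$ because $\int_t^T\int_{\R^d}\mathds{1}_{\{\gamma>0\}}\mu_r(dx)dr\leq\int_t^T\langle1,\mu_r\rangle dr$ is integrable, so $\mathfrak{M}$ is a well-defined continuous orthogonal $\hat\F$-martingale measure whose intensity is $\mathds{1}_{\{\gamma>0\}}\gamma^{-1}\gamma\,\mu_r(dx)dr+\mathds{1}_{\{\gamma=0\}}\mu_r(dx)dr=\mu_r(dx)dr$; this is item 1, item 2 is the condition $\mu_t=\lambda$ $\hat\P$-a.s. inherited from $\P\in\Rc_{\l(t,\lambda\r)}$, and for $f\in C^\infty_b\l(\R^d\r)\subseteq C^2_b\l(\R^d\r)$ one gets $\int_t^s\int_{\R^d}\sqrt{\gamma\l(x,\mu_r,\alpha_r(x)\r)}f(x)\mathfrak{M}(dx,dr)=\int_t^s\int_{\R^d}\mathds{1}_{\{\gamma>0\}}f(x)\mathfrak{N}(dx,dr)=N^f_s$ (the term against $\tilde{\mathfrak{M}}^0$ vanishes since $\mathds{1}_{\{\gamma=0\}}\sqrt\gamma\equiv0$, and the term $\int\mathds{1}_{\{\gamma=0\}}f\,d\mathfrak{N}$ has zero quadratic variation), which rearranges to \eqref{SuperprocSDE:weak}. (If $\mathfrak{M}$ must be defined on $\l[0,T\r]$, I would adjoin on $\l[0,t\r]$ an independent martingale measure of intensity $\mu_r(dx)dr$.)

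The main obstacle is the last step: since $\gamma$ may vanish one cannot normalize $\mathfrak{N}$ by $\gamma^{-1/2}$, and the enlargement by an independent martingale measure is exactly what supplies the missing noise on $\{\gamma=0\}$; the remaining work (well-posedness of the integral against $\mathfrak{N}$, the orthogonality and intensity bookkeeping for $\mathfrak{M}$, and preservation of martingale properties under the extension) is routine, as is the localization in the first step and the invocation of the classical martingale-measure representation in the second.
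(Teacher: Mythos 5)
Your construction is correct and is essentially the paper's own route: the paper's proof simply invokes the representation theorems for continuous martingale measures and the constructions of \citet{Roelly-Meleard:discont_measure_val_branching} and \citet[Proposition 3.3,][]{Ocello:rel_form_branching}, which consist exactly of the steps you spell out (extracting the $L^2$-martingales $N^\varphi$ from the martingale problem, identifying them with integrals against an orthogonal martingale measure of intensity $\gamma\,\mu_r(dx)dr$, and enlarging the space with independent noise to renormalize by $\sqrt{\gamma}$ on $\{\gamma>0\}$ and supply the missing noise on $\{\gamma=0\}$, yielding $\mathfrak{M}$ with intensity $\mu_r(dx)dr$). Your additional details (continuity of $\mu$ from the continuity of the quadratic variation, localization by $\tau_R$, and the intensity bookkeeping) are sound and merely make explicit what the cited references provide.
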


\begin{proof}
    The representation of these processes is grounded in representation theorems for continuous martingale measures. We follow \citet{Roelly-Meleard:discont_measure_val_branching} and \citet[Proposition 3.3,][]{Ocello:rel_form_branching} applying their construction here.
\end{proof}

We can now prove the non-explosion of these processes, which will imply the well-posedness of the optimization problem.
\begin{Proposition}\label{SuperprocProp:moment_bounds:mass}
Fix $\l(t,\lambda\r) \in \l[0,T\r]\times \Mc\l(\R^d\r)$ and $p\in[1,2]$. There exists a constant $C\geq0$, depending only on $T,$ and the coefficient of the parameters, such that
\beq
\label{Superproceq:moment_bound:mass}
    \E^\P\left[\sup_{r\in [t,T]}\d_{\R^d}(\mu_r,\0)^p\right] &\leq& C\d_{\R^d}(\lambda,\0)^p,
\enq
for any $\left(\P, \left(\alpha_s\right)_s \right)\in\Rc_{\l(t,\lambda\r)}$, where $\0$ denotes the measure $0$.
\end{Proposition}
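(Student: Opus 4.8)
The plan is to push everything onto the scalar total--mass process $Z_r:=\langle 1,\mu_r\rangle$, using the semimartingale representation of Proposition~\ref{SuperprocProp:representation}, and then to run a maximal inequality together with a Gr\"onwall argument. The first step is to note that, since $\langle\varphi_k,\0\rangle=0$ for every $k$, one has $\d_{\R^d}(\mu_r,\0)=\sum_k 2^{-k}q_k^{-1}\,|\langle\varphi_k,\mu_r\rangle|$, and because each $\varphi_k\in\mathscr{F}_{\R^d}$ satisfies $\|\varphi_k\|_\infty\le1$ and $q_k\ge1$ this is dominated by $C_0\,Z_r$ with $C_0:=\sum_k 2^{-k}<\infty$; in the other direction, choosing $\varphi_{k_0}=1\in\mathscr{F}_{\R^d}$ (so $q_{k_0}=1$) gives $Z_r\le 2^{k_0}\d_{\R^d}(\mu_r,\0)$. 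Thus $\d_{\R^d}(\mu_r,\0)$ and $Z_r$ are comparable up to fixed constants, and it suffices to bound $\E^{\P}\big[\sup_{r\in[t,T]}Z_r^{p}\big]$ by $C\,Z_t^{p}$.

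Second, I would identify the dynamics of $Z$. Taking $f\equiv 1\in C^\infty_b(\R^d)$ in \eqref{SuperprocSDE:weak}, and using that $L\,1\equiv0$ because $L$ acts only through the derivatives of its argument, the finite--variation part drops out, so $Z_\cdot=Z_t+\int_t^{\,\cdot}\int_{\R^d}\sqrt{\gamma(x,\mu_r,\alpha_r(x))}\,\mathfrak{M}(dx,dr)$ is a nonnegative continuous $(\P,\F)$--martingale started from $Z_t=\langle 1,\lambda\rangle$, with predictable bracket $\langle Z\rangle_s=\int_t^s\int_{\R^d}\gamma(x,\mu_r,\alpha_r(x))\,\mu_r(dx)\,dr\le\|\gamma\|_\infty\int_t^s Z_r\,dr$. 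To keep all expectations finite I would localize along the stopping times $\tau_m:=\inf\{s\ge t:Z_s\ge m\}$, derive the bound uniformly in $m$, and then let $m\to\infty$ by monotone convergence, using that $Z$ is a genuine (hence non--exploding) martingale.

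Third comes the Gr\"onwall step. Writing $Z=Z_t+N$ with $N$ the martingale part and $\phi(s):=\E[\sup_{r\in[t,s\wedge\tau_m]}Z_r^{p}]$, I would apply the Burkholder--Davis--Gundy inequality, $\E[\sup_{[t,s\wedge\tau_m]}|N_r|^{p}]\le c_p\,\E[\langle Z\rangle_{s\wedge\tau_m}^{p/2}]$, together with the bracket bound and Jensen's inequality (for the concave map $x\mapsto x^{p/2}$, $p\le2$), to reach an estimate of the form $\phi(s)\le C\big(Z_t^{p}+\int_t^s\phi(r)\,dr\big)$ with $C$ depending only on $p$, $T$ and $\|\gamma\|_\infty$; Gr\"onwall's lemma then gives $\phi(T)\le Ce^{CT}Z_t^{p}$, and the comparability from the first paragraph yields \eqref{Superproceq:moment_bound:mass}. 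The analogous assertion for the $n$--rescaled processes is obtained by exactly the same argument, since $L\,1\equiv0$ again makes $\langle 1,\mu_\cdot\rangle$ a $\P^{t,\lambda_n,\alpha;n}$--martingale whose bracket reduces to $\int_t^s\int_{\R^d}\gamma(x,\mu_r,\alpha_r(x))\,\mu_r(dx)\,dr$.

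\textbf{Main obstacle.} The delicate point is closing the Gr\"onwall inequality so that the right--hand side is homogeneous of degree $p$ in the initial total mass $Z_t$: the bracket $\langle Z\rangle$ is only \emph{linear} in $Z$, so an unstructured use of Burkholder--Davis--Gundy produces a term of order $Z_t^{p/2}$ (or an additive constant), and one must instead estimate the stochastic term against $\int_t^{\,\cdot}Z_r\,dr$ and repeatedly reinject the martingale identity $\E[Z_r]=Z_t$ so that the Gr\"onwall loop both closes and carries the correct scaling. The remaining ingredients --- the reduction to $Z$, the recognition of $Z$ as a drift--free martingale, and the localization --- are routine once Proposition~\ref{SuperprocProp:representation} is available.
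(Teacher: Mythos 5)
Your reduction to the total mass $Z_r=\langle 1,\mu_r\rangle$ is correct and is in fact a streamlined version of the paper's route: the paper applies the martingale-measure representation \eqref{SuperprocSDE:weak} to every $\varphi_k\in\mathscr{F}_{\R^d}$ separately, runs Young/BDG/Jensen for each $k$, multiplies by $(2^kq_k)^{-p}$, sums the series (using that $1\in\mathscr{F}_{\R^d}$), and concludes with Gr\"onwall and Fatou, whereas your two-sided comparison $2^{-k_0}Z_r\le \d_{\R^d}(\mu_r,\0)\le \big(\sum_k 2^{-k}\big)Z_r$ collapses all of this onto one scalar process. The identification of $Z$ as a drift-free nonnegative martingale (since $L\,1\equiv 0$), the bracket bound $\langle Z\rangle_s\le\|\gamma\|_\infty\int_t^s Z_r\,dr$, and the localization are all fine.

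The genuine gap is the step you yourself label the ``main obstacle'': you never close it, and it cannot be closed as sketched. BDG, Jensen and the martingale identity give $\E^\P\big[\sup_{r\le T}|N_r|^p\big]\le c_p\,\E^\P\big[\big(\bar\gamma\int_t^T Z_r\,dr\big)^{p/2}\big]\le c_p\big(\bar\gamma (T-t) Z_t\big)^{p/2}$, a term of order $Z_t^{p/2}$, and no amount of ``reinjecting $\E[Z_r]=Z_t$'' upgrades $Z_t^{p/2}$ to $Z_t^{p}$. Worse, the target inequality in its homogeneous form is unreachable by any argument when $p>1$ and $\gamma$ is bounded below (as in Assumption \ref{SuperprocAssumption:smooth_sol_HJB}): for $p=2$ the exact identity $\E^\P[Z_s^2]=Z_t^2+\E^\P\big[\int_t^s\int_{\R^d}\gamma\l(x,\mu_r,\alpha_r(x)\r)\mu_r(dx)dr\big]\ge Z_t^2+c\,(s-t)Z_t$ shows that a bound $CZ_t^2$ with $C$ independent of $\lambda$ fails as $Z_t\to0$; what your scheme (and, indeed, the paper's) actually yields is an inhomogeneous estimate of the form $C\big(\d_{\R^d}(\lambda,\0)+\d_{\R^d}(\lambda,\0)^p\big)$. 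Note that the paper's own proof is loose at exactly the same spot: its passage from the BDG term $\big(\int_t^T\langle1,\mu_r\rangle dr\big)^{p/2}$ to $\int_t^T\langle1,\mu_r\rangle^p dr$ is not degree-$p$ homogeneous, and its final Gr\"onwall display carries $\d_{\R^d}(\lambda,\0)$ to the first power while the statement has the $p$-th power. So you have correctly located the real difficulty, but flagging it and gesturing at a reinjection trick is not a proof; as submitted, the argument is incomplete at the one step that matters, and to finish you must either prove (and use) the weaker inhomogeneous bound --- which still gives finiteness of $J$ and well-posedness of the value function --- or modify the statement/hypotheses accordingly.
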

\begin{proof}
    Fix $\left(\P, \left(\alpha_s\right)_s \right)\in\Rc_{\l(t,\lambda\r)}$. We recall that
    $$
        \d_{\R^d}(\mu_r,\0) = \sum_{\varphi_k\in\mathscr{F}_{\R^d}}\frac{1}{2^k q_k}\left|\langle \varphi_k,\mu_r\rangle\right|,
    $$
    for any $r\in[t,T]$. We define the stopping times $\tau_N$ as 
    \beqs
    \tau_N = \inf \left\{ u\geq t : 
    \langle 1,\mu_r\rangle \geq N
    \right\},
    \enqs
    and denote $\mu^{N}_s:=\mu_{\tau_N\wedge s}$, for $N\geq1$. Proposition \ref{SuperprocProp:representation} implies that there exists an extension of $\Omega$ where $\mu$ can be satisfies
    \eqref{SuperprocSDE:weak} on the stochastic interval $[t, \tau_N]$. Such SDE is driven by $\mathfrak{M}^{N}$, a orthogonal continuous martingale measure in $\l[0,T\r]\times\R^d$, with the intensity measure $\mu_s(dx)\1_{s\leq \tau_N}ds$. Applying \eqref{SuperprocSDE:weak} to $\varphi_k$, we have 
    \begin{align*}
        \langle \varphi_k,\mu_s^N\rangle = \langle \varphi_k,\lambda\rangle &+\int_t^s\int_{\R^d} L \varphi_k \l(x,\mu_r,\alpha_r(x)\r)\mu_r(dx)\1_{r\leq \tau_N}dr\\
        &+ \int_t^s\int_{\R^d}
            \sqrt{\gamma\l(x,\mu_r,\alpha_r(x)\r)} \varphi_k(x) \mathfrak{M}^{N}(dx,dr).
    \end{align*}
    for $s\geq t$, and $k\in\N$.
    Applying Young's inequality, there is a constant $C$ (which may change from line to line) such that
    \begin{align*}
        &\E^\P\left[\sup_{s\in [t,T]}|\langle \varphi_k,\mu_s^N\rangle|^p\right]
        \\
        &\leq C|\langle \varphi_k,\lambda\rangle|^p 
        +
        C\E^\P\Bigg[\sup_{s\in [t,T]}\Bigg|
        \int_t^s\int_{\R^d} L \varphi_k \l(x,\mu_r,\alpha_r(x)\r)\mu_r(dx)\1_{r\leq \tau_N}dr
        \Bigg|^p\Bigg]
        \\
        &\quad
        +C\E^\P\left[\sup_{s\in [t,T]}\left|\int_t^s\int_{\R^d}
        \sqrt{\gamma\l(x,\mu_r,\alpha_r(x)\r)}\varphi_k(x)\mathfrak{M}^{N}(dx,dr)\right|^p\right].
    \end{align*}
    Recalling $q_k=\max\{1,\|D \varphi_k\|_\infty, \|D^2\varphi_k\|_\infty\}$, we have
    \begin{align*}
        &\E^\P\left[\sup_{s\in [t,T]}|\langle \varphi_k,\mu_s^N\rangle|^p\right]
        \\
        &\leq C|\langle \varphi_k,\lambda\rangle|^p + C q_k^p\E^\P\left[ \sup_{s\in [t,T]}\left(\int_t^s|\langle \varphi_k,\mu_r^N\rangle| dr\right)^p
        \right]
        \\
        &\quad+C q_k^p\E^\P\left[ \int_t^T\langle 1, \mu_r^N\rangle^p dr \right]
        \\
        &\quad+
        C\E^\P\left[\sup_{s\in [t,T]}\left|\int_t^s\int_{\R^d}
        \sqrt{\gamma\l(x,\mu_r,\alpha_r(x)\r)}\varphi_k(x)\mathfrak{M}^{N}(dx,dr)\right|^p\right]
        .
    \end{align*}
    From Jensen's and Burkholder-Davis-Gundy's inequalities \citep[see, $e.g.$, Chapter VII, Theorem 92,][]{Dellacherie:Meyer:B}, and recalling that $\|\varphi_k\|_\infty\leq 1$, we get
    \begin{align*}
        &\E^\P\left[\sup_{s\in [t,T]}|\langle \varphi_k,\mu_s^N\rangle|^p\right]\\
        &\leq C|\langle \varphi_k,\lambda\rangle|^p +
        C q_k^p\E^\P\left[ \int_t^T|\langle \varphi_k, \mu_r^N\rangle|^p dr \right] + 
        C q_k^p\E^\P\left[ \int_t^T\langle 1, \mu_r^N\rangle^p dr \right]\\
        &\leq C q_k^p|\langle \varphi_k,\lambda\rangle|^p +
        C q_k^p\E^\P\left[ \int_t^T\sup_{s\in [t,u]}|\langle \varphi_k, X_s^N\rangle|^p dr \right] \\
        &\phantom{\leq C q_k^p|\langle \varphi_k,\lambda\rangle|^p~}
        +
        C q_k^p\E^\P\left[ \int_t^T\sup_{s\in [t,u]}\langle 1, X_s^N\rangle^p dr \right].
    \end{align*}
    Finally, multiplying by $\left(\frac{1}{2^k q_k}\right)^p$, summing over $k\in\N$ and applying the monotone convergence theorem, in addition to the fact that function equal to $1$ is in $\mathscr{F}_{\R^d}$, we have
    \beqs
        \E^\P\left[\sup_{r\in [t,T]}\d_{\R^d}(X^N_r,\0)^p\right] &\leq&C \d_{\R^d}(\lambda,\0) +
        C \E^\P\left[ \int_t^T\sup_{s\in [t,u]}\d_{\R^d}(X^N_s,\0)^p dr \right].
    \enqs
    Using Gronwall's lemma, we conclude that $\E^\P\left[\sup_{r\in [t,T]}\d_{\R^d}(X^N_r,\0)^p\right] \leq C\d_{\R^d}(\lambda,\0)^p$ for any $N\geq 1$. Applying Fatou's lemma, we obtain \eqref{Superproceq:moment_bound:mass}.
\end{proof}

\section{Dynamic programming principle}\label{Appendix:DPP}

In this section, we provide the DPP for the control problem \eqref{eq:cost_fct}-\eqref{eq:value_fct}, following the approach outlined in \citet{ElK:Tan:Capacities1,ElK:Tan:Capacities2}. This formulation relies on the so-called \textit{relaxed control} framework, introduced in \citet{ElK:Jeanblanc:exist_optim} and developped in \citet{Haussmann-Existence_Optimal_Controls}, which allows for a more flexible and comprehensive analysis of the control strategies within the given stochastic setting. We do that as in \citet[Section 5,][]{Ocello:rel_form_branching} and interpret the control set as a subset of finite measures satisfying special constraints. 

For a given $\P\in\Pc\l(\Db^d\r)$ and a stopping time $\tau$, we denote $\left(\P_\omega, \omega\in\Db^d\right)$ a regular conditional probability distribution of $\P$ given $\Fc_\tau$ \citep[see, $e.g.$, Chapter 1.1,][]{SV97}.

\subsection{Relaxed formulation}

Consider $\l[0,T\r] \times \R^d \times A$ equipped with the $\sigma$-algebra $\Bc(\l[0,T\r]) \otimes \Bc\l(\R^d\r) \otimes \Bc(A)$. Let $\mathfrak{A}^{\text{Leb}}\subseteq \Mc\l(\l[0,T\r] \times \R^d \times A\r)$ be the set of measures, whose projection on $\l[0,T\r]$ is the Lebesgue measure.
Each $\alpha \in \mathfrak{A}^{\text{Leb}}$ can be identified with its disintegration \citep[see, $e.g.$, Corollary 1.26, Chapter 1,][]{book:KALLENBERG-RM}. In particular, we have $\bar\alpha(ds, dx, da) = ds \y_s(dx) \bar \alpha_s(x,da)$, for a process $\left(\y_s(dx)\right)_s$ (resp. $\left(\bar\alpha_s(x,da)\right)_s$) taking values in the set of functions from $\l[0,T\r]$ (resp. $\l[0,T\r]\times\R^d$) to $\Mc\l(\R^d\r)$ (resp. $\Mc\l(A\r)$).

We denote $\bar\Omega := \Db^d\times\mathfrak{A}^{\text{Leb}}$. On $\bar\Omega$, let $(\mu,\beta)$ be the projection maps (or canonical processes), and $\F^{\mu,\beta}=\left\{\Fc_s^{\mu,\beta}\right\}_s$ the filtration generated by these maps, $i.e.$,
\begin{align*}
    &\Fc_s^{\mu,\beta} = \sigma\left(\mu_s(B), \beta([0,r]\times B'\times C),\phantom{\R^d}\right.
    \\
    &\qquad\qquad\qquad\qquad
    \left.
    \text{ for }s,r\in\l[0,T\r],B,B'\in\Bc\l(\R^d\r), C\in\Bc(A)\right).
\end{align*}
Moreover, define the following map
\beqs
\pi_\Ac:\Db^d\times \Ac &\to& \mathfrak{A}^{\text{Leb}}\\
(\x,\alpha)&\mapsto& ds\x_s(dx)\delta_{\alpha_s(x)}(da).
\enqs
By using controls expressed through $\pi_\Ac$, we obtain a representation of the class of controlled superprocesses, which are defined in a weak sense, within the framework of relaxed controls. This approach allows us to embed the controlled superprocesses into a broader class, ensuring that the original dynamics are preserved while providing additional flexibility in the analysis.

\begin{Definition}
    Fix $\l(t,\lambda\r) \in\l[0,T\r]\times \Nc\l[\R^d\r]$. We say that $\P\in \Pc(\bar\Omega)$ is a \emph{weak control rule}, and we denote $\P\in\Cc_{\l(t,\lambda\r)}$, if $\P(\mu_t=\lambda)=1$, there exists $\alpha^\P\in\Ac$ such that $\P\left(\pi_\Ac\left(\mu,\alpha^\P\right)= \beta\right)=1$, and the process
    \beqs
    M^{F_\varphi}_s = F_\varphi\l(\mu_s\r) - \int_t^s \int_{\R^d\times A} \Lc F_\varphi (x,\mu_r,a)\beta_s(x,da)\mu_r(dx)dr
    \enqs
    is a $\left(\P,\F^{\mu,\beta}\right)$-martingale with quadratic variation
    \beqs
    \left[M^{F_\varphi}\right]_s = \int_t^s  \left(F'_\varphi(\mu_r)\right)^2\int_{\R^d\times A}\gamma(x,\mu_r,a)\beta_s(x,da)\varphi^2(x)\mu_r(dx)dr,
    \enqs
    for any $F\in C^{2}_b\l(\R\r)$, $\varphi\in C^2_{b}\l(\R^d\r)$, and $s\geq t$.
\end{Definition}

It is clear that each element of $\Cc_{\l(t,\lambda\r)}$ can be identified to an element of $\Rc_{\l(t,\lambda\r)}$, and viceversa. With abuse of notation, we write $J(t,\lambda,\P)$ for $\P\in \Cc_{\l(t,\lambda\r)}$ to denote $J(t,\lambda,\alpha^\P)$. With this description, we have
\beqs
v\l(t,\lambda\r) =\inf_{\alpha\in\Ac}J(t,\lambda,\alpha)= \inf_{\P\in\Cc_{\l(t,\lambda\r)}}J(t,\lambda,\P).
\enqs

In this framework, we can consider the notion of conditioning as well as concatenation on $\bar\Omega$. For $(t,\bar w)\in \l[0,T\r]\times \bar\Omega$, we denote
\beqs
    \mathfrak{P}^t_{\bar w} &:=& \left\{\bar \omega: \mu_t(\bar \omega)=\mu_t(\bar w)\right\},\\
    \mathfrak{P}_{t,\bar w} &:=&
    \l\{
        \bar \omega: \left(\mu_s, \mathfrak{M}_s(\phi)\right)(\bar \omega)= \left(\mu_s, \mathfrak{M}_s(\phi)\right)(\bar w),
        \phantom{\R^d}
        \r.
        \\
        &&\qquad\qquad\qquad\qquad\l.
    \text{ for }s\in \l[0,T\r], \phi\in C_b\left(\l[0,T\r]\times \R^d\times A \right)\r\},
\enqs
where
\beqs
\mathfrak{M}_s(\phi):= \int_0^s\int_{\R^d\times A} \phi(s,x,a)\beta(ds,dx,da).
\enqs
Then, for all $\bar \omega\in \mathfrak{P}^t_{\bar w}$, we define the concatenated path 
$\bar w \otimes_t\bar \omega$ by
\beqs
\left(\mu_s, \mathfrak{M}_s(\phi)\right)(\bar w \otimes_t\bar \omega) = \begin{cases}
    \left(\mu_s, \mathfrak{M}_s(\phi)\right)(\bar w), & \text{ for } s\in [0,t),\\
    \left(\mu_s, \mathfrak{M}_s(\phi) - \mathfrak{M}_t(\phi)\right)(\bar \omega)&
    \\
    \qquad\qquad+ \left(\mu_s,\mathfrak{M}_t(\phi)\right)(\bar w), & \text{ for } s\in [t,T],
\end{cases}
\enqs
for all $\phi\in C_b\left(\l[0,T\r]\times \R^d\times A \right)$.

Fix $\P\in \Pc(\bar \Omega)$, and $\tau$ a $\F^{\mu,\beta}$-stopping time. From \citet[Proposition 1.9, Chapter 1,][]{Yong-Zhou-StochasticControls},
there is a family of regular conditional probability distribution (r.c.p.d.) $\left(\P_{\bar\omega}\right)_{\bar\omega\in\bar\Omega}$ w.r.t. $\Fc^{\mu,\beta}_\tau$ such that the $\Fc^{\mu,\beta}_\tau$-measurable probability kernel $\left(\P_{\bar\omega}\right)_{\bar\omega\in\bar\Omega}$ satisfies 
\beqs
\P_{\bar\omega}\left(\mathfrak{P}_{\tau(\bar\omega),\bar\omega}\right)=1\quad \text{ for }\P-\text{a.e. }\bar\omega\in \bar\Omega.
\enqs
On the other hand, take a probability measure $\P$ defined on $\left(\bar \Omega, \Fc^{\mu,\beta}_\tau\right)$ and a family of probability measures $\left(\Q_{\bar\omega}\right)_{\bar\omega\in\bar\Omega}$ such that $\bar\omega\mapsto \Q_{\bar\omega}$ is $\Fc^{\mu,\beta}_\tau$-measurable and 
\beqs
\Q_{\bar\omega}\left(\mathfrak{P}^{\tau(\bar\omega)}_{\bar\omega}\right)=1 \quad \text{ for }\P-\text{a.e. }\bar\omega\in \bar\Omega.
\enqs
There is a unique concatenated probability measure that we denote $\P\otimes_\tau\Q_\cdot$ defined by
\beqs
\P\otimes_\tau\Q_\cdot(C)&:=&\int_{\bar\Omega} \P(d\bar w)
    \int_{\bar\Omega}\1_C\left(\bar w \otimes_{\tau(\bar w)}\bar \omega\right)\Q_{\bar w}(d\bar \omega),
 \quad \text{ for }C\in \Fc^{\mu,\beta}_T.
\enqs

\subsection{Measurable selection and DPP}

This weak formulation has the advantage of simplifying the proof of the DPP. We follow the path detailed in \citet{ElK:Tan:Capacities1} and \citet{ElK:Tan:Capacities2}, which clarify \citet[Chapter 7,][]{bertsekas1996stochastic} in the context of stochastic control theory, generalizing it to our setting. In particular, to reach the DPP, as in \citet[Theorem 4.10,][]{ElK:Tan:Capacities1} and \citet[Theorem 3.1,][]{ElK:Tan:Capacities2}, we need to show that the setting so far presented satisfies \citet[Assumption 2.2,][]{ElK:Tan:Capacities2}, which in our setting reads as follows.

It is clear that the \textit{full} generator $\G$ as defined in \eqref{Superproceq:def_full_generator} is countably generated. Combining Proposition \ref{Prop:existence} with the weak formulation description, $\Cc_{\l(t,\lambda\r)}$ is nonempty for all $\l(t,\lambda\r)\in\l[0,T\r]\times \Mc\l(\R^d\r)$. Fix $\l(t,\lambda\r)\in\l[0,T\r]\times \Mc\l(\R^d\r)$, $\P\in\Cc_{\l(t,\lambda\r)}$ and $\tau$ a $\F^{\mu,\beta}$-stopping time taking value in $[t,T]$. Using \citet[Lemma 3.2,][]{ElK:Tan:Capacities2} and \citet[Lemma 3.3,][]{ElK:Tan:Capacities2}, we obtained
\begin{itemize}
    \item \emph{Stability by conditioning}: There is a family of r.c.p.d. $\left(\P_{\bar\omega}\right)_{\bar\omega\in\bar\Omega}$ w.r.t. $\Fc^{\mu,\beta}_\tau$ such that $\P_{\bar\omega}\in \Cc_{\left(\tau(\bar\omega),\mu(\bar\omega)\right)}$ for $\P$-a.e. $\bar\omega\in\bar\Omega$.
    \item \emph{Stability by concatenation}: Let $\left(\Q_{\bar\omega}\right)_{\bar\omega\in\bar\Omega}$ be a probability kernel from $\Fc^{\mu,\beta}_\tau$ into $\left(\bar\Omega,\Fc^{\mu,\beta}_T\right)$ such that $\bar\omega\mapsto \Q_{\bar\omega}$ is $\Fc^{\mu,\beta}_\tau$-measurable, and $\Q_{\bar\omega}\in \Cc_{\left(\tau(\bar\omega),\mu(\bar\omega)\right)}$ for $\P$-a.e. $\bar\omega\in\bar\Omega$. Then, $\P\otimes_\tau \Q_\cdot\in\Cc_{\l(t,\lambda\r)}$.
\end{itemize}

These two conditions are those of \citet[Assumption 2.2,][]{ElK:Tan:Capacities2}. This allows us to prove the following DPP.

\begin{Theorem}\label{SuperprocTheorem:DPP}
    We have
    \begin{equation}\label{SuperprocDPP}
            \begin{split}
          v\l(t,\lambda\r) = \inf_{\P\in\Cc_{\l(t,\lambda\r)}}\E^{\P}\left[
          \int_t^\tau \int_{\R^d\times A} \psi(x,\mu_s,a)\beta_s(x,da)\mu_s(dx)ds + v(\tau,\mu_\tau)\right]\\
          = \inf_{\alpha\in\Ac}\E^{\P^{t,\lambda,\alpha}}\left[
          \int_t^\tau \int_{\R^d} \psi(x,\mu_s,\alpha_s(x))\mu_s(dx)ds + v(\tau,\mu_\tau)\right],\qquad
        \end{split}
    \end{equation}
    for any $\l(t,\lambda\r) \in \l[0,T\r]\times \Mc\l(\R^d\r)$, and $\tau$ stopping time taking value in $[t,T]$.
\end{Theorem}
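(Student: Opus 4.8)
The plan is to carry out the argument entirely inside the relaxed formulation, using the identity $v(t,\lambda)=\inf_{\P\in\Cc_{(t,\lambda)}}J(t,\lambda,\P)$ together with the observation that, for $\P\in\Cc_{(t,\lambda)}$ with associated $\alpha^\P\in\Ac$ and $\beta=\pi_\Ac(\mu,\alpha^\P)$, one has $\int_{\R^d\times A}\psi(x,\mu_s,a)\beta_s(x,da)\mu_s(dx)=\int_{\R^d}\psi(x,\mu_s,\alpha^\P_s(x))\mu_s(dx)$; this already reduces the two displayed expressions in \eqref{SuperprocDPP} to a single quantity once the bijection $\Cc_{(t,\lambda)}\leftrightarrow\Rc_{(t,\lambda)}$ is invoked. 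The first genuine step is a measurability statement: the graph $\{(t,\lambda,\P):\P\in\Cc_{(t,\lambda)}\}$ is a Borel subset of $[0,T]\times\Mc(\R^d)\times\Pc(\bar\Omega)$, because the martingale and quadratic-variation requirements in the definition of a weak control rule amount to countably many closed conditions on $\P$ (the full generator $\G$ of \eqref{Superproceq:def_full_generator} being countably generated), and $\Cc_{(t,\lambda)}\neq\emptyset$ for all $(t,\lambda)$ by Proposition \ref{Prop:existence}. From this, the Jankov--von Neumann selection theorem gives that $v$ is upper semi-analytic in $(t,\lambda)$ and that, for any $\eps>0$, $\eps$-optimal weak control rules can be selected in a universally measurable way. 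These are exactly the structural ingredients packaged as \citet[Assumption 2.2,][]{ElK:Tan:Capacities2}, already verified in the discussion preceding the statement.

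For the inequality ``$\geq$'' I would fix $\P\in\Cc_{(t,\lambda)}$ and a family of r.c.p.d. $(\P_{\bar\omega})_{\bar\omega}$ with respect to $\Fc^{\mu,\beta}_\tau$; by \emph{stability by conditioning}, $\P_{\bar\omega}\in\Cc_{(\tau(\bar\omega),\mu_\tau(\bar\omega))}$ for $\P$-a.e.\ $\bar\omega$. The additive structure of $J$ together with the tower property yields
\begin{align*}
    J(t,\lambda,\P)=\E^{\P}\left[\int_t^\tau\int_{\R^d\times A}\psi(x,\mu_s,a)\beta_s(x,da)\mu_s(dx)\,ds+J\big(\tau(\bar\omega),\mu_\tau(\bar\omega),\P_{\bar\omega}\big)\right],
\end{align*}
and since $J(\tau(\bar\omega),\mu_\tau(\bar\omega),\P_{\bar\omega})\geq v(\tau(\bar\omega),\mu_\tau(\bar\omega))$, taking the infimum over $\P\in\Cc_{(t,\lambda)}$ delivers the ``$\geq$'' part. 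For ``$\leq$'' I would fix $\eps>0$, use the universally measurable selection to build a kernel $\bar\omega\mapsto\Q_{\bar\omega}$ with $\Q_{\bar\omega}\in\Cc_{(\tau(\bar\omega),\mu_\tau(\bar\omega))}$ and $J(\tau(\bar\omega),\mu_\tau(\bar\omega),\Q_{\bar\omega})\leq v(\tau(\bar\omega),\mu_\tau(\bar\omega))+\eps$, and for each $\P\in\Cc_{(t,\lambda)}$ form the concatenation $\P\otimes_\tau\Q_\cdot$, which lies in $\Cc_{(t,\lambda)}$ by \emph{stability by concatenation}. Then
\begin{align*}
    v(t,\lambda)&\leq J(t,\lambda,\P\otimes_\tau\Q_\cdot)=\E^{\P}\left[\int_t^\tau\int_{\R^d\times A}\psi(x,\mu_s,a)\beta_s(x,da)\mu_s(dx)\,ds+J(\tau,\mu_\tau,\Q_\cdot)\right]\\
    &\leq \E^{\P}\left[\int_t^\tau\int_{\R^d\times A}\psi(x,\mu_s,a)\beta_s(x,da)\mu_s(dx)\,ds+v(\tau,\mu_\tau)\right]+\eps ,
\end{align*}
and letting $\P$ range over $\Cc_{(t,\lambda)}$ and then $\eps\downarrow0$ closes the estimate. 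The equality of the two lines of \eqref{SuperprocDPP} then follows by substituting $\beta=\pi_\Ac(\mu,\alpha^\P)$ and reading the running cost back in $\Ac$-form.

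The main obstacle is not the conditioning/concatenation bookkeeping, which is routine once the analogues of \citet[Lemmas 3.2 and 3.3,][]{ElK:Tan:Capacities2} are in force, but the measurable-selection prerequisites: establishing that $(t,\lambda)\mapsto\Cc_{(t,\lambda)}$ has an analytic (here Borel) graph and that $v$ is upper semi-analytic, so that the $\eps$-optimal kernels exist and are measurable enough to be concatenated, and so that $v(\tau,\mu_\tau)$ is a bona fide integrand. This is where the countable generation of $\G$ in \eqref{Superproceq:def_full_generator}, the weak$^*$ continuity of $b,\sigma,\gamma$, the compactness of $A$, and the moment bound of Proposition \ref{SuperprocProp:moment_bounds:mass} (which makes $J$ finite and well defined and yields the tightness needed to control $\Cc_{(t,\lambda)}$) all enter. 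With these in hand, the remaining work is to transcribe the abstract dynamic programming principle of \citet[Theorem 4.10,][]{ElK:Tan:Capacities1} and \citet[Theorem 3.1,][]{ElK:Tan:Capacities2} into the present notation.
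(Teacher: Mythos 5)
Your proposal is correct and follows essentially the same route as the paper: verify the stability-by-conditioning and stability-by-concatenation conditions of \citet[Assumption 2.2,][]{ElK:Tan:Capacities2}, obtain the inequality ``$\geq$'' via r.c.p.d.\ and the tower property, and the inequality ``$\leq$'' via a measurably selected family of $\eps$-optimal kernels concatenated at $\tau$, with the two lines of \eqref{SuperprocDPP} identified through $\pi_\Ac$. The only cosmetic difference is that you justify the selection step through a Borel-graph/Jankov--von Neumann argument, while the paper invokes \citet[Proposition 2.21,][]{ElK:Tan:Capacities1} directly on the sets $\Cc^{\eps}_{(t',\lambda')}$, which amounts to the same ingredient.
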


\begin{proof}
    Fix $\l(t,\lambda\r) \in \l[0,T\r]\times \Mc\l(\R^d\r)$, and $\tau$ to be a stopping time taking values in $[t, T]$. We have that the cost function \eqref{eq:cost_fct} is continuous, thus a fortiori upper semi-analytic. Following the stability by conditioning, for any $\P\in \Cc_{\l(t,\lambda\r)}$, there is $\left(\P_{\bar\omega}\right)_{\bar\omega\in\bar\Omega}$ a family of r.c.p.d. w.r.t. $\Fc^{\mu,\beta}_\tau$ such that $\P_{\bar\omega}\in \Cc_{\left(\tau(\bar\omega),\mu(\bar\omega)\right)}$ for $\P$-a.e. $\bar\omega\in\bar\Omega$. Therefore, we get 
    \begin{align*}
        &J\left(\tau(\bar\omega),\mu_{\tau(\bar\omega)}(\bar\omega),\P_{\bar\omega}\right)
        \\
        &=
        \E^{\P_{\bar\omega}} \left[
        \int_\tau^T \int_{\R^d\times A} \psi(x,\mu_s,a)\beta_s(x,da)\mu_s(dx)ds + \Psi(\mu_T)\right],
        \\
        &\qquad\qquad\qquad\qquad\qquad\qquad\qquad\qquad\qquad\qquad\quad \text{ for }\P\text{-a.e. }\bar\omega\in\bar\Omega.
    \end{align*}
    Since, by definition, $v(\tau(\bar\omega),\mu_{\tau(\bar\omega)}(\bar\omega))\leq J\left(\tau(\bar\omega),\mu_{\tau(\bar\omega)}(\bar\omega),\P_{\bar\omega}\right)$, it follows from the tower property of conditional expectations that
    \begin{align*}
        J(t,\lambda,\P) &= \int_{\bar \Omega}\left(
            J\l(
                \tau(\bar\omega), \mu_{\tau(\bar\omega)}(\bar\omega), \P_{\bar\omega}
            \r)
        \phantom{\int_t^\tau}
        \right.
        \\
        &
        \qquad\qquad
        \left.
        + \int_t^\tau \int_{\R^d\times A} \psi(x,\mu_s,a)\beta_s(x,da)\mu_s(dx)ds\right)\P(d\bar \omega)
        \\
        &\geq
    \E^{\P}\left[v(\tau,\mu_\tau) +
        \int_t^\tau \int_{\R^d\times A} \psi(x,\mu_s,a)\beta_s(x,da)\mu_s(dx)ds \right],
    \end{align*}
    which, 
    by the arbitrariness of $\P$, provides
    \begin{align*}
        v\l(t,\lambda\r) \geq \displaystyle\inf_{\P\in\Cc_{\l(t,\lambda\r)}}\E^{\P}\left[
        \int_t^\tau \int_{\R^d\times A} \psi(x,\mu_s,a)\beta_s(x,da)\mu_s(dx)ds + v(\tau,\mu_\tau)\right].
    \end{align*}

    We now turn to the reverse inequality. Fix some arbitrary $\P\in\Cc_{\l(t,\lambda\r)}$ and $\eps>0$. Consider the set $\Cc^\eps_{(t',\lambda')}$ defined as follows
    \beqs
    \Cc^\eps_{(t',\lambda')}:= \left\{\Q\in \Cc_{(t',\lambda')}:v(t',\lambda')+\eps \geq J(t',\lambda',\Q)\right\}, \qquad\qquad \qquad\qquad 
    \\
    \text{ for }(t',\lambda')\in \l[0,T\r]\times \Mc\l(\R^d\r).
    \enqs
    From the \citet[Proposition 2.21,][]{ElK:Tan:Capacities1}, there exists a family of probability $\left(\Q^\eps_{\bar\omega}\right)_{\bar\omega\in\bar\Omega}$ from $\Fc^{\mu,\beta}_\tau$ into $\left(\bar\Omega,\Fc^{\mu,\beta}_T\right)$ such that $\bar\omega\mapsto \Q^\eps_{\bar\omega}$ is $\Fc^{\mu,\beta}_\tau$-measurable, and $\Q^\eps_{\bar\omega}\in \Cc^\eps_{\left(\tau(\bar\omega),\mu(\bar\omega)\right)}$ for $\P$-a.e. $\bar\omega\in\bar\Omega$. Then, $\P\otimes_\tau \Q^\eps_\cdot\in\Cc_{\l(t,\lambda\r)}$ by the stability by concatenation condition. This implies that    
    \beqs
    &&J(t,\lambda,\P\otimes_\tau \Q^\eps_\cdot)
    \\
    &&= \E^{\P\otimes_\tau \Q^\eps_\cdot}
    \left[
        \int_t^T \int_{\R^d\times A} \psi(x,\mu_s,a)\beta_s(x,da)\mu_s(dx)ds + \Psi(\mu_T)
        \right]
    \\
    &&= \int
        \Bigg(
            \int_t^{\tau(\bar\omega)} \int_{\R^d\times A} \psi(x,\mu_s,a)\beta_s(x,da)\mu_s(dx)ds \\
    &&\quad \phantom{\int \Bigg( + }
            + \E^{\Q^\eps_{\bar\omega}}
                \left[\int_\tau^T \int_{\R^d\times A} \psi(x,\mu_s,a)\beta_s(x,da)\mu_s(dx)ds + \Psi(\mu_T)\right]
        \Bigg) \P(d\bar\omega)\\
    &&= \E^{\P}\left[
        \int_t^{\tau(\bar\omega)}\int_{\R^d\times A} \psi(x,\mu_s,a)\beta_s(x,da)\mu_s(dx)ds + J\left(\tau(\bar\omega),\mu_{\tau(\bar\omega)}(\bar\omega),\Q^\eps_{\bar\omega}\right)\right]\\
    &&\leq \E^{\P}\left[
        \int_t^{\tau}\int_{\R^d\times A} \psi(x,\mu_s,a)\beta_s(x,da)\mu_s(dx)ds + v\left(\tau,\mu_\tau \right) \right] + \eps.
    \enqs
    From the arbitrariness of $\P\in \Cc_{\l(t,\lambda\r)}$ and $\eps > 0$, we obtain the inequality
    \beqs
    v\l(t,\lambda\r) \leq \inf_{\P\in \Cc_{\l(t,\lambda\r)}}\E^{\P}\left[
        \int_t^\tau \int_{\R^d\times A} \psi(x,\mu_s,a)\beta_s(x,da)\mu_s(dx)ds + v(\tau,\mu_\tau)\right].
    \enqs
\end{proof}

\vspace{2mm}
\noindent \textbf{Acknowledgements.} I gratefully acknowledge my Ph.D. supervisor Idris Kharroubi for supervising this work.

\bibliographystyle{plainnat}
\bibliography{main.bib}



\end{document}